\title{Spectral Statistics of the Sample Covariance Matrix for High Dimensional Linear Gaussians: \\ Working Draft}
\author{\name Muhammad Abdullah Naeem \email muhammad.abdullah.naeem@duke.edu \\
       \addr Department of Electrical and Computer Engineering\\
       Duke University\\
       Durham, NC 27708, USA
       \AND
       \name Miroslav Pajic \email miroslav.pajic@duke.edu \\
       \addr Department of Electrical and Computer Engineering\\
       Duke University\\
       Durham, NC 27708, USA}
\begin{document}
\maketitle

\begin{abstract}%
Performance of ordinary least squares(OLS) method for the \emph{estimation of high dimensional stable state transition matrix} $A$(i.e., spectral radius $\rho(A)<1$) from a single noisy observed trajectory of the linear time invariant(LTI)\footnote{Linear Gaussian (LG) in Markov chain literature} system  $X_{-}:(x_0,x_1, \ldots,x_{N-1})$ satisfying
\begin{equation}
    \label{eq:full-covar}
    \nonumber
    x_{t+1}=Ax_{t}+w_{t}, \hspace{10pt} \text{ where } w_{t} \thicksim N(0,I_{n}),
\end{equation}  
  heavily rely on negative moments of the sample covariance matrix: $(X_{-}X_{-}^{*})=\sum_{i=0}^{N-1}x_{i}x_{i}^{*}$ and singular values of  $EX_{-}^{*}$, where $E$ is a rectangular Gaussian ensemble $E=[w_0, \ldots, w_{N-1}]$. Negative moments requires sharp estimates on all the eigenvalues $\lambda_{1}\big(X_{-}X_{-}^{*}\big) \geq \ldots \geq \lambda_{n}\big(X_{-}X_{-}^{*}\big) \geq 0$. Leveraging upon recent results on spectral theorem for non-Hermitian operators in \cite{naeem2023spectral}, along with concentration of measure phenomenon and perturbation theory(Gershgorins' and Cauchys' interlacing theorem) we show that only when $A=A^{*}$, typical order of $\lambda_{j}\big(X_{-}X_{-}^{*}\big) \in \big[N-n\sqrt{N}, N+n\sqrt{N}\big]$ for all $j \in [n]$. However, in \emph{high dimensions} when $A$ has only one distinct eigenvalue $\lambda$ with geometric multiplicity of one, then as soon as eigenvalue leaves \emph{complex half unit disc}, largest eigenvalue suffers from curse of dimensionality: $\lambda_{1}\big(X_{-}X_{-}^{*}\big)=\Omega\big( \lfloor\frac{N}{n}\rfloor e^{\alpha_{\lambda}n} \big)$, while smallest eigenvalue $\lambda_{n}\big(X_{-}X_{-}^{*}\big) \in (0, N+\sqrt{N}]$. Consequently, OLS estimator incurs a \emph{phase transition} and becomes \emph{transient: increasing iteration only worsens estimation error}, all of this happening when the dynamics are generated from stable systems.

\end{abstract}

\begin{keywords}
  Sample Covariance Matrix, Spectral Theorem for non-Hermitian linear operators, Concentration of measure, High dimensional geometry.
\end{keywords}
\section{Introduction and Preliminaries}
Recent advances in high dimensional statistics and drastic improvement in computational capabilities of machines have gained much attention from controls community. First step in completion of any control task is availability of accurate models, which unfortunately do not always exist. To cope with this issue, one can simulate a trajectory of dynamical system and fit in a regression model. In this paper we focus on learning from a full state observation, but these principles will hold true for the broader category of learning from Markovian data: input-output data(\cite{oymak2021revisiting}), online Kalman filtering(\cite{tsiamis2022online}) and more.

Performance of least squares method for learning system parameters by a single observed trajectory of unknown dynamical system has been extensively studied, see e.g., 
\cite{nagaraj2020least},\cite{sarkar2019finite},\cite{simchowitz2018learning} and references therein. A fundamental limitation of these existing work stems from the fact that sample complexity is in terms of eigenvalues of various Grammians: which is expected value of the sample covariance matrix. Apart from insufficiency of expectation to characterize typical order of the relevant quantity, Grammian eigenvalues hide dependence on length of the simulated trajectory $N$ and dimension of the underlying dynamical systems $n$. These issues severely limit our understanding of regression on dependent data and turns out that we can non-asymptoticlly learn correct system parameters under stability assumption for low dimensional(essentially scalar) dynamical system and in high dimensions when underlying state-transition matrix is Hermitian(which is essentially multiple independent scalar dynamical systems). Throughout this paper we will work under high dimensional framework.

\paragraph{Formalizing the approach, conclusions and main results:}
The biggest obstruction in our current understanding of  regression on dependent data is using eigenvalues of the naive' variant of sample covariance matrix i.e., Grammian matrix instead of the random eigenvalues of the sample covariance matrix to conclude OLS performance. In fact this task is challenging: take for example $N \times n$ rectangular Gaussian ensemble $E$ with its elements i.i.d standard normals. Let $\sigma_{1}(E)\geq \sigma_{2}(E)\geq \ldots \sigma_{n}(E)$ be the singular values of the Gaussian ensemble, even though all the enteries are independent but singular values are highly correlated,\cite{rudelson2014recent}. So it comes as no surprise why no attempt had been made towards investigating spectral statistics $\big[\sigma_{j}^{2}(X_{-})\big]_{j \in [n]}$, where recall that $\sigma_{j}^{2}(X_{-})=\lambda_{j}\big(X_{-}X_{-*}\big)$. Let $[y_{j}]_{j \in [n]}$ be the rows of the data matrix $X_{-}$, then notice that sample covariance matrix is essentially    
\begin{gather*}
X_{-}X_{-}^{*}:=
\begin{bmatrix}
     \langle y_{1},y_{1}\rangle &  \ldots & \langle y_{1},y_{k+1} \rangle  &\ldots& \langle y_1,y_{n}\rangle  \\ \vdots & \ldots & \vdots & \ldots & \vdots \\ \langle y_{k+1},y_{1}\rangle &  \ldots & \langle y_{k+1},y_{k+1} \rangle  &\ldots& \langle y_{k+1},y_{n}\rangle \\
     \vdots & \ldots & \vdots & \ldots & \vdots \\
     \langle y_{n},y_{1}\rangle & \ldots & \langle y_{n},y_{k+1} \rangle & \ldots &  \langle y_{n},y_{n}\rangle.   
\end{bmatrix}
\end{gather*}
In order to get accurate insights into least squares on Markovian data, we need not just accurate estimates of the edge of the spectrum $\lambda_{1}\big(X_{-}X_{-*}\big), \lambda_{n}\big(X_{-}X_{-*}\big)$ but that of the bulk as well.
Now if we new for the \emph{concentration behavior or $\big[\big \langle y_{j},y_{k}\big\rangle \big]_{j,k \in [n]}$} then we could use tools from perturbation theory e.g., Gershgorins' theorem: \emph{all the eigenvalues of $X_{-}X_{-}^{*}$ lies inside disc centered at $\langle y_{j},y_{j} \rangle$ with radius $\sum_{k \neq j}^{n} |\langle y_{j} , y_{k} \rangle| $ for some $j \in [n]$} and Cauchy's interlacing theorem: \emph{smallest eigenvalue is upper bounded by typical size of the smallest row and bulk eigenvalues can be interlaced by the eigenvalues of a lower dimensional sample covariance matrix} to get sharp estimates of spectral statistics of the sample covariance matrix \emph{explicit in $N$ and $n$}. Diagonal entries of the sample covariance matrix $\big\langle y_{j},y_{j}\big\rangle$ are essentially typical size of each row and off-diagonal entries \emph{$\big\langle y_{j},y_{k}\big\rangle_{j \neq k}$ are correlation/interaction between the rows}. Mere assumption of spectral radius strictly inside unit disc is insufficient to characterize these interactions and row variances. As observed recently in the work of \cite{naeem2023spectral}, when state transition is hermitian rows of the data matrix are essentially independent: spatial correlations are minor compared to the typical size of the row. However when state-transition matrix is not Hermitian and distinct eigenvalues of $A$ have \emph{large discrepancy between their algebraic and geometric multiplicity then spatial correlation are strong}. Analytic display of strong spatial correlations appear in off diagonal terms: that can become large, leading to transience of OLS even under control theoretic stability assumption. Therefore, along with stability we need to characterize dynamical system based on spatial dependencies of its generated trajectory. We provide spectral statistics for the two extreme cases with same spectral radius but one spatially independent(Hermitian case) and other spatially inseparable,S-w-SSCs: only one distinct eigenvalue with algebraic and geometric multiplicity having a difference of $n-1$, which includes single Jordan block of size $n$. Combining the results from spectral theorem of non-Hermitian operators, perturbation theory and concentration of measure phenomenon: we manage to get first quantitative handle on various spectral statistics of the sample covariance matrix explicit in dimension of the state space $n$ and trajectory length $N$ as shown in:

\setlength{\tabcolsep}{12pt}
\renewcommand*{\arraystretch}{1.2}
\begin{tabular}{lcl}
\hline
\textbf{Typical Size}&\textbf{S-w-SSCs}&\textbf{Hermitian}\\
\hline
Largest Eigenvalue: $\lambda_{1}\big( X_{-}X_{-}^{*}\big)$&$\Omega\bigg(\bigg\lfloor \frac{N}{n} \bigg\rfloor e^{\alpha_{\lambda}n}\bigg)$& $\mathcal{O}\bigg(N+n\sqrt{N}\bigg)$\\
Smallest Eigenvalue: $\lambda_{n}\big( X_{-}X_{-}^{*}\big)$&$\mathcal{O}\bigg(N+\sqrt{N}\bigg)$& $\mathcal{O}\bigg(N+\sqrt{N}\bigg)$\\
Eigenvalue Spread &$\mathcal{O}\big(e^{\alpha_{\lambda}n}\big)$&$\mathcal{O}\big(n\sqrt{N}\big)$\\
Temporal interaction: $\langle y_j,y_j\rangle$ &same order as spatial&$N \pm \mathcal{O}\sqrt{N} $\\
Spatial interaction: $\langle y_{j}, y_{k}\rangle$&same order as temporal&$\pm \mathcal{O}\sqrt{N}$\\
Largest singular value: $\sigma_{1}(EX_{-}^{*})$&$\Omega\bigg(\sqrt{n\bigg \lfloor \frac{N}{n} \bigg\rfloor e^{\alpha_{\lambda}n}}\bigg)$&$\mathcal{O}\bigg(\sqrt{nN}+n\bigg)$\\
\hline
\end{tabular}

\paragraph{Notations and preliminaries:}
$\rho(A)$,  $\|A\|_2$, $\|A\|_{F}$, $det(A)$, $tr(A)$ and $\sigma(A)$   represent the spectral radius,  matrix 2-norm, Frobenius norm, determinant, trace and set of eigenvalues(spectrum) of $A$, respectively. 
When, subscript under norm is not specified, automatically matrix 2-norm is assumed.
For a positive definite matrix $A$, largest and smallest eigenvaues are denoted by $\lambda_{max}(A)$ and $\lambda_{min}(A)$, respectively. 
Associated with every rectangular matrix $F \in \mathbb{R}^{N \times n }$ are its' singular values $\sigma_{1}(F) \geq \sigma_{2}(F), \ldots, \sigma_{n}(F) \geq 0$, where without loss of generality we assume that $N>n$. A variational characterization of each  singular values $\sigma_{k}(F)$ follows from Courant-Fischer:
\begin{align}
 \nonumber \sigma_{k}(F)&=\max_{V \subset \mathbb{R}^{n}: dim(V)=k}\hspace{5pt} \min_{ x \in V \cap \mathcal{S}^{n-1} }\big\|F x\big\| \\ & \nonumber =
    \min_{V \subset \mathbb{R}^{n}: dim(V)=n-k+1} \hspace{5pt} \max_{ x \in V \cap \mathcal{S}^{n-1}}\big\|F x\big\|
\end{align}
 A function $g: \mathbb{R}^{n} \rightarrow \mathbb{R}^{p}$ is Lipschitz with constant $L$ if for every $x,y \in \mathbb{R}^{n} $, $\|g(x)-g(y)\| \leq L\|x-y\|$.  Notations like $O,\Theta$ and $\Omega$ will be used to highlight the dependence (\emph{non-asymptotically}) w.r.t number of iterations $N$ and dimensionality of the state space $n$. If a statement is only asymptotically true we will highlight it separately.

Space of probability measure on  $\mathcal{X}$(continuous space) is denoted by  $\mathcal{P(\mathcal{X})}$ and space of its Borel subsets is represented by $\mathbb{B}\big(\mathcal{P(\mathcal{X})}\big)$.
On a metric space $(\mathcal{X},d)$, for $\mu, \nu \in \mathcal{P(\mathcal{X})}$, we define Wasserstein metric of order $p \in [1, \infty)$~as
\begin{equation}
\label{eq:WM}
    W_{p}^{d} (\nu,\mu)= \bigg(\inf_{(X,Y) \in \Gamma(\nu,\mu)} \mathbb{E}~d^{p}(X,Y)\bigg)^{\frac{1}{p}};
\end{equation}
here, $\Gamma(\nu,\mu) \in P(\mathcal{X}^{2})$, and $(X,Y) \in \Gamma(\nu,\mu)$ implies that random variables $(X,Y)$ follow some probability distributions on $P(\mathcal{X}^{2})$ with marginals $\nu$ and $\mu$. Another way of comparing two probability distributions on $\mathcal{X}$ is via relative entropy, which is defined as
\begin{equation}
\label{eq:ent} 
    H(\nu|| \mu)=\left\{ \begin{array}{lr}
    \int \log\bigg(\frac{d\nu}{d\mu}\bigg) d\nu, & \text{if}~ \nu << \mu,
         \\ +\infty, & \text{otherwise}. 
         \end{array}\right.
\end{equation}
\begin{definition}
  [Measure concentration and Typical Size]  Let $(x_{i})_{i=1}^{N}$ be centered and bounded size on average  i.e., variance of $O(1)$, then one would expect $S_{N}:=\sum_{i=1}^{N}x_{i}$ to vary in an interval of $O(N)$. However, \emph{under sufficient independence assumption} between each individual components $x_1,x_2, \ldots,x_{N}$, the sum concentrates in a much narrower interval of size $O(\sqrt{N})$ i.e., $P(|S_{N}|\geq \delta \sqrt{N}) = O(\frac{1}{\delta^{2}})$ "and in words we will say $|S_{N}| $ has a typical size of $O(\sqrt{N})$ ", or denote it by $S_{N}\in [-\sqrt{N},\sqrt{N}]$. This is because each individual variable to collectively vary in a way to produce deviation of $O(N)$ becomes more and more less likely with increase in variables and this remarkable phenomenon is called \emph{concentration of measure} 

In fact, trajectory from \emph{one-dimensional stable ARMA} model, with $x_{0}=0$:
\begin{equation}
\label{eq:1dimarma}
    x_{i+1}=\lambda x_{i}+w_{i}, \hspace{3pt} w_{i} \thicksim N(0,1) 
\end{equation}
for some $|\lambda|<1$, $w_{i}$ and $w_{j}$ are independent for $i \neq j$, also satisfies this phenomenon, precisely:
\begin{equation}
    P \bigg(|S_{N}|\geq \delta \sqrt{\frac{N-\sum_{i=1}^{N}|\lambda|^{2i}}{1-|\lambda|^2}}\bigg) = O\ \bigg(\frac{1}{\delta^{2}}\bigg)
\end{equation}
This phenomenon is not just limited to deviations of sums and can be extended to smooth functions of `sufficiently independent' random variables via Talagrands' Inequality:
\end{definition} 
\begin{definition}
    We say that the probability measure $\mu$ satisfies the $L_{p}$-transportation cost inequality on $(\mathcal{X}, d)$ if there is some constant $C > 0$ such that for any probability measure $\nu$
\begin{equation}
    \mathcal{W}_{p}^{d} (\nu,\mu) \leq \sqrt{2C H(\nu|| \mu)},
\end{equation}    
and we use $\mu \in T_{p}^{d}(C)$ as a shorthand notation.
\end{definition}
Since for $p>1$, $T_{p}^{d}(C)$ implies $T_{1}^{d}(C)$ which is related to concentration of Lipschitz functions:
\begin{remark}
\label{rm: lipiid}
[Theorem 1.1 in \cite{djellout2004transportation}: $T_{1}^{d}(C)$ is equivalent to Lipschitz function concentration  ] Given any Lipschitz function $f$ on random variable $x$ with underlying distribution $\mu \in T_{1}^{d}(C)$, we have: 
\begin{equation}
    \label{eq:lipiid} \mathbb{P} \Bigg[ \bigg| f(x) -<f>_{\mu}\bigg| > \epsilon \Bigg] \leq 2\exp\bigg(-\frac{ \epsilon ^2}{2C \|f\|_{L(d)}^2}\bigg).
\end{equation}
\end{remark}
As we will be interested in deviations and typical behaviors of random processes, like trajectories following some Markovian dynamics e.t.c, it is important to understand how it varies with number of iterations and dimension of the underlying space, a useful result is:
\begin{theorem}
\label{thm:dim_ind_tal}
[Theorem 1.1 in \cite{talagrand1996transportation}:Dimension independent tensorization of Gaussian Measure] 
Standard normal on any finite dimensional conventional metric space $\mathbb{R}^{n}$ satisfies $T_{2}(1)$. Moreover, for an $\ell_{2}$ additive metric 
\begin{equation}
    d_{(N)} ^2 (x^N,y^N):= \sqrt{\sum\nolimits_{i=1}^{N} d^2(x_i,y_i)},
\end{equation}
on product space $\mathbb{R}^{n^{\otimes N}}$, isotropic Gaussian satsfies $T_{1}^{d_{(N)}^2} \big(1\big)$.
\end{theorem}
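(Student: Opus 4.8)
The plan is to peel the statement down to the scalar Gaussian together with a tensorization principle, after which the product assertion drops out from the elementary comparison $W_1\le W_2$. Throughout write $\gamma$ for whichever standard/isotropic Gaussian is in play, and fix a competitor $\nu$ with $\nu\ll\gamma$ and $H(\nu\|\gamma)<\infty$ (otherwise both inequalities are vacuous). So the two things to establish are: \textbf{(i)} $N(0,1)\in T_2(1)$; and \textbf{(ii)} that this tensorizes with the constant unchanged, which simultaneously gives $N(0,I_n)\in T_2(1)$ on conventional (Euclidean) $\mathbb{R}^n$ and, for $N$ copies of $N(0,I_n)$ under the $\ell_2$-additive metric $d_{(N)}^2$, the assertion $T_1^{d_{(N)}^2}(1)$.

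For \textbf{(i)} I would use the monotone transport map $T=F_\nu^{-1}\circ F_\gamma$, which satisfies $T_\#\gamma=\nu$, so that $(\mathrm{id},T)_\#\gamma$ is an admissible coupling and $W_2^2(\nu,\gamma)\le\int(T(x)-x)^2\,d\gamma(x)$. Writing $T(x)=x+\theta(x)$, letting $h$ be the Lebesgue density of $\nu$, and using $h(T(x))\,T'(x)=\varphi(x)$ (with $\varphi$ the Gaussian density, $\varphi'=-x\varphi$), a change of variables turns the entropy into
\begin{equation}
\nonumber
H(\nu\|\gamma)=\int\Big(\log\varphi(x)-\log\varphi(T(x))-\log T'(x)\Big)\,d\gamma(x)=\int\Big(x\,\theta(x)+\tfrac12\theta(x)^2-\log T'(x)\Big)\,d\gamma(x),
\end{equation}
the last equality using that $\log\varphi$ is a downward parabola. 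Integration by parts gives $\int x\,\theta\,d\gamma=\int\theta'\,d\gamma$, and $-\log T'(x)=-\log(1+\theta'(x))\ge-\theta'(x)$; the $\theta'$-terms cancel and one is left with $H(\nu\|\gamma)\ge\tfrac12\int\theta^2\,d\gamma\ge\tfrac12 W_2^2(\nu,\gamma)$, i.e. $N(0,1)\in T_2(1)$.

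For \textbf{(ii)} I would invoke the standard gluing/chain-rule argument (as in \cite{talagrand1996transportation}): disintegrate $\nu$ coordinatewise, $\nu(dx)=\prod_i\nu_i(dx_i\mid x_{<i})$, couple the $i$-th coordinate optimally given the $\nu$-history $x_{<i}$, and glue; since the cost $d_{(N)}^2$ is additive, this produces a coupling whose expected cost is $\sum_i\mathbb{E}_{x_{<i}\sim\nu}\,W_2^2\big(\nu_i(\cdot\mid x_{<i}),\gamma_i\big)$, hence $W_2^{d_{(N)}^2}(\nu,\gamma)^2\le\sum_i\mathbb{E}_{x_{<i}\sim\nu}\,W_2^2\big(\nu_i(\cdot\mid x_{<i}),\gamma_i\big)$. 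Applying \textbf{(i)} to each factor and then the entropy chain rule $\sum_i\mathbb{E}_\nu H\big(\nu_i(\cdot\mid x_{<i})\|\gamma_i\big)=H(\nu\|\gamma)$ yields $W_2^{d_{(N)}^2}(\nu,\gamma)^2\le 2H(\nu\|\gamma)$, with the constant still $1$ no matter how many scalar factors are glued — this is the dimension-independence. Specialising to $n$ scalar factors gives $N(0,I_n)\in T_2(1)$; specialising to $nN$ scalar factors (equivalently, $N$ copies of $N(0,I_n)$ with the metric $d_{(N)}^2$) and then using $W_1\le W_2$ for a common cost (Cauchy--Schwarz) gives $W_1^{d_{(N)}^2}(\nu,\gamma)\le\sqrt{2H(\nu\|\gamma)}$, i.e. $T_1^{d_{(N)}^2}(1)$.

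The genuine obstacle is \textbf{(i)}: everything downstream is bookkeeping — measurable selection of the conditional optimal couplings, the chain rule for relative entropy, and monotonicity of $W_p$ in $p$ — whereas the scalar bound both uses the special structure of the Gaussian (concavity of $\log\varphi$ is exactly what manufactures the $\tfrac12\theta^2$ term for free) and carries the only real analysis: one must prove the identity first for $\nu$ with a smooth, suitably decaying density so that the integration by parts is legitimate and boundary terms vanish, and then pass to general $\nu$ by mollification and truncation, using lower semicontinuity of $W_2$ and of $H(\cdot\|\gamma)$. A self-contained alternative to \textbf{(i)}--\textbf{(ii)} with the same dimension-free constant is to combine Gross's logarithmic Sobolev inequality for $\gamma$ (constant $1$) with the Otto--Villani implication $\mathrm{LSI}(C)\Rightarrow T_2(C)$; I would keep the transport-map proof, which is elementary and matches the cited reference.
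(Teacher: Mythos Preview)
Your proposal is correct and is essentially Talagrand's original argument (monotone transport for the scalar case, then Marton-type conditional gluing plus the entropy chain rule for tensorization, with $W_1\le W_2$ at the end). The paper, however, does not supply any proof of this theorem: it is stated as a citation of Theorem~1.1 in \cite{talagrand1996transportation} and used as a black box, so there is no in-paper argument to compare against beyond the reference your proof already matches.
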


From now on throughout the paper we will use $\ell_2$ metric and $d=d_{(N)} ^2 (x^N,y^N)$. A remarkable advantage of preceding result combined with Lipschitz function concentration result is that given a process $x=(x_1,x_2, \ldots,x_{N}) \thicksim \mu_{N}= N\bigg(0,\Sigma_{N}\bigg)$, then $\mu_{N} \in T_{1} \bigg(  \big \|\Sigma_{N}^{\frac{1}{2}} \big \|^{2} \bigg)$ and even though process might have temporal dependencies but as long as one can prove $\big\|\Sigma_{N}^{\frac{1}{2}}\big\|^{2}=O(1)$, dimension independent tensorization follows.
\begin{theorem}
[Lemma A.4 in \cite{tao2010random} Negative second moment  \label{thm:neg_2nd_moment_ineq}] Let $1 \leq d \leq p$ and $Y \in \mathbb{R}^{d \times p}$ be a full rank matrix with singular values, $\sigma_{1}(Y) \geq \sigma_{2}(Y) \ldots \geq \sigma_{d}(Y) $. Let $v_{j}$ be the hyperplane generated by all the rows of $Y$-except the $j-th$ : i.e., span of $y_1,y_2, \ldots, y_{j-1}, y_{j+1}, \ldots, y_{d}$  for $1 \leq j \leq d$, $(e_{j})_{j=1}^{d}$ be the canonical basis of $\mathbb{R}^{d}$, then: 
 \begin{equation}
 \label{eq:neg_2nd_moment_ineq}
     \sum_{j=1}^{d} \sigma_{j}^{-2}(Y)=\sum_{ j=1}^{d}\big \langle \big(YY^{*}\big)^{-1}e_{j} ,e_{j} \big \rangle =\sum_{j=1}^{d} d_{j}^{-2}, 
 \end{equation}
where  $d_{j}^{-2}:=d^{-2}(y_j, v_j)$, distance between $y_{j}$ and the point closest to it in the subspace $v_{j}$
\end{theorem}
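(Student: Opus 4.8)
The plan is to establish the two equalities in \eqref{eq:neg_2nd_moment_ineq} separately, the first being immediate and the second carrying the geometric content. For the first, diagonalize the positive definite matrix $YY^{*}=U\Lambda U^{*}$ with $\Lambda=\mathrm{diag}\big(\sigma_{1}^{2}(Y),\ldots,\sigma_{d}^{2}(Y)\big)$; then $(YY^{*})^{-1}=U\Lambda^{-1}U^{*}$ has eigenvalues $\sigma_{j}^{-2}(Y)$, and since the trace equals both the sum of the eigenvalues and the sum of the diagonal entries, $\sum_{j}\sigma_{j}^{-2}(Y)=\mathrm{tr}\big((YY^{*})^{-1}\big)=\sum_{j}\langle (YY^{*})^{-1}e_{j},e_{j}\rangle$. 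Full rank of $Y$ together with $d\le p$ is exactly what makes $YY^{*}$ invertible, so every object is well defined.

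For the second equality I would fix $j\in[d]$ and write $G:=YY^{*}$ for the Gram matrix of the rows, $G_{kl}=\langle y_{k},y_{l}\rangle$. Set $z:=G^{-1}e_{j}\in\mathbb{R}^{d}$ and form $w:=Y^{*}z=\sum_{k=1}^{d}z_{k}y_{k}\in\mathbb{R}^{p}$, which lies in $\mathrm{span}\{y_{1},\ldots,y_{d}\}$. Then $Yw=YY^{*}z=Gz=e_{j}$, i.e.\ $\langle y_{l},w\rangle=\delta_{lj}$ for every $l$, so $w$ is orthogonal to $v_{j}=\mathrm{span}\{y_{l}:l\neq j\}$ while $\langle y_{j},w\rangle=1$. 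Decomposing $y_{j}=\pi_{j}+h_{j}$ with $\pi_{j}\in v_{j}$ the closest point and $h_{j}\perp v_{j}$, one has $\|h_{j}\|=d(y_{j},v_{j})=d_{j}$. Since $Y$ is full rank the rows are linearly independent, so $v_{j}$ has codimension one inside the $d$-dimensional space $\mathrm{span}\{y_{k}\}$; as $w$ lies in that space and is orthogonal to $v_{j}$, it must be a scalar multiple of $h_{j}$, say $w=\alpha h_{j}$. Pairing with $y_{j}$ gives $1=\langle y_{j},w\rangle=\langle h_{j},w\rangle=\alpha\|h_{j}\|^{2}=\alpha d_{j}^{2}$, hence $\alpha=d_{j}^{-2}$ and $\|w\|^{2}=\alpha^{2}d_{j}^{2}=d_{j}^{-2}$.

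It then remains to identify $\langle G^{-1}e_{j},e_{j}\rangle$ with $\|w\|^{2}$: working over $\mathbb{R}$ and using $Gz=e_{j}$, we get $\langle G^{-1}e_{j},e_{j}\rangle=e_{j}^{*}z=z^{*}Gz=z^{*}YY^{*}z=\|Y^{*}z\|^{2}=\|w\|^{2}=d_{j}^{-2}$. Summing over $j\in[d]$ and chaining with the first equality yields $\sum_{j=1}^{d}\sigma_{j}^{-2}(Y)=\sum_{j=1}^{d}\langle (YY^{*})^{-1}e_{j},e_{j}\rangle=\sum_{j=1}^{d}d_{j}^{-2}$, which is the claim.

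As for difficulty, there is no genuine obstacle here: the statement is essentially a repackaging of Cramer's rule, since $\langle G^{-1}e_{j},e_{j}\rangle=\det(G_{\widehat{j}\,\widehat{j}})/\det(G)$ is exactly the ratio of the Gram determinant of $\{y_{k}\}_{k\neq j}$ to that of $\{y_{k}\}_{k=1}^{d}$, and that ratio is the reciprocal of the squared ``height'' $d_{j}^{2}$ of the full parallelepiped over its base. The only places that need a touch of care are the row/column bookkeeping for $Y$ (its rows are the $y_{j}$, so $Y^{*}z=\sum_{k}z_{k}y_{k}$) and the observation that full rank is precisely what makes the orthogonal complement of $v_{j}$ inside $\mathrm{span}\{y_{k}\}$ one-dimensional, which is what forces $w\parallel h_{j}$.
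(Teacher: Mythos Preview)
Your proof is correct. The paper does not actually supply a proof of this theorem: it is quoted verbatim as Lemma~A.4 of \cite{tao2010random} and used as a black box, so there is nothing to compare against. Your argument---identifying $\langle (YY^{*})^{-1}e_{j},e_{j}\rangle$ with $\|Y^{*}(YY^{*})^{-1}e_{j}\|^{2}$ and then recognizing that vector as the (scaled) orthogonal component $h_{j}$ of $y_{j}$ off of $v_{j}$---is the standard proof, and your closing remark about the Cramer/Gram-determinant interpretation is exactly the route taken in Tao--Vu. The bookkeeping points you flag (rows vs.\ columns, and why $h_{j}$ lies in the row space so that the one-dimensional complement argument goes through) are the only places a reader might stumble, and you handle them cleanly.
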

An important observation about orthogonal projections which can lead to lower bound on the smallest eigenvalue of the sample covariance matrix :
\begin{corollary}
\label{cor:vimpescpnegmomstuck}
Let $x_{j} \in \mathcal{S}^{p-1}$ such that it is orthogonal to subspace $v_{j}$ and $P_{v_{j}^{\perp}}$ be the orthogonal projection onto subspace orthogonal to $v_{j}$ then using properties of projections and cauchy-schwarz:
\begin{equation}
    \big|\langle y_{j},x_{j}\rangle\big|=\big| \langle P_{v_{j}^{\perp}}(y_j),x_{j} \rangle \big| \leq \big \| P_{v_{j}^{\perp}}(y_j) \big\| =d(y_{j},v_{j}).
\end{equation}
\end{corollary}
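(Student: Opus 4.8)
The plan is simply to unwind the definitions of orthogonal projection and of distance to a subspace, and then invoke Cauchy--Schwarz, exactly as the statement advertises. First I would record the orthogonal decomposition $y_j = P_{v_j^{\perp}}(y_j) + P_{v_j}(y_j)$, which is legitimate because $v_j = \mathrm{span}\{y_1,\dots,y_{j-1},y_{j+1},\dots\}$ is a finite--dimensional subspace of $\mathbb{R}^{p}$, so $\mathbb{R}^{p} = v_j \oplus v_j^{\perp}$. Pairing this decomposition against $x_j$ and using bilinearity gives $\langle y_j, x_j\rangle = \langle P_{v_j^{\perp}}(y_j), x_j\rangle + \langle P_{v_j}(y_j), x_j\rangle$. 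The hypothesis that $x_j \in \mathcal{S}^{p-1}$ is orthogonal to $v_j$ means $x_j \in v_j^{\perp}$, so the second summand pairs a vector of $v_j$ against a vector of $v_j^{\perp}$ and therefore vanishes; equivalently, $P_{v_j^{\perp}}$ fixes $x_j$, so $\langle y_j, x_j\rangle = \langle y_j, P_{v_j^{\perp}}(x_j)\rangle = \langle P_{v_j^{\perp}}(y_j), x_j\rangle$ by self-adjointness and idempotence of the projector. Either route yields the claimed identity.

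For the inequality I would apply Cauchy--Schwarz to the right-hand side, $\big|\langle P_{v_j^{\perp}}(y_j), x_j\rangle\big| \leq \big\|P_{v_j^{\perp}}(y_j)\big\|\,\|x_j\|$, and then use $\|x_j\| = 1$ since $x_j \in \mathcal{S}^{p-1}$, collapsing the bound to $\big\|P_{v_j^{\perp}}(y_j)\big\|$. To finish with the last equality I would invoke the standard fact that the point of $v_j$ closest to $y_j$ is $P_{v_j}(y_j)$: for any $v \in v_j$, Pythagoras gives $\|y_j - v\|^{2} = \|P_{v_j^{\perp}}(y_j)\|^{2} + \|P_{v_j}(y_j) - v\|^{2} \geq \|P_{v_j^{\perp}}(y_j)\|^{2}$, with equality at $v = P_{v_j}(y_j)$, so $d(y_j, v_j) = \|y_j - P_{v_j}(y_j)\| = \|P_{v_j^{\perp}}(y_j)\|$. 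Chaining the identity, the Cauchy--Schwarz step, and this distance formula gives the corollary.

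There is essentially no obstacle here; the only point worth a remark is that the statement is vacuous unless such an $x_j$ exists, i.e. unless $v_j \neq \mathbb{R}^{p}$ (equivalently $y_1,\dots,y_{j-1},y_{j+1},\dots$ fail to span), which is precisely the regime in which the negative-second-moment identity of Theorem~\ref{thm:neg_2nd_moment_ineq} is informative. In that regime the corollary will be used in the reverse direction: exhibiting a convenient unit test vector $x_j \perp v_j$ for which $|\langle y_j, x_j\rangle|$ can be bounded below furnishes a lower bound on $d_j = d(y_j,v_j)$, hence an upper bound on $d_j^{-2}$, hence --- via $\lambda_n(X_-X_-^{*})^{-1} = \sigma_n^{-2}(X_-) \leq \sum_j d_j^{-2}$ --- a lower bound on the smallest eigenvalue of the sample covariance matrix.
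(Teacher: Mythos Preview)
Your proof is correct and follows exactly the approach the paper indicates: the paper's ``proof'' is entirely contained in the one-line statement (``using properties of projections and cauchy-schwarz''), and you have simply spelled out the orthogonal decomposition, the self-adjointness/idempotence of the projector, the Cauchy--Schwarz step with $\|x_j\|=1$, and the identification $\|P_{v_j^{\perp}}(y_j)\| = d(y_j,v_j)$. Nothing further is needed.
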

Non-asymptotic results trivially  follow after we can recover typical size explicitly in number of iterations $N$ and dimension of the underlying state space $n$ (see work on Talagrands' inequality and method of moments in \cite{naeem2023spectral}), so we will explicitly focus on typical size of various spectral statistics that show up in least squares regression on Markovian data.  

\paragraph{Paper structure:}Remaining part of this section introduces the reader to the model of Linear time invariant(LTI) systems: more specifically how not just the magnitude of the eigenvalues of given linear transformation, but their algebraic and geometric multiplicities are necessary to characterize resulting dynamics entirely. In section \ref{sec:OLSintro}, we summarize the basic geometric aspects of least square regression and how its' performance is sensitive to linear dependence between the given basis function(in Markovian setting basis function are essentially rows of the data matrix: so one should not take for granted spatial correlations). Dependence of error statistics on negative moments of the sample covariance matrix(essentially sharp estimates of all the eigenvalues) is also shown. In section \ref{sec:largest_Sing}, we show that in spatially inseparable case the largest eigenvalue of the sample covariance matrix suffers from the curse of dimensionality when spectral radius is outside half unit disc and this is when OLS turns out to be \emph{transient: longer trajectory can lead to worsening of estimation error} . Typical order of the bulk and smallest eigenvalue is studied in section \ref{sec:eigsmp}, where we combine spectral information of state transformation $A$, $\sigma(A)$ with concentration of measure phenomenon and tools from perturbation theory to offer new insights into spread of bulk eigenvalues and an explicity upper bound on the typical size of the smallest eigenvalue. A heuristic proof of the transient behavior of OLS in spatially inseparable case is provided in section \ref{sec:Tal} via tensorization of \emph{Talagrands' inequality} for stable random dynamical system and it is shown that although high dimensional stable ARMA models satisfy iteration independent Talagrands inequality but the constant can have exponential dependece on dimension of the state space. Conclusion and future work is discussed in \ref{sec:conc}.
\paragraph{Model Specification:}
Model under consideration is a following $n-$ dimensional stable LTI-system with isotropic Gaussian noise(interchangeably called $n-$ dimensional stable ARMA model or $n-$ dimensional stable LG).  
\begin{equation}
\label{eq:LGS}
    x_{t+1}= Ax_t+ w_{t}, \hspace{10pt} \rho(A) <1 \hspace{10pt} \text{and i.i.d }~ w_{t} \thicksim \mathcal{N}(0,\mathcal{I}_n).
\end{equation}
It mixes to stationary distribution $\mu_{\infty} \thicksim \mathcal{N}(0, P_{\infty})$, where $P_{\infty}$ is the unique positive definite solution of the following Lyapunov equation:
\begin{equation}
\label{eq:contgram}
    A^{*}P_{\infty}A-P_{\infty}+I_{n}=0.
\end{equation}


Position or magnitude of eigenvalues associated to a linear operator $A$ only provides partial information about its' properties (for the ease of exposition, throughout this paper we will assume that $A$ does not have any non-trivial null space). In this paper, we will study operators and matrices via their actions on associated invariant subspaces and concepts like generalized eigenevectors. Topic in itself can take a semester of work and we refer the reader to \cite{axler1995down, axler1997linear}. However, we will try here to give the reader a quick intuition of this approach, often at the cost of rigor and thoroughness. One of the advantage of taking this approach is: \emph{control on the $k-th$ power of matrix norm, independent of the basis structure .}   Roughly speaking, algebraic multiplicity of eigenvalues follow from chatacteristic polynomial of the matirx. 
\begin{equation}
    \label{eq:detcharpoly} det(zI-A)= \prod_{i=1}^{K} (z-\lambda_{i})^{m_i},
\end{equation}
where $\lambda_{i}$ are distinct with multiplicity $m_{i}$ and  $\sum_{i=1}^{K} m_i=n$. Since algebraic multiplicity of eigenvalue $\lambda_{i}$ is $m_{i}$ , we denote it by $AM(\lambda_{i})=m_{i}$. Similarly with each $\lambda_{i} \in \sigma(A)$, their is an associated set of eigenvectors and dimension of their span corresponds to geometric multiplicity of $\lambda_{i}$ , which we denote by $GM(\lambda_{i})=dim[N(A-\lambda_{i}I)]$. Recall, from linear algebra:
\begin{lemma}
    Let $A \in \mathbb{C}^{n \times n}$. Then $A$ is diagonalizable iff there is a set of $n$ linearly independent vectors, each of which is an eigenvector of $A$. 
\end{lemma}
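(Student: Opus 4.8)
The plan is to prove both implications by translating the eigenvector condition into a single matrix factorization. First I would fix notation: suppose $v_1,\dots,v_n \in \mathbb{C}^n$ are eigenvectors of $A$ with $Av_i = \lambda_i v_i$, and assemble them as the columns of $V = [\,v_1 \mid \cdots \mid v_n\,]$. The key algebraic identity is $AV = V\Lambda$ with $\Lambda = \mathrm{diag}(\lambda_1,\dots,\lambda_n)$, which one checks column by column: the $i$-th column of $AV$ is $Av_i$ and the $i$-th column of $V\Lambda$ is $\lambda_i v_i$. For the ($\Leftarrow$) direction, linear independence of $v_1,\dots,v_n$ is precisely the statement that $V$ is invertible, so I may left-multiply by $V^{-1}$ to obtain $V^{-1}AV = \Lambda$, which is exactly the definition of $A$ being diagonalizable.

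For the ($\Rightarrow$) direction I would run the same argument in reverse. If $A$ is diagonalizable, write $A = V\Lambda V^{-1}$ with $V$ invertible and $\Lambda = \mathrm{diag}(\lambda_1,\dots,\lambda_n)$ diagonal; then $AV = V\Lambda$, and comparing $i$-th columns gives $Av_i = \lambda_i v_i$, where $v_i$ is the $i$-th column of $V$. Since $V$ is invertible, none of its columns is the zero vector, so each $v_i$ is a genuine eigenvector, and invertibility simultaneously yields that the $n$ columns are linearly independent. This closes the equivalence.

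The one point deserving care — the closest thing to an obstacle in an otherwise routine proof — is keeping the bookkeeping between columns of $V$ and diagonal entries of $\Lambda$ consistent, so that the index $i$ labels both $v_i$ and its associated eigenvalue $\lambda_i$; this is immediate from the column-wise reading of $AV = V\Lambda$, but it is worth stating explicitly to avoid conflating ``there exist $n$ linearly independent eigenvectors'' with ``$A$ has $n$ distinct eigenvalues'' (the latter is sufficient, by the standard fact that eigenvectors for distinct eigenvalues are independent, but is not necessary, as the identity matrix shows). I would also note that the argument uses nothing beyond invertibility of $V$, so the lemma holds verbatim over an arbitrary field, and that it dovetails with the paper's later emphasis on generalized eigenvectors: failure of diagonalizability is exactly the deficiency $\mathrm{GM}(\lambda_i) < \mathrm{AM}(\lambda_i)$ for some eigenvalue, which is the regime driving the spatial correlations studied in the sequel.
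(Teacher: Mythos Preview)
Your proof is correct and is exactly the standard argument. The paper itself does not prove this lemma: it is stated as a recalled fact from linear algebra (immediately preceded by ``Recall, from linear algebra:'') and is used only to motivate the passage to generalized eigenvectors when $\mathrm{GM}(\lambda_i) < \mathrm{AM}(\lambda_i)$, so there is no alternative approach to compare against.
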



So, in a situation where $GM(\lambda_{i}) <AM(\lambda_{i}) $, eigenvectors do not span $\mathbb{C}^{n}$ and one resorts with spanning the underlying state space by direct sum decomposition of $A-$ invariant subspaces (which might be spanned by more that one linearly independent vector, comprising of eigenvector and generalized eigenvectors). 
\begin{definition}
    Given a matrix $A \in \mathbb{C}^{n \times n}$ and a subspace $M \subset \mathbb{C}^{n}$, we say that $M$ is an $A-$ invariant subspace if $AM \subset M$.
\end{definition}

Indeed, preceding philosophy is underlying principal of Jordan canonical forms, but its' geometric intricacies can not be ignored when dealing with High Dimensional estimation and control problems.
\begin{proposition}
\label{prop:dirsuminv}
We can decompose the underlying state space as a direct sum decomposition of $A-$ invariant subspaces $[M_{\lambda_{i}}]_{i=1}^{K}$ denoted by:  
\begin{equation}
\label{eq:directsumAinv}
    \mathbb{C}^{n}= M_{\lambda_{1}} \oplus M_{\lambda_{2}} \oplus \ldots \oplus M_{\lambda_{K}},
\end{equation}    
where $M_{\lambda_{i}}=N(A-\lambda_{i}I)^{m_{i}}$ is called the \textbf{Generalized eigenspace} associated with eiegenvalue $\lambda_{i}$(see e.g., theorem 3.11 in \cite{axler1995down}).
\end{proposition}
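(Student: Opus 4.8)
The plan is to establish this as the classical primary (generalized eigenspace) decomposition, constructing the projections onto the $M_{\lambda_i}$ explicitly as polynomials in $A$ via the characteristic polynomial, the Cayley--Hamilton theorem, and Bézout's identity for coprime polynomials.

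First I would set $p(z)=\det(zI-A)=\prod_{i=1}^{K}(z-\lambda_i)^{m_i}$ as in \eqref{eq:detcharpoly} and, for each $i$, put $q_i(z):=p(z)/(z-\lambda_i)^{m_i}=\prod_{j\neq i}(z-\lambda_j)^{m_j}$. Since the $\lambda_i$ are distinct, the family $q_1,\dots,q_K$ has no common root, hence is coprime, so there exist polynomials $r_1,\dots,r_K$ with $\sum_{i=1}^{K} r_i(z)q_i(z)\equiv 1$. Define $P_i:=r_i(A)q_i(A)$; then $\sum_{i=1}^{K}P_i=I$. For $i\neq j$ the product $q_i(z)q_j(z)$ is divisible by $p(z)$ (it contains each factor $(z-\lambda_k)^{m_k}$ to at least the first power), so $q_i(A)q_j(A)=0$ by Cayley--Hamilton, whence $P_iP_j=0$; combining this with $\sum_i P_i=I$ gives $P_i^2=P_i$, i.e. each $P_i$ is an idempotent and the $P_i$ are mutually annihilating. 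Because each $P_i$ is a polynomial in $A$ it commutes with $A$, so $M_i:=\operatorname{range}(P_i)$ is $A$-invariant, and the relations $\sum_i P_i=I$, $P_iP_j=\delta_{ij}P_i$ yield the direct sum $\mathbb{C}^n=M_1\oplus\cdots\oplus M_K$.

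The substantive step is then to identify $M_i$ with $N\big((A-\lambda_i I)^{m_i}\big)$. The inclusion $M_i\subseteq N\big((A-\lambda_i I)^{m_i}\big)$ is immediate: $(A-\lambda_i I)^{m_i}P_i=r_i(A)(A-\lambda_i I)^{m_i}q_i(A)=r_i(A)p(A)=0$. For the reverse inclusion, take $v\in N\big((A-\lambda_i I)^{m_i}\big)$; for each $j\neq i$, $q_j(z)$ is divisible by $(z-\lambda_i)^{m_i}$, so $q_j(A)v=0$ and hence $P_jv=0$, giving $v=\sum_k P_k v=P_i v\in M_i$. This proves $M_i=N\big((A-\lambda_i I)^{m_i}\big)=M_{\lambda_i}$ and finishes the decomposition; as a by-product $\dim M_{\lambda_i}=\operatorname{rank}P_i$ and $\sum_i\dim M_{\lambda_i}=n$, and one verifies $\dim M_{\lambda_i}=m_i$ by comparing the characteristic polynomial of the restriction $A|_{M_{\lambda_i}}$ with \eqref{eq:detcharpoly}.

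I expect the only place requiring genuine care to be this last identification --- in particular the reverse inclusion --- since it is where the precise factorization structure (divisibility of each $q_j$, $j\neq i$, by $(z-\lambda_i)^{m_i}$) is exploited, whereas the rest is bookkeeping on top of Cayley--Hamilton. An alternative, Bézout-free route is induction on $K$ via Fitting's lemma: with $T:=A-\lambda_1 I$ one has $\mathbb{C}^n=\ker T^{n}\oplus\operatorname{im} T^{n}$ with both summands $A$-invariant, $\ker T^{n}=N\big((A-\lambda_1 I)^{m_1}\big)$, and $A$ restricted to $\operatorname{im} T^{n}$ has spectrum $\{\lambda_2,\dots,\lambda_K\}$ with unchanged multiplicities, so the induction hypothesis applies; there the mild technical point is showing that $\ker T^{k}$ stabilizes exactly at exponent $m_1$ (or simply working with exponent $n$ throughout and reconciling the exponents at the end).
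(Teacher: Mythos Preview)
Your argument is correct: the Bézout/Cayley--Hamilton construction of the spectral projections $P_i=r_i(A)q_i(A)$ is a standard and clean route to the primary decomposition, and your verification of both inclusions $M_i=N\big((A-\lambda_iI)^{m_i}\big)$ is complete. The only cosmetic point is that you do not need the last paragraph on $\dim M_{\lambda_i}=m_i$ for the proposition as stated.

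However, note that the paper does not actually \emph{prove} this proposition --- it simply cites Theorem~3.11 of Axler's ``Down with Determinants.'' That reference is programmatically determinant-free: Axler establishes the generalized eigenspace decomposition by induction without ever invoking the characteristic polynomial or Cayley--Hamilton, instead using only the existence of an eigenvalue (proved via a polynomial-of-minimal-degree argument) and the stabilization $\ker(A-\lambda I)\subseteq\ker(A-\lambda I)^2\subseteq\cdots$. Your main argument is thus methodologically opposite to the cited source, while your alternative route via Fitting's lemma is essentially Axler's proof in disguise. Both approaches buy the same conclusion; the Bézout route has the advantage of producing the projections $P_i$ explicitly as polynomials in $A$ (which the paper does use immediately afterward in \eqref{eq:addId}), whereas Axler's route avoids the dependence on determinants and Cayley--Hamilton at the cost of a less explicit construction of the $P_i$.
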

\paragraph{Direct sum decomposition of the state space via projections onto $A-$ invariant subspace}
Furthermore, one can define orthogonal projection matrices $[P_{\lambda_{i}}]_{i=1} ^{K}$ associated to these invariant subspaces
and the identity matrix can be written as:
\begin{equation}
    \label{eq:addId} I_{n}=P_{\lambda_{1}} \oplus P_{\lambda_2} \oplus \ldots \oplus P_{\lambda_{K}}.
\end{equation}
Consequently, every $x \in \mathbb{R}^{n}$ can be written uniquely as $\oplus_{i=1}^{K} x_{\lambda_i}$

\begin{theorem}
    [Theorem 6 in \cite{naeem2023spectral}]. For a stable matrix $A$ with $K$ distinct eigenvalues, let discrepancy related to eigenvalue $\lambda_{i}$ be $D_{\lambda_{i}}:= AM(\lambda_{i})-GM(\lambda_{i})$, then 
    \begin{equation}
        \label{eq:qnthdl} \|A^{k}\| \leq \max_{1 \leq i \leq K}  k^{D_{\lambda_{i}}} |\lambda_{i}|^{k} \bigg( \frac{1-|\lambda_{i}|}{1-|\lambda_{i}|^{D_{\lambda_{i}+1}}}\bigg)
    \end{equation}
\end{theorem}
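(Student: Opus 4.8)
The plan is to pass to the $A$-invariant pieces and reduce the estimate to a nilpotent perturbation of a scalar. First I would invoke Proposition~\ref{prop:dirsuminv} to write $\mathbb{C}^n = M_{\lambda_1}\oplus\cdots\oplus M_{\lambda_K}$ with $M_{\lambda_i}=N(A-\lambda_i I)^{m_i}$ each $A$-invariant, together with the resolution of the identity \eqref{eq:addId}. Relative to this decomposition $A^k$ is block-diagonal, $A^k = \bigoplus_i (A|_{M_{\lambda_i}})^k$, so it suffices to bound $\|(A|_{M_{\lambda_i}})^k\|$ for each $i$ and take the maximum — working in the norm induced by an inner product for which the $M_{\lambda_i}$ are mutually orthogonal. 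This is exactly the \emph{basis-independence} point advertised above: in the Euclidean norm one instead absorbs the (bounded) conditioning of the change of basis, which does not touch the $N$- and $n$-dependence relevant downstream.

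Next, on $M_{\lambda_i}$ write $A|_{M_{\lambda_i}} = \lambda_i I + N_i$ with $N_i := (A-\lambda_i I)|_{M_{\lambda_i}}$ nilpotent. The combinatorial heart of the argument is the claim that the nilpotency index of $N_i$ is at most $D_{\lambda_i}+1$: indeed $M_{\lambda_i}$ splits into $GM(\lambda_i)$ Jordan chains whose lengths sum to $AM(\lambda_i)=m_i$, so the longest chain has length at most $m_i-(GM(\lambda_i)-1) = D_{\lambda_i}+1$, and $N_i$ annihilates every chain after that many applications, i.e. $N_i^{\,D_{\lambda_i}+1}=0$.

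Since $\lambda_i I$ and $N_i$ commute, the binomial theorem truncates:
\[
(A|_{M_{\lambda_i}})^k = (\lambda_i I + N_i)^k = \sum_{j=0}^{D_{\lambda_i}} \binom{k}{j}\,\lambda_i^{\,k-j}\,N_i^{\,j}.
\]
I would then rescale, within each Jordan chain, by successive powers of $|\lambda_i|$ so as to arrange $\|N_i\|\le|\lambda_i|$ — this is the only place the stability hypothesis $\rho(A)<1$ enters quantitatively — whence
\[
\|(A|_{M_{\lambda_i}})^k\| \;\le\; \sum_{j=0}^{D_{\lambda_i}} \binom{k}{j}\,|\lambda_i|^{\,k-j}\,\|N_i\|^{\,j} \;\le\; |\lambda_i|^{\,k}\sum_{j=0}^{D_{\lambda_i}} \binom{k}{j}.
\]
Bounding the binomial coefficients by the leading term and recognizing the residual geometric series $\sum_{l=0}^{D_{\lambda_i}}|\lambda_i|^l = \frac{1-|\lambda_i|^{D_{\lambda_i}+1}}{1-|\lambda_i|}$ yields the closed form $k^{D_{\lambda_i}}|\lambda_i|^k\frac{1-|\lambda_i|}{1-|\lambda_i|^{D_{\lambda_i}+1}}$; taking the maximum over $i$ closes the proof.

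The main obstacle is the bookkeeping in this last step. One must check that the rescaling used to shrink $\|N_i\|$ does not reintroduce a compensating factor, i.e. that the estimate is carried out legitimately in an $A$-adapted norm, and that the crude binomial bound together with the geometric-series comparison reproduces \emph{exactly} the stated constant $\frac{1-|\lambda_i|}{1-|\lambda_i|^{D_{\lambda_i}+1}}$ — and the correct interplay between the polynomial factor $k^{D_{\lambda_i}}$ and the decay $|\lambda_i|^k$ — rather than merely something of the same order. Everything else (the direct-sum decomposition, the nilpotency-index count, the binomial truncation) is routine linear algebra.
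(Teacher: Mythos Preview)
The paper does not prove this result; it is quoted as Theorem~6 of \cite{naeem2023spectral}, so there is no in-paper argument to compare against. Your outline --- invariant-subspace decomposition, the nilpotency bound $N_i^{D_{\lambda_i}+1}=0$ via the longest-Jordan-chain count, and the truncated binomial expansion --- is the natural route, and those structural steps are correct.

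The caution you flag at the end is more than bookkeeping, however. Working in the Jordan basis with $\|N_i^j\|\le 1$ and the crude bound $\binom{k}{j}\le k^{D_{\lambda_i}}$ one obtains
\[
\bigl\|(A|_{M_{\lambda_i}})^k\bigr\| \;\le\; k^{D_{\lambda_i}}\sum_{j=0}^{D_{\lambda_i}}|\lambda_i|^{k-j} \;=\; k^{D_{\lambda_i}}\,|\lambda_i|^{\,k-D_{\lambda_i}}\cdot\frac{1-|\lambda_i|^{D_{\lambda_i}+1}}{1-|\lambda_i|},
\]
so the geometric factor appears \emph{right side up}, together with an extra $|\lambda_i|^{-D_{\lambda_i}}$. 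The constant printed in \eqref{eq:qnthdl} is the reciprocal of this and is strictly less than $1$; but already a single $2\times 2$ Jordan block with eigenvalue $\lambda\in(0,1)$ has $\|J^k\|\ge k\lambda^{k-1}>k\lambda^{k}>k\lambda^{k}\cdot\tfrac{1}{1+\lambda}$, so the inequality as typeset cannot hold in the operator $2$-norm. Your difficulty in reproducing the exact displayed constant is therefore an inconsistency in the quoted formula (almost certainly an inverted fraction in the source), not a defect in your argument. The route you sketch delivers the correct order $k^{D_{\lambda_i}}|\lambda_i|^{k}$ with the expected geometric constant, which is all that the paper uses downstream.
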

A peculiar case for system identification will turn out to of discrepancy of $n-1$ which we call:
\begin{definition} [Stable with Strong Spatial Correlations, S-w-SSCs]
 Consider the following example of a stable state transition matrix with only one distinct eigenvalue of $\rho \in (0,1)$  
\begin{align}
\label{eq:S-w-SSCS}
J_{n}(\rho):=
\begin{bmatrix}
\rho & 1 & 0 &  \cdots &0 &0 \\
0 & \rho  & 1 & \ddots & 0 & 0\\
0 & 0 & \rho  & \ddots & 0 & 0 \\
0 & 0 & 0 & \ddots & 1 & 0
\\
\vdots & \ddots & \ddots & \ddots & \rho & 1 \\
0 & 0 & 0 & \cdots & 0 & \rho   
\end{bmatrix}
\end{align}  
with algebraic multiplicity of $n$ but only one linearly independent eigenvector.
\end{definition}
\section{Least Squares on Markovian Data}
\label{sec:OLSintro}
In this section we analyse the problem of OLS estimation for system transition matrix $A$ from single observed (as in \cite{sarkar2019near}, \cite{simchowitz2018learning}, \cite{tsiamis2021linear}) trajectory of $(x_0,x_1, \ldots,x_{N})$ satisyfing:
\begin{equation}
    \label{eq:LGS}
    x_{t+1}=Ax_{t}+w_{t}, \hspace{10pt} \text{ where } w_{t} \thicksim N(0,I) .
\end{equation}  
Before delving into solution of the estimation problem, we would like to give a brief overview into working of general OLS regression along with potential limitations:
A priori you are given $k-$ basis functions of an $n-$ dimensional vector space, where $n>k$ and one can form an $n \times k$ matrix $X:=[v_1,v_2,\ldots,v_{k}] \in \mathbb{R}^{n \times k}$. Now one observes $y$ :
\begin{equation}
    y=X\beta^{*}+\epsilon, \hspace{5pt} \epsilon \thicksim N(0,I_{n})
\end{equation}
and tries to estimate $\beta:=\beta(X,y) \in \mathbb{R}^{k \times 1}$ by projecting observation $y$ onto the span of $X$, we get $\beta=(X^{*}X)^{-1}X^{*}y$ and the expected error in $\ell_{2}$ norm is:
\begin{align}
   \mathbb{E}\| \beta-\beta^{*} \|^{2}= Tr([X^{*}X]^{-1})=\sum_{j=1}^{k} d^{-2}_{j}= \sum_{j=1}^{k} \sigma^{-2}_{j}(X),
\end{align}
where last equality follows from negative second moment identity from Theorem \ref{thm:neg_2nd_moment_ineq} and one can immediately conclude: if any column of $X$ gets closer in terms of $\ell_{2}$ distance on $\mathbb{R}^{n}$ to the span of remaining $k-1$ columns, expected squared error in OLS estimation will increase. Vaguely speaking, linear dependence between basis of the data matrix deteriorates the performance of standard OLS regression. 
 
OLS solution for identification of LTI system from a single observed trajectiry is:
\begin{equation}
    \label{eq:OLSsol} \hat{A}= \arg \min_{B \in \mathbb{R}^{n \times n}}  \sum_{t=0}^{N-1} \|x_{t+1}-Bx_{t}\|^{2}.
\end{equation}
Recall, $X_{+}=[x_1, x_2,  \ldots, x_N]$ and $ X_{-}=[x_0, x_1, \ldots, x_{(N-1)}]$, noise covariates $E=[w_0, w_1, \ldots, w_{N-1}]$, $y_{j}$ be the rows of $X_{-}$ and $v_{j}$ be the hyperplane as defined in theorem \ref{thm:neg_2nd_moment_ineq}. Also notice that conditioned on $x_{0}=0$ state at time $i$ can be represented in terms of powers of $A$ and noise covariates as:  
\begin{equation}
\label{eq:dynamgauss}
x_{i}= \sum_{t=1}^{i}A^{i-t}w_{t-1}, \hspace{5pt} \textit{and graphical representation}    
\end{equation}
\[  A^{*}:=
\begin{bmatrix}     
    \vert & \vert & \vert \\
    b_{1} &  \vert  & b_n   \\
    \vert & \vert & \vert 
\end{bmatrix}
\hspace{5pt}, X^{*}_{-}=
\begin{bmatrix}
     \vert & \vert & \vert &  \vert &\vert\\
     \vert & \vert & \vert & \vert &\vert \\
    \vert & \vert & \vert & \vert &\vert \\
    y_{1} & y_{2} & \vert & y_{n-1} &y_{n}   \\
    \vert & \vert & \vert & \vert &\vert \\
    \vert & \vert & \vert & \vert &\vert \\
    \vert & \vert & \vert &  \vert &\vert
\end{bmatrix}
\hspace{5pt},  X^{*}_{+}=
\begin{bmatrix}
    \vert & \vert & \vert &  \vert &\vert\\
     \vert & \vert & \vert & \vert &\vert \\
    \vert & \vert & \vert & \vert &\vert \\
    z_{1} & z_{2} & \vert & z_{n-1} &z_{n}   \\
    \vert & \vert & \vert & \vert &\vert \\
    \vert & \vert & \vert & \vert &\vert \\
    \vert & \vert & \vert &  \vert &\vert
\end{bmatrix}
\]
In this dynamical version of OLS, at most $n$ basis are provided by columns of $X^{*}_{-}$. Then $i-$ th column of $A^{*}$ corresponds to co-effiecients estimated, by orthogonally projecting observation $z_{i}$ onto span of $X^{*}_{-}$
\begin{equation}
    \hat{b}_{i}=\big( X_{-} X^{*}_{-} \big)^{-1} X_{-}z_{i}
\end{equation}
Then the closed form expression for  Least squares solution and error are:
\begin{align}
 & \label{eq:OLS}    \hat{A}= X_{+}X_{-} ^{\dagger}, \hspace{5pt} \textit{where} \hspace{3pt} X_{-} ^{\dagger}:= X_{-} ^{*}(X_{-}X_{-}^{*})^{-1}  \\ & \hspace{30pt} \label{eq:OLSerror} \big\|A-\hat{A}\big\|_{F} =\big\|EX_{-}^{\dagger}\big\|_{F}
\end{align} 
\textbf{Estimation error is independent of basis function choice:}
Recall that if $A=A^{*}$, then there exists a unitary matrix $U \in \mathbb{C}^{n \times n}$ and a real diagonal matrix $\Omega \in \mathbb{R}^{n \times n}$ such that $A=U^{*} \Omega U$ and a coordinate transform for \eqref{eq:LGS}, i.e., $\hspace{3pt} z_{t+1}=\Omega z_t+Uw_t $, where $z_{t}:=Ux_{t}$ and $Uw_t$ is again an isotropic Gaussian.  Now let $W:=UE$, and notice that rows of $Z_{-}:=U X_{-}$ are independent one dimensional Linear Gaussian dynamics with growth/ decay parameter defined by corresponding diagonal entery in $\Omega$. As unitary by definition $U^* U=UU^*=I$, implies $U^{-1}=U^{*}$, we have 
\begin{align}
     & \nonumber \big\| WZ_{-}^{*} \big(Z_{-}Z_{-}^{*}\big)^{-1} \big\|_{F}^{2}=\big\| UE(UX_{-})^{*}(UX_{-}X_{-}^{*}U^{*})^{-1} \big\|_{F}^{2} \\ & \nonumber = Tr(EX_{-}^{*}(X_{-}X_{-}^{*})^{-2}X_{-}E^{*})=\|EX_{-}^{*}(X_{-}X_{-}^{*})^{-1}\|_{F}^{2}.  
\end{align}
i.e., for the Hermitian case all the rows of $X_{-}$ are independent of each other. For non-Hermitian case we can get independent row-blocks, as in 
\newline
\textbf{Statistical independence between the row blocks of the data matrix }
\label{subsec:Sp-gl}
Notice that using the direct sum decomposition of the state space, $i-$ the column of the data matrix can be decomposed into row blocks that are statistically independent of each other:  
\begin{align}
 \label{eq:sdmx} x_{i}= \sum_{t=1}^{i}A^{i-t}w_{t-1}&=\sum_{t=1}^{i} A^{i-t}\bigg( \sum_{m=1}^{K} P_{\lambda_{m}}\bigg) w_{t-1}  = \sum_{m=1}^{K}  \underbrace{\sum_{t=1}^{i}A_{\lambda_{m}}^{i-t} w_{t-1}^{m}}_{:=B_{\lambda_{m}}(i)}.
\end{align}
Now recall that $D_{\lambda_{m}}$ was used to denote discrepancy between algebraic and geometric multiplicity of eigenvalue $\lambda_{m}$ where for fixed $t$ notice that $\mathbb{E}\|w_{t-1}^{m}\|^{2}=\mathbb{E}\big \langle w_{t-1},P_{\lambda_{m}}w_{t-1} \big\rangle=Tr(P_{\lambda_m})=D_{\lambda_{m}}+1$. So $w_{t-1}^{m}$ is standard normal on $A-$ invariant subspace $M_{\lambda_{m}}$ of dimension $D_{\lambda_{m}}+1$. Since for all $t \in \mathbb{N}$ and $m,m^{'} \in [K]$ such that $m \neq m^{'}$, $w_{t}^{m}$ and $w_{t}^{m^{'}}$ are independent: $\mathbb{E}\big[w_{t}^{m} (w_{t}^{m^{'}})^{*}\big]=\mathbb{E}\big[P_{\lambda_{m}}w_{t}(P_{\lambda_{m^{'}}}w_{t})^{*}\big]=\mathbb{E}\big[P_{\lambda_{m}}w_{t}w_{t}^{*}P_{\lambda_{m^{'}}}\big]= P_{\lambda_{m}}P_{\lambda_{m^{'}}}= \delta_{m}(m^{'})I_{n}$, combined with the fact that $A_{m}^{*} A_{m'}=0$ for $m \neq m'$ (because $N(A_{\lambda_{m}}^{*})=\oplus_{j \neq m}^{K} M_{\lambda_{j}}$) implies that $B_{\lambda_{m}}(i)$ and $B_{\lambda_{m'}}(i)$ are independent of each other for all $i$. Therefore, data matrix essentially contains time realization of $K$-low dimensional dynamical systems, which are statistically independent of each other. As we showed in the previous section that estimation error is independent of the choice of underlying basis, w.l.o.g we can take canonical basis implying rows in data matrix comprises of independent blocks, where each block is a time realization of trajectory generated via canonical form of linear transformation with specified eigenvalue and size of the block equals $D_{\lambda_{m}}+1$.
\newline
\textbf{Error Statistics via negative moments of Sample Covariance matrix:} Let $\sigma_{1}(X_{-}) \geq \sigma_{2}(X_{-}) \geq \ldots \sigma_{n}(X_{-})>0$, be the singular values of the data matrix. One can consider following error approximations for least squares:
\begin{enumerate}
    \item 
    \begin{align}
    \label{eq:erroapxlsqforth} \sigma_{n}(EX_{-}^{*}) \sqrt{\sum_{j=1}^{n} \frac{1}{\sigma_{j}^{4}(X_{-})}} \leq \big\|A-\hat{A} \big\|_{F} \leq \sigma_{1}(EX_{-}^{*}) \sqrt{\sum_{j=1}^{n} \frac{1}{\sigma_{j}^{4}(X_{-})}},
\end{align}
    \item 
    \begin{align}
    \label{eq:errorap2mom}
     \sqrt{\sum_{j=1}^{n} \frac{1}{n\sigma_{j}^{2}(X_{-})}} \leq \big\|A-\hat{A} \big\|_{F} \leq  \sqrt{\sum_{j=1}^{n} \frac{n}{\sigma_{j}^{2}(X_{-})}}
    \end{align}
\end{enumerate}
Most important takeaway is that we need to compute negative moments of the sample covariance matrix to really understand performance of least squares. Which is a very difficult task, but if we can somehow get really tight estimate of all the singular values then there is hope. We will first focus on the extreme singular values of $X_{-}$ and then come to bulk and in the end we will analyse extreme singular values of $EX_{-}^{*}$.
\section{Statistics of the Largest Eigenvalue}
\label{sec:spst}
Extreme singular values(largest and smallest singular values) of the data matrix can act as a sanity check for the performance of OLS. They also play a crucial role in numerical linear algebra(see e.g., \cite{vu2007condition} for more details). For rectangular matrices with i.i.d entries(centered and unit variance) everywhere as in Gaussian ensemble $E$, statistics of $\sigma_{1}(E)$ and $\sigma_{n}(E)$ are well known and tend to concentrate around $\sqrt{N}+\sqrt{n}$ and $\sqrt{N}-\sqrt{n}$ respeectively. Understanding upper tail behavior of $\sigma_{1}(E)$ requires discretization of $S^{n-1}$. 
\begin{definition}
 (epsilon net of $S^{n-1}$) Finite subset, $\mathcal{N}_{\epsilon}(S^{n-1})$ called epsilon net of $n$ dimensional sphere- with the property that given any $a \in S^{n-1}$ there exists  $\hat{a} \in \mathcal{N}_{\epsilon}(S^{n-1})$ such that $\|a - \hat{a}\|_{2} \leq \epsilon$   
\end{definition}

\begin{lemma}
    Metric entropy of sphere is exponential w.r.t dimension of the underlying state space, precisely for any $\epsilon \in (0,1] $:
    \begin{equation}
        |\mathcal{N}_{\epsilon}(S^{n-1})| \leq \bigg( \frac{3}{\epsilon}\bigg)^{n}.
    \end{equation}
\end{lemma}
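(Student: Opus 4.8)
The plan is to produce the net by a greedy packing (volumetric) argument rather than by exhibiting one explicitly. First I would let $\mathcal{N}_{\epsilon}(S^{n-1})=\{a_1,\dots,a_M\}$ be a \emph{maximal} $\epsilon$-separated subset of $S^{n-1}$, i.e.\ a set with $\|a_i-a_j\|_2>\epsilon$ for all $i\neq j$ that cannot be enlarged while preserving this property. Maximality immediately forces it to be an $\epsilon$-net: if some $a\in S^{n-1}$ had $\|a-a_i\|_2>\epsilon$ for every $i$, we could adjoin $a$ to the set, contradicting maximality. Hence it suffices to bound the cardinality $M$.

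Next I would exploit the separation to get disjointness of small balls. Since $\|a_i-a_j\|_2>\epsilon$, the open Euclidean balls $B(a_i,\epsilon/2)\subset\mathbb{R}^n$ are pairwise disjoint, and because each center $a_i$ lies on the unit sphere, every such ball is contained in $B(0,1+\epsilon/2)$. Comparing Lebesgue volumes in $\mathbb{R}^n$ and using disjointness and containment,
\[
M\cdot\mathrm{vol}\big(B(0,\epsilon/2)\big)\;=\;\mathrm{vol}\Big(\textstyle\bigsqcup_{i=1}^{M}B(a_i,\epsilon/2)\Big)\;\leq\;\mathrm{vol}\big(B(0,1+\epsilon/2)\big).
\]

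Finally, using the scaling $\mathrm{vol}\big(B(0,r)\big)=r^{\,n}\,\mathrm{vol}\big(B(0,1)\big)$, the volume of the unit ball cancels and
\[
M\;\leq\;\bigg(\frac{1+\epsilon/2}{\epsilon/2}\bigg)^{\!n}\;=\;\bigg(\frac{2+\epsilon}{\epsilon}\bigg)^{\!n}\;\leq\;\bigg(\frac{3}{\epsilon}\bigg)^{\!n},
\]
where the last step uses the hypothesis $\epsilon\in(0,1]$, so that $2+\epsilon\leq 3$. This yields the claimed bound on $|\mathcal{N}_{\epsilon}(S^{n-1})|$.

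There is essentially no hard part here; the only points requiring care are (i) recording that maximality of an $\epsilon$-separated set yields the covering property \emph{for free}, so that no net need ever be constructed, and (ii) noting that the correct enclosing ball has radius $1+\epsilon/2$ rather than $1$, since the radius-$\epsilon/2$ balls around points of $S^{n-1}$ protrude slightly outside the sphere — this is precisely what turns the naive bound $(2/\epsilon)^n$ into $(3/\epsilon)^n$ and is where the assumption $\epsilon\leq 1$ is consumed. (Alternatively one could pack inside the unit ball directly; the argument above already gives the stated constant.)
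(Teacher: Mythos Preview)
Your argument is correct and is the standard volumetric packing proof of this bound. Note, however, that the paper does not actually supply a proof of this lemma: it is stated as a known fact and immediately used in the subsequent $\epsilon$-net union-bound estimate for $\sigma_1(E)$. Your write-up therefore fills in a detail the paper omits rather than offering an alternative to an existing proof; there is nothing to compare against.
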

\label{sec:largest_Sing}
 Recall, $\sigma_{1}(E)=\sup_{a \in S^{n-1}} \|E^{*}a\|$ but $E$ is random and one does not know in advance for which $a^{'} \in S^{n-1}$ supremum is achieved, neither we can upper bound $P( \sup_{a \in S^{n-1}} \|E^{*}a\| > \delta)$ via union bound as the $S^{n-1}$ is not countable. To circumvent this issue, one can discretize $S^{n-1}$ into finite subset, $\mathcal{N}_{\epsilon}(S^{n-1})$ when combined with triangle inequality, reveals: 
 \begin{equation}
     \sigma_{1}(E) \leq \bigg(\frac{1}{1-\epsilon}\bigg) \sup_{a \in \mathcal{N}_{\epsilon}(S^{n-1})} \|E^{*}a\|.
 \end{equation}
 Therefore,
\begin{align}
      \nonumber & \mathbb{P}\bigg( \sup_{a \in S^{n-1}} \|E^{*}a\| \geq \delta \bigg) \leq \mathbb{P}\bigg( \sup_{a \in \mathcal{N}_{\epsilon}(S^{n-1})} \|E^{*}a\| \geq (1-\epsilon) \delta \bigg) \\ & \label{eq:typpoint} \leq \sum_{a \in \mathcal{N}_{\epsilon}(S^{n-1})} \mathbb{P}\bigg(\|E^{*}a\| \geq (1-\epsilon)\delta \bigg) \leq \bigg(\frac{3}{\epsilon}\bigg)^{n} \mathbb{P}\bigg( \|z_{N}\|^{2} \geq (1-\epsilon)^{2} \delta^{2} \bigg),  
\end{align}
where, first inequality in \eqref{eq:typpoint} follows from union bound and second from metric entropy of $n$ dimensional unit sphere. Notice that: \emph{regardless of exact realization of `$a$' on $n$ dimensional unit sphere}, $E^{*}a=z_{N} \thicksim N(0,I_{N})$ and a simple exponential moment calculation on the last expression of \eqref{eq:typpoint} will reveal the correct behavior $\sigma_{1}(E)$. Similar discretization approaches have been considered in literature for understanding statistics of extreme singular values of the data matrix. However, as suggested by authors work in \cite{naeem2023spectral} on spectral theorem for non-Hermitian linear operators, discrepancy between algebraic and geometric multiplicities of distinct eigenvalues of $A$ add structure to data matrix and naive discretization will be wasteful; except for Hermitian and all the same eigenvalues. Furthermore, rows of the data matrix  $X_{-}$ can be divided into blocks, independent of each other and concentration of $\|X_{-}^{*}a\|_{2}$ for some $a \in S^{n-1}$ can be better understood by also restricting $a$ onto invariant subspaces $[M_{\lambda_{i}}]_{i \in [K]}$  of $A$, which we now explore in further detail. 
  
\subsection{Lower bound via typical size of rows}
Notice that,
\begin{align}
    \label{eq:epnet} &\mathbb{P}\bigg( \|X_{-}^{*}a\| \geq \delta  \bigg) =  \mathbb{P}\Bigg( \big\|\sum_{m=1}^{K}X_{-}^{*}P_{\lambda_{m}}a\big\|^2 \geq \delta^{2}  \Bigg), 
\end{align}
where $[X_{-}^{*}P_{\lambda_{m}}a]_{m=1}^{K}$ are independent of each other: orthogonal projections of Gaussians are independent. Furthermore, OLS error in Frobenius norm is independent of choice of the basis, so w.l.o.g we can assume $X_{-}^{*}P_{\lambda_{m}}a \in \mathbb{R}^{N^{\otimes D_{\lambda_{m}}^{+}}}$, where we use shorthand $D_{\lambda_{m}}^{+}:=(D_{\lambda_{m}}+1)$ and recall $\sum_{m=1}^{K}D_{\lambda_{m}}^{+}=n$. , for each $m \in [K]$ let $S_{m}(\lambda):= \sum_{i=1}^{m-1}D_{\lambda_{i}}^{+}$
\begin{equation}
P_{\lambda_{m}}a:=\big(0,\ldots,0,a_{S_{m}(\lambda)+1},\ldots,a_{S_{m}(\lambda)+D_{\lambda_{m}}^{+}},0,\ldots,0\big)    
\end{equation}
Recall that $\sum_{i=1}^{N} [X_{-}]_{j,i} =  \sum_{i=1}^{N} \bigg[ \sum_{t=1}^{i} \langle A^{i-t}w_{t-1},e_{j}\rangle  \bigg]$, therefore:  
\begin{align}
     X_{-}^{*}P_{\lambda_{m}}a= \bigg[a_{S_{m}(\lambda)+1} \overline{[X_{-}]}_{[S_{m}(\lambda)+1,:]}, a_{S_{m}(\lambda)+2} \overline{[X_{-}]}_{[S_{m}(\lambda)+2,:]}, \ldots, a_{S_{m}(\lambda)+D_{\lambda_{m}}^{+}} \overline{[X_{-}]}_{[S_{m}(\lambda)+D_{\lambda_{m}}^{+},:]} \bigg],
\end{align}
where we used $:$ to denote all the columns of the data matrix. For each time $t \in \mathbb{N}$, $w_{t}^{m} \in N \big(0,I_{D_{\lambda_{m}}^{+}}\big)$ i.e., i.i.d isotropic Gaussian of dimension $D_{\lambda_{m}}^{+}$. Hence, for $i \in [N]$ and $l \in [D_{\lambda_{m}}^{+}]$:
\begin{align}
    & \nonumber \overline{[X_{-}]}_{[S_{m}(\lambda)+l,i]}= \sum_{t=1}^{i} \overline{\langle A_{\lambda_{m}}^{i-t}w_{t-1}^{m},e_{l}\rangle}=\sum_{t=1}^{i} \langle e_{l} ,A_{\lambda_{m}}^{i-t}w_{t-1}^{m}\rangle= \sum_{t=1}^{i} \langle e_{l} ,(\lambda_{m}I+N)^{i-t}w_{t-1}^{m}\rangle \\ & \label{eq:indblockdynwillleadtolargsing} =\sum_{t=1}^{i} \sum_{p=0}^{i-t} \binom{i-t}{p} \overline{\lambda_{m}^{i-t-p}}  \langle e_{l+p} , w_{t-1}^{m}\rangle= \sum_{t=1}^{i} \sum_{p=0}^{(i-t) \land (D_{\lambda_{m}}^{+}-l)} \binom{i-t}{p} \overline{\lambda_{m}^{i-t-p}}  \langle e_{l+p} , w_{t-1}^{m}\rangle
\end{align}
where $A_{\lambda_{m}}^{i-t}$ is a Linear transformation from and onto an $D_{\lambda_{m}}^{+}$ dimensional $A$- invariant subspace, i.e., Jordan block of size $D_{\lambda_{m}}^{+}$ with eigenvalue $\lambda_{m}$  Consequently,
\begin{align}
     \label{eq:sig1iidblock} \|X_{-}^{*}P_{\lambda_{m}}a\|^2 & = \sum_{i=1}^{N} \Bigg(\sum_{l=1}^{D_{\lambda_{m}}^{+}} \sum_{t=1}^{i} a_{S_{m}(\lambda)+l} \langle e_{l} ,A_{\lambda_{m}}^{i-t}w_{t-1}^{m}\rangle \Bigg)^{2} \\ & = \sum_{i=1}^{N} \Bigg(\sum_{l=1}^{D_{\lambda_{m}}^{+}} a_{S_{m}(\lambda)+l} \sum_{t=1}^{i}  \sum_{p=0}^{(i-t) \land (D_{\lambda_{m}}^{+}-l)} \binom{i-t}{p} \overline{\lambda_{m}^{i-t-p}}  \langle e_{l+p} , w_{t-1}^{m}\rangle \Bigg)^{2}.
\end{align}
Therefore,
\begin{align}
     & \nonumber \mathbb{P}\bigg( \big\|X_{-}^{*}a\big\| \geq \delta \bigg)= \mathbb{P}\bigg(  \sum_{m=1}^{K} \big\|X_{-}^{*}P_{\lambda_{m}}a \big\|^2 \geq \delta^{2}  \bigg)\\ & \label{eq:spdecompsig1}  \leq 
     e^{-s\delta^{2}}  \prod_{m=1}^{K} \mathbb{E}\bigg[e^{s\sum_{i=1}^{N}\big(\sum_{l=1}^{D_{\lambda_{m}}^{+}} a_{S_{m}(\lambda)+l} \sum_{t=1}^{i}  \sum_{p=0}^{(i-t) \land (D_{\lambda_{m}}^{+}-l)} \binom{i-t}{p} \overline{\lambda_{m}^{i-t-p}}  \langle e_{l+p} , w_{t-1}^{m}\rangle \big)^{2}}\bigg],  
\end{align}
where inequality \eqref{eq:spdecompsig1} follows from Markov inequality combined with independence of $[X_{-}^{*}P_{\lambda_{m}}a]$ for each $m \in [K]$(spatial independence between row blocks of the data matrix). Now notice that if $[\lambda_{m}]_{m \in [K]}$ are all positive, then  for each $m \in [K]$, and $i \in [N]$ sufficiently large with $l$ as variable,
\begin{equation}
    \label{eq:largvarblock}
    a_{S_{m}(\lambda)+l}\sum_{t=1}^{i}  \sum_{p=0}^{(i-t) \land (D_{\lambda_{m}}^{+}-l)} \binom{i-t}{p} \overline{\lambda_{m}^{i-t-p}}  \langle e_{l+p} , w_{t-1}^{m}\rangle,
\end{equation}
will have the largest typical size for $l=1$, so supremum over all $a \in S^{D_{\lambda_{m}}}$ for
\newline
\vspace{2pt}
$\big(\sum_{l=1}^{D_{\lambda_{m}}^{+}} a_{S_{m}(\lambda)+l} \sum_{t=1}^{i}  \sum_{p=0}^{(i-t) \land (D_{\lambda_{m}}^{+}-l)} \binom{i-t}{p} \overline{\lambda_{m}^{i-t-p}}  \langle e_{l+p} , w_{t-1}^{m}\rangle \big)^{2}$, would be in fact  
\begin{equation}
    \label{eq:sparsenettoprow}
  \bigg(\sum_{t=1}^{i}  \sum_{p=0}^{(i-t) \land (D_{\lambda_{m}}^{+}-l)} \binom{i-t}{p} \overline{\lambda_{m}^{i-t-p}}  \langle e_{l+p} , w_{t-1}^{m}\rangle\bigg)^{2}. 
\end{equation}
So when discrepancies of eigenvalues are large typical size of the row should be a good approximate of $\sigma_{1}(X_{-})$.
Thus, at least when rows are correlated we do not need to discretize the unit sphere. 
Also notice that 
\begin{align}
     \label{eq:expro}P \bigg( \sigma_{1}(X_{-}) \leq \delta \bigg)= P \bigg( \sup_{a \in S^{n-1}} \|X_{-}^{*}a\| \leq \delta \bigg)= P \bigg( \sup_{a \in S^{n-1}} \big\|\sum_{j=1}^{n}a_{j}y_{j}\big\| \leq \delta \bigg)  \leq P\bigg( \max_{j \in [n]} \|y_{j} \|_{2} \leq \delta \bigg), 
\end{align}
where $[y_{j}]_{j=1}^{n}$ are the rows of the data matrix as in Section \ref{sec:OLSintro} and if $k=\arg \max_{j \in [n]}\|y_{j}\|$ then last inequality follows by setting $a_{k}=1$ and $a_{j}=0$ for $j \neq k$.
\begin{remark}
If the typical size of any row of the data matrix is `large' then so is the magnitude of $\sigma_{1}(X_{-})$. Typical size of the row is essentially dictated by non-Hermitian variant of the spectral theorem which define the weighted constants on standard normals that populate any given row of the data matrix. 
\subsection{Covariates from stable dynamics can suffer from the curse of dimensionality}
\label{sec:spatio-temp}
It was noted in \cite{naeem2023spectral}, for the case of dynamics correspodning to S-w-SSCs, each covariate in $j-$ th row of the data matrix is a weighted sum of i.i.d standard normals that make up rows $[j, \ldots, n]$ of Gaussian Ensemble $E$. So as one would suspect that first row is most susceptible to having the largest typical size. Which turns out to be exponential in dimension of the state space    
\end{remark}

\begin{proposition}
    \label{lm:cov-ul-bd}
    $N > n$ and $\lambda \in (\frac{1}{2},1)$ then typical order of first row of the data matrix corresponding to dynamics generated from $n$ dimensional S-w-SSCs is exponential in $n$, precisely said
\begin{flalign}
 & \label{eq:covariatecursed} \|y_1\| \geq \Omega\bigg(\bigg\lfloor \frac{N}{n} \bigg\rfloor^{\frac{1}{2}}  e^{\frac{\alpha_{\lambda}n}{2}} \bigg) 
\end{flalign}
where $\alpha_{\lambda}$ entirely depends on $\lambda$.
\end{proposition}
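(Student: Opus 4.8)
\emph{Step 1: the row as a Gaussian with explicit covariance.} I would first record that $y_1:=X_-^{*}e_1$ is a single centered Gaussian vector with a covariance I can write down. Specializing \eqref{eq:indblockdynwillleadtolargsing} to $l=1$ (the first row, with $A=J_n(\lambda)$),
\[
 \langle e_1,x_i\rangle=\sum_{t=1}^{i}\ \sum_{p=0}^{(i-t)\wedge(n-1)}\binom{i-t}{p}\,\lambda^{\,i-t-p}\,\langle e_{1+p},w_{t-1}\rangle .
\]
The scalars $\langle e_{1+p},w_{t-1}\rangle$ are i.i.d.\ $\mathcal N(0,1)$ over all admissible pairs $(p,t)$ (distinct coordinates of a noise vector for distinct $p$, independent noise vectors for distinct $t$), so each $\langle e_1,x_i\rangle$ is centered Gaussian and, after the substitution $m=i-t$,
\[
 \mathrm{Var}\!\big(\langle e_1,x_i\rangle\big)=\sum_{m=0}^{i-1}V(m),\qquad V(m):=\sum_{p=0}^{m\wedge(n-1)}\binom{m}{p}^{2}\lambda^{2(m-p)} .
\]
Hence $y_1\sim\mathcal N(0,\Sigma)$ with $\Sigma\in\mathbb{R}^{N\times N}$, $\Sigma_{ii}=\mathrm{Var}(\langle e_1,x_i\rangle)$, and $\|y_1\|^{2}=\sum_i\langle e_1,x_i\rangle^{2}$.

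\emph{Step 2: the exponential lower bound and the half--disc threshold.} The core of the argument is to show $\Sigma_{ii}$ is exponentially large in $n$ for $i$ not too small. I would read off $V$ at the window $m=2(n-1)$, keep only the central term $p=n-1$, and use $\binom{2k}{k}=\prod_{j=1}^{k}\tfrac{k+j}{j}\ge 2^{k}$:
\[
 V\!\big(2(n-1)\big)\ \ge\ \binom{2(n-1)}{n-1}^{2}\lambda^{2(n-1)}\ \ge\ (2\lambda)^{2(n-1)} .
\]
Since $2(n-1)\le i-1$ whenever $i\ge 2n-1$, this yields $\Sigma_{ii}\ge(2\lambda)^{2(n-1)}$ for every $i\ge 2n-1$. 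The base $2\lambda$ exceeds $1$ \emph{exactly} when $\lambda>\tfrac12$, which is precisely why the blow--up appears only once the eigenvalue leaves the half unit disc; accordingly I set $\alpha_\lambda:=2\ln(2\lambda)=\ln(4\lambda^{2})>0$. (A saddle--point evaluation of $\sum_m V(m)$, whose summand $\binom{m}{n-1}^2\lambda^{2(m-n+1)}$ peaks near $m^{\ast}\approx(n-1)/|\ln\lambda|$, gives the sharper rate $\alpha_\lambda=2\ln\frac{1}{\lambda|\ln\lambda|}$ on $(1/e,1)$, but the crude bound above already does the job and produces the cleaner constant.)

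\emph{Step 3: from the trace to the typical size.} Assuming $N\ge 2n$ (the narrow range $n<N<2n$ needs only the analogous, easier estimate at the window $m=n-1$, where $V(n-1)$ is already exponential in $n$), I sum the diagonal:
\[
 \mathbb{E}\|y_1\|^{2}=\mathrm{tr}\,\Sigma\ \ge\ \sum_{i=2n-1}^{N}(2\lambda)^{2(n-1)}\ \ge\ (N-2n+2)\,e^{\alpha_\lambda(n-1)}\ =\ \Omega\!\big(\lfloor N/n\rfloor\, e^{\alpha_\lambda n}\big).
\]
Finally I would upgrade this expectation to a typical--size statement. Writing $y_1=\Sigma^{1/2}g$ with $g$ i.i.d.\ standard Gaussian, $\|y_1\|^{2}=g^{*}\Sigma g$ is a quadratic form, so $\mathbb{E}\|y_1\|^{4}=(\mathrm{tr}\,\Sigma)^{2}+2\,\mathrm{tr}(\Sigma^{2})$, and $\Sigma\succeq0$ forces $\mathrm{tr}(\Sigma^{2})\le(\mathrm{tr}\,\Sigma)^{2}$; Paley--Zygmund then gives $\mathbb{P}\big(\|y_1\|^{2}\ge\tfrac12\mathrm{tr}\,\Sigma\big)\ge\frac{(1/2)^{2}}{3}=\frac1{12}$. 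Hence $\|y_1\|$ has typical size at least $\sqrt{\tfrac12\mathrm{tr}\,\Sigma}=\Omega\!\big(\lfloor N/n\rfloor^{1/2}e^{\alpha_\lambda n/2}\big)$, as claimed.

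\emph{Where the difficulty sits.} The only genuinely delicate point is Step~2: choosing the window $m$ at which $V(m)$ is read off and extracting a clean exponential base from a ratio of binomial coefficients against a geometric factor (this is also where the constant $\tfrac12$ is born, and where a sharper $\alpha_\lambda$ could be squeezed out at the cost of a messier formula). The probabilistic step is soft --- in particular it does \emph{not} require taming the strong temporal correlations among the coordinates of $y_1$, since the trivial estimate $\mathrm{tr}(\Sigma^{2})\le(\mathrm{tr}\,\Sigma)^{2}$ already suffices for a typical--size lower bound; a fully high--probability, constants--explicit version would instead need the (approximately banded, bandwidth $O(n)$) structure of $\Sigma$.
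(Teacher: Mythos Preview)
Your proof is correct and shares the paper's core strategy: compute the variance of individual coordinates of $y_1$, extract an exponential-in-$n$ lower bound from a binomial–coefficient estimate (the base $4\lambda^{2}$ appears in both, whence the threshold $\lambda>\tfrac12$), and aggregate over coordinates to control $\|y_1\|^{2}$.

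The tactical choices differ. The paper evaluates the variance at the single entry $i=n$, bounds $\sum_{m}\binom{n-k}{m}^{2}\lambda^{2(n-k-m)}$ via a Stirling-type estimate on the central binomial, and then passes to the full row by an informal ``periodic with period $n$'' claim. You instead read off $V(m)$ at the window $m=2(n-1)$, keep only the term $p=n-1$, and use the elementary $\binom{2k}{k}\ge 2^{k}$; this gives a uniform lower bound on $\Sigma_{ii}$ for every $i\ge 2n-1$ and lets you sum directly, which is cleaner than the periodicity step and in fact delivers the stronger trace bound $\Omega(N\,e^{\alpha_\lambda n})$. Your explicit Paley--Zygmund step also supplies the expectation-to-typical-size conversion that the paper leaves implicit. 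One minor caveat: in the narrow range $n<N<2n$ your fallback to $V(n-1)$ produces base $2\lambda$ rather than $(2\lambda)^{2}$, so $\alpha_\lambda$ effectively halves there; the paper is equally informal on this edge, and since $\lfloor N/n\rfloor=1$ in that regime the discrepancy is immaterial to the stated bound.
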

\begin{proof}
 Let $[\hat{e}_{i}]_{i \in N}$ be the canonical basis of $\mathbb{R}^{N}$. As, $j-$ th row and $i-$ th column of the data matrix corresponding to S-w-SSCs case can be compactly written as:
\begin{equation}
\label{eq:inpdatswsscs}
 \langle y_{j}, \hat{e}_{i} \rangle= \sum_{t=1}^{i} \sum_{m=0}^{(i-t) \land (n-j) } \binom{i-t}{m} \lambda^{i-t-m} \big\langle w_{t-1} ,e_{m+j} \big \rangle.   
\end{equation}
Thus revealing: $\langle y_{1},\hat{e}_{n}\rangle$ is a weighted sum of independent $\frac{n(n+1)}{2}$ standard normals, more precisely:
\begin{align}
       \nonumber \langle y_{1},\hat{e}_{n}\rangle&=\sum_{t=1}^{n} \sum_{m=0}^{(n-t)}\binom{n-t}{m}\lambda^{(n-t-m)}\langle w_{t-1} , e_{m+1}\rangle
\end{align}
which is centered with variance: 
\begin{equation}
    \mathbb{E}\big(\langle y_{1},\hat{e}_{n}\rangle^2\big)=\sum_{k=1}^{n}\sum_{m=0}^{n-k} \binom{n-k}{m}^{2} \lambda^{2(n-k-m)}
\end{equation}
upper and lower bounded via Stirling type approximation:
\begin{equation}
   4^{n}\lambda^{2n} \sum_{k=1}^{n}\frac{1}{4^{k}\lambda^{2k} \sqrt{\pi(n-l+\frac{1}{3})}} \leq \mathbb{E}\big(\langle y_{1},\hat{e}_{n}\rangle^2\big) \leq 4^{n}\lambda^{2n}\sum_{k=1}^{n} \frac{1}{4^{k}\lambda^{2k}\sqrt{\pi(n-l+\frac{1}{4})}}
\end{equation}
Let $L_{\lambda,n}(1):=\sum_{k=1}^{n}\frac{1}{4^{k}\lambda^{2k} \sqrt{\pi(n-k+\frac{1}{3})}}$ and $U_{\lambda,n}(1):=\sum_{k=1}^{n}\frac{1}{4^{k}\lambda^{2k} \sqrt{\pi(n-k+\frac{1}{4})}}$
So as soon as $\lambda>\frac{1}{2}$, $n-$ th element of first row has a typical size which is exponential in dimension of the state space. But this trend is periodic: with period $n$, so lower bound follows. 
\end{proof}
\begin{figure} [!t]
\begin{center}
\includegraphics[width=0.75\textwidth]{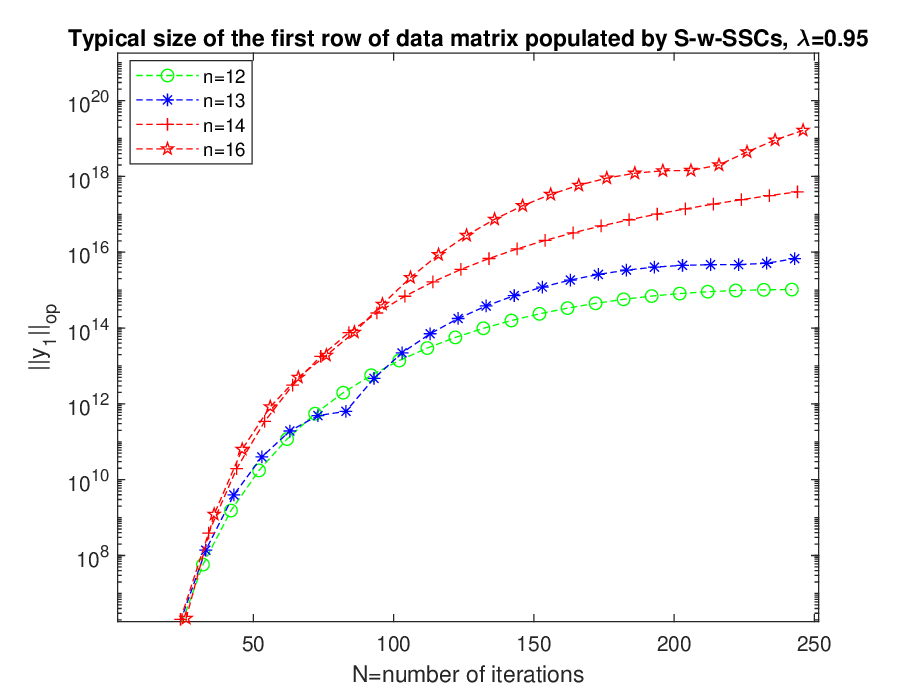}     
\caption{$\ell_{2}$ norm of the data matrixs' first row populated by $n-$ dimensional $(n=12,13,14,16)$ S-w-SSCs(with $\lambda=0.95$) suffers from curse of dimensionality}  
\label{fig:curseofdimtypsizerowone}
\end{center}                
\end{figure}
\begin{figure} [!t]
\begin{center}
\includegraphics[width=0.75\textwidth]{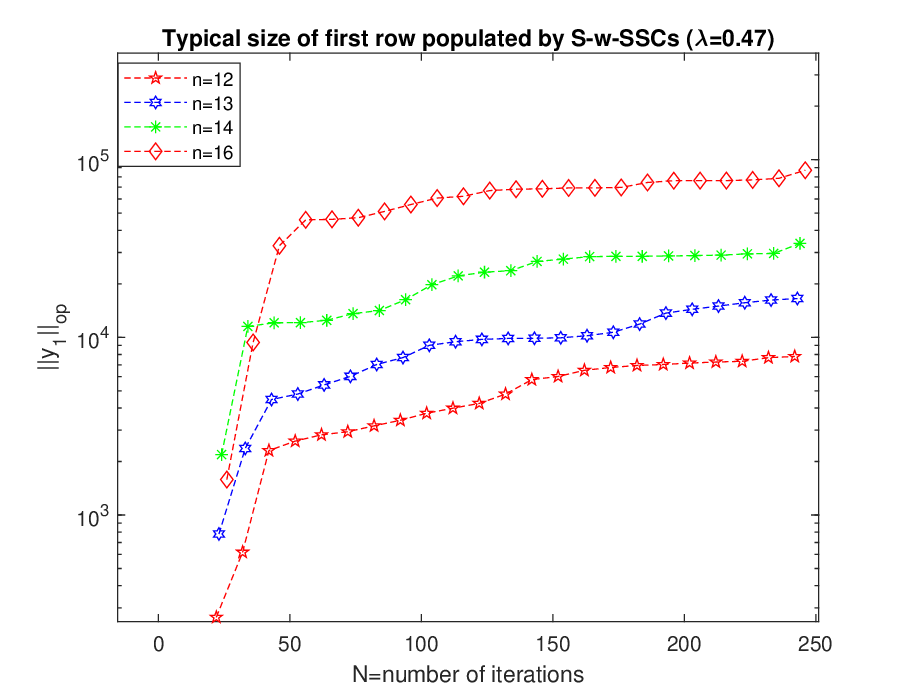}     
\caption{Estimates in Proposition \ref{lm:cov-ul-bd} are optimal: $\ell_{2}$ norm of the data matrixs' first row populated by $n-$ dimensional $(n=12,13,14,16)$ S-w-SSCs(with $\lambda=0.47$) do not suffer from curse of dimensionality } 
\label{fig:nocurserow1}
\end{center}                
\end{figure}
\begin{figure} [!t]
\begin{center}
\includegraphics[width=0.75\textwidth]{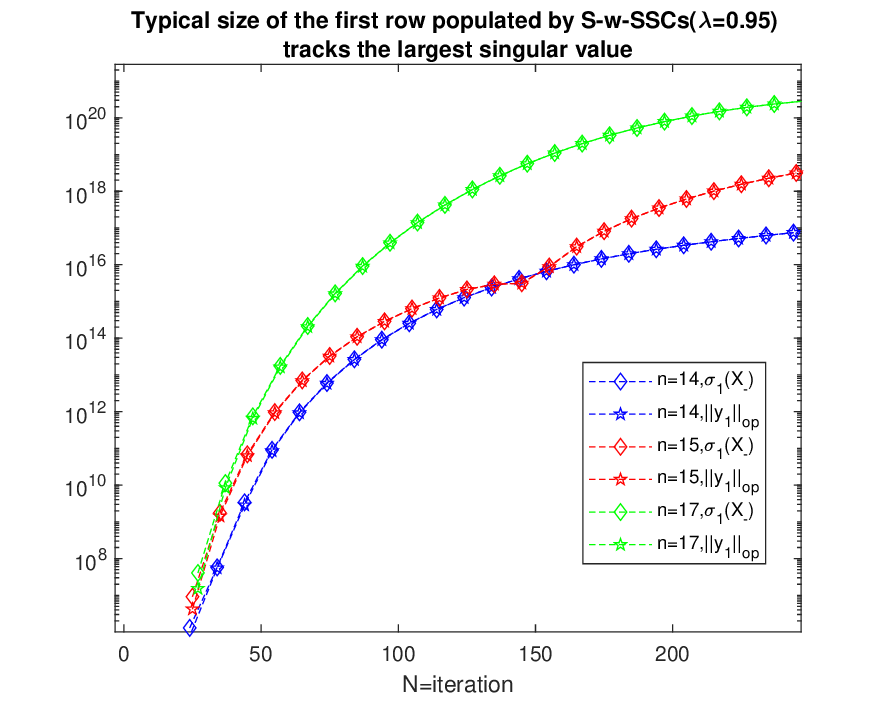}     
\caption{Largest singular value and $\ell_{2}$ norm of the data matrixs' first row populated by $n-$ dimensional $(n=14,15,17)$ S-w-SSCs(with $\lambda=0.95$) are the same} 
\label{fig:rowtrackssig1}
\end{center}                
\end{figure}

\begin{corollary}
    \label{thm:sig_1_exp_n}
    Given $\lambda \in (\frac{1}{2},1)$, almost surely  largest singular of data matrix generated from $n-$ dimensional S-w-SSCs with eigenvalue $\lambda$ suffers from curse of dimnensionality, more precisely:
    \begin{equation}
     \sigma_{1}(X_{-}) \geq \Omega\bigg(\bigg\lfloor \frac{N}{n} \bigg\rfloor^{\frac{1}{2}}  e^{\frac{\alpha_{\lambda}n}{2}} \bigg)
    \end{equation}    
    
\end{corollary}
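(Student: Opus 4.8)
The deterministic skeleton is immediate: by the Courant--Fischer formula, $\sigma_{1}(X_{-}) = \sup_{a \in \mathcal{S}^{n-1}} \|X_{-}^{*}a\|$, so testing against the canonical vector $a = e_{1}$ gives $\sigma_{1}(X_{-}) \geq \|X_{-}^{*}e_{1}\| = \|y_{1}\|$ pointwise on the underlying probability space; this is exactly the chain \eqref{eq:expro} read in the reverse direction. Hence the corollary reduces to (i) the lower bound on $\|y_{1}\|$ supplied by Proposition \ref{lm:cov-ul-bd}, which we may assume, and (ii) upgrading that ``typical size'' statement to an almost-sure lower bound along a single trajectory.

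For step (ii), the plan is to split $\|y_{1}\|^{2} = \sum_{i=1}^{N} \langle y_{1}, \hat{e}_{i}\rangle^{2}$ and keep only the ``resonant'' indices. From the variance computation inside the proof of Proposition \ref{lm:cov-ul-bd}, $\mathbb{E}\langle y_{1},\hat{e}_{i}\rangle^{2}$ is, up to $\mathrm{poly}(n)$ factors, periodic in $i$ with period $n$ and reaches a value of order $4^{n}\lambda^{2n}/\mathrm{poly}(n)$ at the indices $i \in \{n, 2n, 3n, \ldots\}$; writing $\alpha_{\lambda} := \ln(4\lambda^{2}) > 0$ (positive precisely because $\lambda > \tfrac12$, and shrunk slightly if one insists on absorbing the polynomial factor into the exponent), there are $\lfloor N/n\rfloor$ such indices in $[1,N]$, so $\mathbb{E}\|y_{1}\|^{2} \geq \Omega\!\big(\lfloor N/n\rfloor\, e^{\alpha_{\lambda} n}\big)$. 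Since $y_{1}$ is a \emph{linear} image of the isotropic Gaussian vector $(w_{0},\ldots,w_{N-1})$, the map $w \mapsto \|y_{1}(w)\|$ is Lipschitz, so Theorem \ref{thm:dim_ind_tal} together with Remark \ref{rm: lipiid} pins $\|y_{1}\|$ to a deterministic value of order $\sqrt{\lfloor N/n\rfloor}\, e^{\alpha_{\lambda} n/2}$ with exponentially small failure probability; a Borel--Cantelli argument along the sequence of state-space dimensions then converts this to an almost-sure lower bound, and combining with the pointwise inequality $\sigma_{1}(X_{-}) \geq \|y_{1}\|$ closes the proof.

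The genuine obstacle is the quantitative half of step (ii): the Gaussian Lipschitz concentration of Remark \ref{rm: lipiid} is only useful if the Lipschitz constant of $w \mapsto \|y_{1}\|$ is small compared to $\mathbb{E}\|y_{1}\|$, i.e.\ if the ``signal'' $\mathbb{E}\|y_{1}\|^{2}$ dominates the fluctuations of $\|y_{1}\|^{2}$. I expect the cleanest route is to bypass $\|y_{1}\|$ itself and work with the nonnegative quadratic form $Q := \sum_{k=1}^{\lfloor N/n\rfloor} \langle y_{1}, \hat{e}_{kn}\rangle^{2}$ in the independent standard normals populating $E$: since $\|y_{1}\|^{2} \geq Q$, it suffices to show $\mathbb{E}Q = \Omega\!\big(\lfloor N/n\rfloor\, e^{\alpha_{\lambda} n}\big)$ while $\mathrm{Var}(Q) = o\!\big((\mathbb{E}Q)^{2}\big)$, after which a one-sided Chebyshev (or Paley--Zygmund) inequality yields $\|y_{1}\|^{2} \geq \tfrac12\,\mathbb{E}Q$ with probability tending to one. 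The residual bookkeeping --- making $\alpha_{\lambda}$ explicit from the Stirling bounds already recorded in Proposition \ref{lm:cov-ul-bd} and confirming the resonant-index count is exactly $\lfloor N/n\rfloor$ --- is routine.
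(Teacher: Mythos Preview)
Your reduction is exactly the paper's: the corollary is stated immediately after Proposition~\ref{lm:cov-ul-bd} with no separate proof, the intended argument being the pointwise bound $\sigma_{1}(X_{-})\geq\max_{j}\|y_{j}\|\geq\|y_{1}\|$ from \eqref{eq:expro} combined with the proposition's lower bound on $\|y_{1}\|$. Everything you do beyond that --- the Lipschitz-concentration attempt, the diagnosis that the Lipschitz constant of $w\mapsto\|y_{1}\|$ is itself exponential in $n$ so Remark~\ref{rm: lipiid} alone is inconclusive, and the fallback to a second-moment/Paley--Zygmund bound on the resonant sub-sum $Q$ --- is additional rigor that the paper simply does not supply (it uses ``almost surely'' and ``w.p.~1'' informally, in the ``typical size'' sense declared in the preliminaries). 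One small wrinkle: your Borel--Cantelli step ``along the sequence of state-space dimensions'' is misplaced, since the statement is for fixed $n$; if you want a genuine a.s.\ statement you should run Borel--Cantelli in $N$ (or simply report a high-probability bound, which is all the paper's language actually commits to).
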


\section{Eigenvalue Statistics of the Sample covariance Matrix}
\label{sec:eigsmp}
Overwhelming challenge in unraveling the typical order of the eigenvalues associated to the sample covariance matrix(which in turn unravels the fate of OLS) is first establishing the concentration behavior of each individual elements of the sample covariance matrix. 
\begin{gather*}
X_{-}X_{-}^{*}:=
\begin{bmatrix}
     \langle y_{1},y_{1}\rangle &  \ldots & \langle y_{1},y_{k+1} \rangle  &\ldots& \langle y_1,y_{n}\rangle  \\ \vdots & \ldots & \vdots & \ldots & \vdots \\ \langle y_{k+1},y_{1}\rangle &  \ldots & \langle y_{k+1},y_{k+1} \rangle  &\ldots& \langle y_{k+1},y_{n}\rangle \\
     \vdots & \ldots & \vdots & \ldots & \vdots \\
     \langle y_{n},y_{1}\rangle & \ldots & \langle y_{n},y_{k+1} \rangle & \ldots &  \langle y_{n},y_{n}\rangle   
\end{bmatrix}
\end{gather*}
which we will later combine with tools from the perturbation theory to study the localization of all the eigenvalues of the sample covariance matrix.
\subsection{On measure concentration of Sample Covariance matrix} 
\paragraph{Hermitian Case:}
\begin{proposition}
\label{prop:ykyk}
When $A=A^{*}$, stable and w.l.o.g let $\lambda$,$\rho$ be the eigenvalues and $w_t$, $s_t$ be the standard normals corresponding to rows $k$ and $j $, respectively. Then $\langle y_{k},y_{j} \rangle$ is centered (i.e., expectated value is $0$) and there exists positive constants $c_{2,\lambda},c_{4,\lambda},c_{\lambda,\rho},d_{\lambda,\rho},d_{\lambda}$ and $d_{\rho}$ such that: 
\begin{align}
\nonumber &\mathbb{E}(\langle y_{k} , y_{j} \rangle^2)= c_{\lambda,\rho}(N-1)+c_{\lambda,\rho}\bigg[d_{\lambda,\rho}\big(1-o_{(\lambda,\rho);N}(1)\big)-d_{\lambda}\big(1-o_{\lambda;N}(1)\big)-d_{\rho}\big(1-o_{\rho;N}(1)\big) \bigg] \\ \nonumber & \mathbb{E}(\langle y_{k},y_{k}\rangle^{2})=3c_{2,\lambda}^{2}\bigg[(N-1)+c_{4,\lambda}\big(1-o_{\lambda;N}^{2}(1)\big)-2c_{2,\lambda}\big(1-o_{\lambda;N}(1)\big) \bigg].    
\end{align}
where given $|\lambda|<1$, $o_{\lambda;N}(1)$ is used to denote a term vanishing to $0$ as $N$ approaches infinity. 
\end{proposition}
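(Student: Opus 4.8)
The plan is to reduce everything to explicit finite geometric sums via the covariance structure of scalar stable ARMA chains. Since $A=A^{*}$ is stable, write $A=U^{*}\Omega U$ with $U$ unitary and $\Omega$ real diagonal; as observed in Section~\ref{sec:OLSintro}, the OLS error is invariant under this change of basis and the rows of $Z_{-}:=UX_{-}$ are mutually independent one-dimensional stable Gaussian recursions. Working in this basis, row $k$ is $[y_{k}]_{i}=\sum_{t=1}^{i}\lambda^{i-t}w_{t-1}$ and row $j$ is $[y_{j}]_{i}=\sum_{t=1}^{i}\rho^{i-t}s_{t-1}$, where $\{w_{t}\}$ and $\{s_{t}\}$ are independent families of i.i.d.\ standard normals and $i$ ranges over the $N-1$ nonzero columns (recall $x_{0}=0$). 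Centering is then immediate: $\mathbb{E}\langle y_{k},y_{j}\rangle=\sum_{i}\mathbb{E}[y_{k}]_{i}\,\mathbb{E}[y_{j}]_{i}=0$ by independence and zero mean.

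For the off-diagonal second moment I would first record the exact covariance kernel of a scalar ARMA($\lambda$) chain, $r_{\lambda}(i,i'):=\mathbb{E}\big([y_{k}]_{i}[y_{k}]_{i'}\big)=\lambda^{|i-i'|}\,\frac{1-\lambda^{2(i\wedge i')}}{1-\lambda^{2}}$, obtained by summing the geometric series in its defining double sum, and likewise $r_{\rho}$. Expanding $\langle y_{k},y_{j}\rangle^{2}=\sum_{i,i'}[y_{k}]_{i}[y_{j}]_{i}[y_{k}]_{i'}[y_{j}]_{i'}$ and using independence of $\{w\}$ and $\{s\}$ factors the expectation as $\mathbb{E}\langle y_{k},y_{j}\rangle^{2}=\sum_{i,i'}r_{\lambda}(i,i')\,r_{\rho}(i,i')$. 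The decisive step is to split each kernel into a stationary and a transient part, $r_{\lambda}(i,i')=\frac{\lambda^{|i-i'|}}{1-\lambda^{2}}-\frac{\lambda^{|i-i'|+2(i\wedge i')}}{1-\lambda^{2}}$, multiply the two decompositions out into four pieces, and evaluate each resulting double sum in closed form using $\sum_{d=0}^{M}(N-d)z^{d}$-type identities. The stationary$\times$stationary piece produces the leading $c_{\lambda,\rho}(N-1)$ together with a constant $c_{\lambda,\rho}d_{\lambda,\rho}$ (the boundary correction of $\sum(N-d)(\lambda\rho)^{d}$); the two mixed pieces produce the constants $-c_{\lambda,\rho}d_{\lambda}$ and $-c_{\lambda,\rho}d_{\rho}$; the transient$\times$transient piece and the exponentially small tails $(\lambda\rho)^{N},\lambda^{2N},\rho^{2N}$ get absorbed into the $o_{(\lambda,\rho);N}(1)$, $o_{\lambda;N}(1)$, $o_{\rho;N}(1)$ factors. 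Matching coefficients yields $c_{\lambda,\rho}=\frac{1+\lambda\rho}{(1-\lambda\rho)(1-\lambda^{2})(1-\rho^{2})}$ and the analogous rational expressions for $d_{\lambda,\rho},d_{\lambda},d_{\rho}$.

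For the diagonal second moment the same machinery applies after one extra ingredient: since $([y_{k}]_{1},\dots,[y_{k}]_{N-1})$ is a centered Gaussian vector, Isserlis' (Wick's) formula gives $\mathbb{E}\big([y_{k}]_{i}^{2}[y_{k}]_{i'}^{2}\big)=r_{\lambda}(i,i)\,r_{\lambda}(i',i')+2\,r_{\lambda}(i,i')^{2}$. Summing over $i,i'$ separates $\mathbb{E}\langle y_{k},y_{k}\rangle^{2}$ into $\big(\mathbb{E}\langle y_{k},y_{k}\rangle\big)^{2}$ plus $2\sum_{i,i'}r_{\lambda}(i,i')^{2}$; in the second sum $\lambda$ is replaced by $\lambda^{2}$ in every decay factor, so the identical split-and-sum step gives a term proportional to $(N-1)$, a constant correction governed by $c_{4,\lambda}$, a further constant $-2c_{2,\lambda}$, and vanishing tails $o_{\lambda;N}(1)$ (with doubled exponents $o_{\lambda;N}^{2}(1)$), which after regrouping produces the stated form with $c_{2,\lambda},c_{4,\lambda}$ the stationary second/fourth-type constants of the $\lambda$-chain.

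The computations are entirely deterministic once Isserlis' formula is invoked, so the only real obstacle is bookkeeping: cleanly separating the $O(N)$ stationary contribution from the $O(1)$ transient corrections, tracking which rational constant is which, and checking that every leftover term is genuinely $o(1)$ in $N$ uniformly — which it is, being a finite combination of powers $\lambda^{2N},\rho^{2N},(\lambda\rho)^{N}$ with bases in $(0,1)$. A secondary point of care is that the eigenvalues of $A=A^{*}$ are real, so $\overline{\lambda}=\lambda$ and no conjugation phases survive in the geometric sums; this is exactly what collapses all four pieces to real rational functions of $\lambda$ and $\rho$, and is where the hypothesis $A=A^{*}$ is genuinely used.
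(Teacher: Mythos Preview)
Your proposal is correct and follows the same basic strategy as the paper: reduce to independent scalar ARMA rows via the unitary diagonalization of $A=A^{*}$, then evaluate the second moments as finite geometric sums. The difference is one of completeness. The paper's written proof keeps only the diagonal $i=i'$ terms when expanding $\mathbb{E}\langle y_{k},y_{j}\rangle^{2}$ and $\mathbb{E}\langle y_{k},y_{k}\rangle^{2}$, which yields $c_{\lambda,\rho}=\tfrac{1}{(1-\lambda^{2})(1-\rho^{2})}$ and the corresponding diagonal-only constants; your approach via the full covariance kernel $r_{\lambda}(i,i')$ and Isserlis' formula retains the entire $(i,i')$ double sum and produces the genuinely correct leading coefficient $c_{\lambda,\rho}=\tfrac{1+\lambda\rho}{(1-\lambda\rho)(1-\lambda^{2})(1-\rho^{2})}$. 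Since the off-diagonal $(i\neq i')$ contributions are of the same order $O(N)$ as the diagonal ones, your version is the rigorous one; the paper's sketch captures the right qualitative picture (order $N$ with $O(1)$ transient corrections absorbed into the $o_{\lambda;N}(1)$ terms) but under-counts the leading constant. Your stationary/transient split of each kernel is also cleaner bookkeeping than the paper's direct expansion of $(1-\lambda^{2i})(1-\rho^{2i})$, and makes transparent why every leftover term is a bounded combination of $\lambda^{2N},\rho^{2N},(\lambda\rho)^{N}$.
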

\begin{proof}
Simply notice that:
\begin{align}
    \nonumber & \langle y_{k} , y_{j} \rangle= \sum_{i=1}^{N-1} \bigg(\sum_{t=1}^{i} \lambda^{i-t}w_{t-1}\bigg)\bigg(\sum_{t=1}^{i} \rho^{i-t}s_{t-1}\bigg), \hspace{25pt} \textit{Furthermore}, \hspace{25pt} \mathbb{E} \bigg(\big\langle y_{k} , y_{j} \big \rangle^{2}\bigg) \\ \nonumber &   = \sum_{i=1}^{N-1} \bigg(\sum_{t=1}^{i} \lambda^{2(i-t)}\bigg) \bigg(\sum_{t=1}^{i} \rho^{2(i-t)}\bigg)     =\frac{\sum_{i=1}^{N-1}\big(1-\lambda^{2i}\big)\big(1-\rho^{2i}\big)}{(1-\lambda^{2})(1-\rho^{2})}= \frac{N -\sum_{i=1}^{N-1} (\lambda^{2i}+ \rho^{2i}) +(\lambda \rho)^{2i} }{(1-\lambda^{2})(1-\rho^{2})}.
\end{align}
Diagonal elements are centered at:
\begin{align}
    \mathbb{E}\langle y_{k},y_{k}\rangle=\sum_{i=1}^{N-1}\mathbb{E} \Bigg[\bigg(\sum_{t=1}^{i} \lambda^{i-t}w_{t-1}\bigg)^{2}\Bigg]=\sum_{i=1}^{N-1}\sum_{t=1}^{i}\lambda^{2(i-t)}=c_{2,\lambda}(N-1)-c_{2,\lambda}^{2}\big(1-o_{\lambda;N}(1) \big)
\end{align}
\end{proof}
Now simply recall that: 
\begin{align}
    \mathbb{E}(\langle y_{k},y_{k}\rangle^{2})=\frac{3}{(1-\lambda^2)^{2}}\sum_{i=1}^{N-1} (1-\lambda^{2i})^{2}=\frac{3(N-1)}{(1-\lambda^{2})^{2}}+\frac{3(1-\lambda^{4N})}{(1-\lambda^{2})^{2}(1-\lambda^{4})}-\frac{6(1-\lambda^{2N})}{(1-\lambda^{2})^{2}(1-\lambda^{2})}
\end{align}
Therefore variance of $\langle y_{k} , y_{k}\rangle$
\begin{equation}
3(N-1) c_{2,\lambda}^{2}+3c_{2,\lambda}^{2}c_{4,\lambda}\big(1-o_{\lambda;N}^{2}(1)\big)-6c_{2,\lambda}^{3}\big(1-o_{\lambda;N}(1)\big)-c_{2,\lambda}^{2}(N-1)^{2}-c_{2,\lambda}^{4}\big(1-o_{\lambda;N}(1)\big)^{2}+2c_{2,\lambda}^{3}(N-1)\big(1-o_{\lambda;N}(1)\big)    
\end{equation}
\begin{remark}
\label{rmk:ykyksign}
This implies that $\langle y_{k}, y_{k} \rangle$ is centered at $ c_{2,\lambda}(N-1)-c_{2,\lambda}^{2}\big(1-o_{\lambda;N}(1)\big)$ with fluctuation of size: 
\begin{align}
 \nonumber & \bigg(3(N-1) c_{2,\lambda}^{2}+3c_{2,\lambda}^{2}c_{4,\lambda}\big(1-o_{\lambda;N}^{2}(1)\big)-6c_{2,\lambda}^{3}\big(1-o_{\lambda;N}(1)\big)\\ \nonumber &-c_{2,\lambda}^{2}(N-1)^{2}-c_{2,\lambda}^{4}\big(1-o_{\lambda;N}(1)\big)^{2}+2c_{2,\lambda}^{3}(N-1)\big(1-o_{\lambda;N}(1)\big)\bigg)^{\frac{1}{2}}   
\end{align} 
 On the other hand, now ignoring constants related to eigenvalues $\lambda, \rho$ e.t.c)off-diagonal entries of the sample covariance matrix $(X_{-}X_{-}^{*})$, $\langle y_{k} , y_{j}\rangle \in [-\sqrt{N}, \sqrt{N}]$ w.p 1. and $\langle y_k,y_{k} \rangle \in [N-\sqrt{N}, N+\sqrt{N}]$.   
\end{remark}
\begin{theorem}
    For Hermitian stable case, ignoring all the other parameters besides $n$ and $N$, we have that for all $j \in [n]$:
    \begin{equation}
        N-n\sqrt{N} \leq \lambda_{j}\big( X_{-}X_{-}^{*}\big) \leq N+n\sqrt{N}
    \end{equation}
\end{theorem}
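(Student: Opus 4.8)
The plan is to combine the entrywise second-moment computations of Proposition~\ref{prop:ykyk} (as summarized in Remark~\ref{rmk:ykyksign}) with Gershgorin's circle theorem. Recall that in the Hermitian case the rows $y_1,\dots,y_n$ of $X_-$ are, after the unitary change of basis discussed in Section~\ref{sec:OLSintro}, independent one-dimensional stable Linear Gaussian trajectories, so every entry of $X_-X_-^*$ is a degree-$\le 2$ polynomial in the underlying i.i.d.\ standard normals. First I would upgrade the variance formulas of Proposition~\ref{prop:ykyk} to concentration statements: each diagonal entry $\langle y_k,y_k\rangle$ concentrates around its mean $\Theta(N)$ with fluctuation $O(\sqrt{N})$, and each off-diagonal entry $\langle y_k,y_j\rangle$ is centered with $\mathbb{E}\langle y_k,y_j\rangle^2=\Theta(N)$, hence has typical size $O(\sqrt{N})$. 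Dropping the eigenvalue-dependent constants $c_{\lambda,\rho},c_{2,\lambda},\dots$ as the statement permits, this yields $\langle y_k,y_k\rangle\in[N-\sqrt{N},\,N+\sqrt{N}]$ and $|\langle y_k,y_j\rangle|\le\sqrt{N}$ for $j\neq k$, with high probability.

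Second, I would take a union bound over the $n$ diagonal and $\binom{n}{2}$ off-diagonal entries so that all $O(n^2)$ estimates hold simultaneously on a single event, the (poly)logarithmic cost being absorbed into the implicit constants (alternatively, invoke the method-of-moments machinery of \cite{naeem2023spectral} directly). On this event, apply Gershgorin's theorem: every eigenvalue $\lambda_j(X_-X_-^*)$ lies in $\bigcup_{k=1}^{n} D\!\left(\langle y_k,y_k\rangle,\ \sum_{l\neq k}|\langle y_k,y_l\rangle|\right)$. Each Gershgorin radius is at most $(n-1)\sqrt{N}\le n\sqrt{N}$ and each center lies in $[N-\sqrt{N},\,N+\sqrt{N}]$, so every eigenvalue satisfies
\[
N-\sqrt{N}-(n-1)\sqrt{N}\ \le\ \lambda_j\big(X_-X_-^*\big)\ \le\ N+\sqrt{N}+(n-1)\sqrt{N},
\]
i.e.\ $N-n\sqrt{N}\le\lambda_j(X_-X_-^*)\le N+n\sqrt{N}$, which is the claim.

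The main obstacle is the second step, not the Gershgorin argument: the entrywise bounds must hold \emph{simultaneously}, yet the off-diagonal entries are not Lipschitz functions of the underlying Gaussian vector — they are degree-two Gaussian chaoses — so the clean $T_1$/Lipschitz-concentration tool of Remark~\ref{rm: lipiid} does not apply verbatim. The remedy is standard (decouple and use a Hanson--Wright bound, or control all moments as in \cite{naeem2023spectral}), but this is where the actual work sits. A secondary remark: Gershgorin by itself does not certify tightness of the bound, and the table in the introduction indeed records the eigenvalue spread as $\mathcal{O}(n\sqrt{N})$; if one wanted to argue the lower edge $\lambda_n$ cannot drift much below $N$ one could additionally interlace via Cauchy's theorem against lower-dimensional sub-covariance matrices, but for the stated two-sided bound Gershgorin combined with the entrywise concentration suffices.
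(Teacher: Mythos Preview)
Your proposal is correct and follows exactly the paper's approach: the paper's proof is simply ``Gershgorin's theorem plus Proposition~\ref{prop:ykyk} and the preceding remark,'' which is precisely the entrywise-concentration-then-Gershgorin argument you outline. If anything, your version is more careful than the paper's, since you explicitly flag the union bound and the degree-two-chaos concentration issue that the paper's two-line proof leaves implicit.
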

\begin{proof}
    According to Gershgorins' theorem all the eigenvalues of $X_{-}X_{-}^{*}$ lies inside disc centered at $\langle y_{j},y_{j} \rangle$ with radius $\sum_{k \neq j}^{n} |\langle y_{j} , y_{k} \rangle| $ for some $j \in [n]$. Result follows from proposition \ref{prop:ykyk} and preceding remark. 
\end{proof}
\newline
\paragraph{Non-Hermitian case:}
Now we will study concentration for an extreme non-Hermitian case S-w-SSCs: which allows for many intricate spatial and temporal correlations between elements of the data matrix. Evident from the previous section, giving typical size to the elements of the sample covariance matrix will require painstaking attention to time evolution of the dynamics in every row. Since the rows have a causal structure: i.e., $y_{k}$ is itself a function of $[y_{k+1},\ldots,y_{n}]$ so we will start from $y_{n}$. In this working draft, we managed to extract concentration behavior for the last two rows and it becomes evident how off diagonal terms are quiet large which will lead to transient behavior of OLS, but recursive framework of time evolution of one row in terms of rows beneath it will be extremely helpful in getting concentration for all the entries. Writing the rows of the data in compact form when dynamics correspond to $S-w-SSCs$: 
\begin{align}
 \nonumber & \langle y_{j},\hat{e}_{i} \rangle= \sum_{t=1}^{i} \sum_{m=0}^{(i-t) \land (n-j) } \binom{i-t}{m} \lambda^{i-t-m} \big\langle w_{t-1} ,e_{m+j} \big \rangle \\ & \label{eq:recurrow} \textit{Furthermore,} \hspace{10pt}  \langle y_{n-1},\hat{e}_{i+1} \rangle=\lambda \langle y_{n-1},\hat{e}_i \rangle+\langle w_{i},e_{n-1}\rangle+\langle y_{n},\hat{e}_{i}\rangle \\ \nonumber & \textit{Let,} \hspace{20pt} s_{i}:=\sum_{t=1}^{i} \lambda^{i-t}w_{t-1} 
\end{align}
In order to understand the cross correlation between the rows of the data matrix, we leverage upon  \eqref{eq:recurrow}
\begin{align}
    \nonumber  \langle y_{n-1},y_{n}\rangle &= \sum_{i=1}^{N-1}\bigg(\sum_{t=1}^{i}\lambda^{i-t}\langle w_{t-1},e_{n-1}\rangle + \sum_{t=1}^{i-1} \lambda^{i-t-1} \langle w_{t-1},e_{n}\rangle\bigg)\bigg( \sum_{t=1}^{i} \lambda^{i-t}\langle w_{t-1},e_{n}\rangle  \bigg)\\ \nonumber & =\sum_{i=1}^{N-1}\big(\langle s_{i}, e_{n-1} \rangle+\lambda^{-1}\langle s_{i-1}, e_{n}\rangle\big)\langle s_{i}, e_{n}\rangle \\ \nonumber & = \sum_{i=1}^{N-1}\big(\langle s_{i}, e_{n-1} \rangle+\lambda^{-1}\langle s_{i-1}, e_{n}\rangle\big)\big(\lambda \langle s_{i-1}, e_{n}\rangle + \langle w_{i-1}, e_{n}\rangle \big)\\ \nonumber & = \sum_{i=1}^{N-1}\Bigg[ \sum_{t=1}^{i-1}\lambda^{2(i-t)-1}\langle w_{t-1},e_{n}\rangle^{2}+\langle s_{i},e_{n-1} \rangle \langle s_{i}, e_{n}\rangle   + \langle w_{i-1}, e_{n}\rangle \sum_{t=1}^{i}\lambda^{i-t}\langle w_{t-1},e_{n-1}\rangle  \Bigg]
\end{align}
\begin{align}
    \label{eq:eynyn-1} & \mathbb{E}\langle y_{n-1},y_{n}\rangle= \lambda^{-2} \sum_{i=1}^{N-1}\sum_{t=1}^{i-1}\lambda^{4(i-t)}=\frac{1}{\lambda^{-2}(1-\lambda^{4})} \sum_{i=1}^{N-1}\big(1-\lambda^{4i}\big)= \lambda^{-2}c_{4,\lambda}\bigg[(N-1) -c_{4,\lambda}\big(1-o_{\lambda;N}^{2}(1) \big)\bigg] \\ \nonumber & \big(\mathbb{E}\langle y_{n-1},y_{n}\rangle \big)^2=\lambda^{-4}c_{4,\lambda}^{2}\bigg[(N-1) -c_{4,\lambda}\big(1-o_{\lambda;N}^{2}(1) \big)\bigg]^{2}.
\end{align}
Also notice that for $i'>i$, $s_{i'}=\lambda^{(i'-i)}s_{i}+\sum_{t=i+1}^{i'}\lambda^{i'-t}w_{t-1}$, because: 
\begin{equation}
    s_{i'}=\sum_{t=1}^{i'} \lambda^{i'-t}w_{t-1}=\sum_{t=1}^{i} \lambda^{i'-t}w_{t-1} +\sum_{t=i+1}^{i'}\lambda^{i'-t}w_{t-1}=\lambda^{(i'-i)}\sum_{t=1}^{i} \lambda^{i-t}w_{t-1} +\sum_{t=i+1}^{i'}\lambda^{i'-t}w_{t-1} 
\end{equation}
Now consider:
\begin{align}
   \nonumber & \mathbb{E}\bigg(\sum_{i=1}^{N-1}\langle s_{i}, e_{n-1}\rangle \langle s_{i}, e_{n} \rangle \bigg)^{2}=\sum_{i=1}^{N-1}\mathbb{E}\big[\langle s_{i}, e_{n-1}\rangle^{2}\langle s_{i}, e_{n}\rangle^{2}+2\sum_{i'>i}\langle s_{i}, e_{n-1}\rangle \langle s_{i}, e_{n}\rangle \langle s_{i'}, e_{n-1}\rangle \langle s_{i'}, e_{n}\rangle\big], 
\end{align}
where:
\begin{align}
    \nonumber &\sum_{i'>i} \big\langle \lambda^{(i'-i)}s_{i}+\sum_{t=i+1}^{i'}\lambda^{i'-t}w_{t-1}, e_{n-1} \big\rangle \big\langle \lambda^{(i'-i)}s_{i}+\sum_{t=i+1}^{i'}\lambda^{i'-t}w_{t-1}, e_{n} \big\rangle\\ \nonumber &=\sum_{i'>i}\bigg( \big\langle\lambda^{(i'-i)}s_{i},e_{n-1}\big\rangle+\big\langle \sum_{t=i+1}^{i'}\lambda^{i'-t}w_{t-1},e_{n-1}\big\rangle \bigg)\bigg( \big\langle\lambda^{(i'-i)}s_{i},e_{n}\big\rangle+\big\langle \sum_{t=i+1}^{i'}\lambda^{i'-t}w_{t-1},e_{n}\big\rangle \bigg).
\end{align}
Therefore,
\begin{align}
 \nonumber 2\sum_{i=1}^{N-1} \mathbb{E} \langle s_{i}, e_{n-1}\rangle &\langle s_{i}, e_{n}\rangle\sum_{i'>i}\bigg( \langle\lambda^{(i'-i)}s_{i},e_{n-1}\rangle\langle\lambda^{(i'-i)}s_{i},e_{n}\rangle+\langle\lambda^{(i'-i)}s_{i},e_{n-1}\rangle\langle \sum_{t=i+1}^{i'}\lambda^{i'-t}w_{t-1},e_{n}\rangle \\ \nonumber &  +\langle \sum_{t=i+1}^{i'}\lambda^{i'-t}w_{t-1},e_{n-1}\rangle\langle \lambda^{(i'-i)}s_{i},e_{n}\rangle+\langle \sum_{t=i+1}^{i'}\lambda^{i'-t}w_{t-1},e_{n-1}\rangle \langle \sum_{t=i+1}^{i'}\lambda^{i'-t}w_{t-1},e_{n}\rangle\bigg)   
\end{align}
Simplifies to,
\begin{align}
\nonumber & 2\sum_{i=1}^{N-1}\sum_{i'>i}^{N-1}\lambda^{2(i'-i)} \mathbb{E} \langle s_{i},e_{n-1} \rangle^{2} \mathbb{E} \langle s_{i},e_{n} \rangle^{2}=2\sum_{i=1}^{N-1}\sum_{i'>i}^{N-1}\lambda^{2(i'-i)} \bigg(\sum_{t=1}^{i}\lambda^{2(i-t)}\bigg)^{2}\\ \nonumber &=2c_{2,\lambda}^{2}\sum_{i=1}^{N-1}\big(1-\lambda^{2i}\big)^{2}\sum_{i'>i}^{N-1}\lambda^{2(i'-i)}=2c_{2,\lambda}^{2}\sum_{i=1}^{N-1}\big(1-\lambda^{2i}\big)^{2}\sum_{s=1}^{N-1-i}\lambda^{2s}=2c_{2,\lambda}^{3}\sum_{i=1}^{N-1}\big(1-\lambda^{2i}\big)^{2}\big(1-\lambda^{2(N-i)}\big) \\ \nonumber &=2c_{2,\lambda}^{3}\bigg[ (N-1)+c_{4,\lambda}\big(1-o_{\lambda;N}^{2}(1)\big)-2c_{2,\lambda}\big(1-o_{\lambda;N}(1)\big) \bigg] -2c_{2,\lambda}^{3}\sum_{i=1}^{N-1}\big(1-\lambda^{2i}\big)^2\lambda^{2(N-i)},
\end{align}
where we can further simplify
\begin{align}
    \nonumber & 2c_{2,\lambda}^{3}\sum_{i=1}^{N-1}\big(1-\lambda^{2i}\big)^2\lambda^{2(N-i)}=2c_{2,\lambda}^{3}\sum_{i=1}^{N-1} \big( \lambda^{2(N-i)}+ \lambda^{2(N+i)} -2\lambda^{2N} \big)=-4c_{2,\lambda}^{3} \lambda^{2N}\big(N-1\big) \\ \nonumber & +2c_{2,\lambda}^{3}\big( 1+\lambda^{2N}\big) \sum_{i=1}^{N-1} \lambda^{2i}. 
\end{align}
Furthermore, consider the following term:

\begin{align}
 \nonumber & \sum_{i=1}^{N-1} \sum_{t=1}^{i-1}\lambda^{2(i-t)-1}\langle w_{t-1},e_{n}\rangle^{2}= \lambda^{-2}\sum_{t=1}^{N-2} \bigg( \sum_{l=1}^{N-t} \lambda^{2l}  \bigg)\langle w_{t-1}, e_{n} \rangle^{2},    
\end{align}
and 
\begin{align}
\nonumber & \mathbb{E}\Bigg[\lambda^{-2}\sum_{t=1}^{N-2} \bigg( \sum_{l=1}^{N-t} \lambda^{2l}  \bigg)\langle w_{t-1}, e_{n} \rangle^{2}\Bigg]^{2}=3\lambda^{-4}\sum_{t=1}^{N-2} \bigg( \sum_{l=1}^{N-t} \lambda^{2l}  \bigg)^{2}+ 2\lambda^{-4}\sum_{t=1}^{N-2}\sum_{t'>t}^{N-2} \sum_{l=1}^{N-t}   \sum_{l'=1}^{N-t'} \lambda^{2(l'+l)} \\ \nonumber & =3\lambda^{-4}c_{2,\lambda}^{4}\bigg[(N-2)+c_{4,\lambda}\lambda^{4}\big(\lambda^4-o_{\lambda;N}^{2}(1)\big)\bigg]+ 2\lambda^{-4}\sum_{t=1}^{N-2}\sum_{t'>t}^{N-2} \sum_{l=1}^{N-t}   \sum_{l'=1}^{N-t'} \lambda^{2(l'+l)},
\end{align}
where
\begin{align}
    \nonumber & 2\lambda^{-4}\sum_{t=1}^{N-2}\sum_{t'>t}^{N-2} \sum_{l=1}^{N-t}   \sum_{l'=1}^{N-t'} \lambda^{2(l'+l)}= 2c_{2,\lambda}\lambda^{-4}\sum_{t=1}^{N-2}\sum_{t'>t}^{N-2} \sum_{l=1}^{N-t} \lambda^{2l}(1+\lambda^{2(N-t'+1)}) \\ \nonumber &  2c_{2,\lambda}\lambda^{-4}\sum_{t=1}^{N-2}\sum_{t'>t}^{N-2} \big(\sum_{l=1}^{N-t} \lambda^{2l}+ \lambda^{2(N-t'+1)}\sum_{l=1}^{N-t} \lambda^{2l}\big)=2c_{2,\lambda}^{2}\lambda^{-4}\sum_{t=1}^{N-2}\sum_{t'>t}^{N-2} \big([1+\lambda^{2(N-t+1)}]+ \lambda^{2(N-t'+1)}[1+\lambda^{2(N-t+1)}]\big)\\ \nonumber & = c_{2,\lambda}^{2} \lambda^{-4} \Bigg[ \bigg( (N-2)+ \sum_{t=1}^{N-2} \lambda^{2(N-t+1)} \bigg)^2-\sum_{t=1}^{N-2} \big(1+\lambda^{2(N-t+1)}\big)^2  \Bigg]= c_{2,\lambda}^{2} \lambda^{-4}  \bigg( N-2+ c_{2,\lambda} \lambda^{6} \big(1+\lambda^{2(N-2)}\big) \bigg)^2 \\ \nonumber &  - c_{2,\lambda}^{2} \lambda^{-4} \bigg[ (N-2) +c_{4,\lambda}\lambda^{4(3)}\big(1+\lambda^{4(N-2)}\big)+2c_{2,\lambda} \lambda^{2(3)} \big( 1+\lambda^{2(N-2)}\big)  \bigg]\\ \nonumber &=c_{2,\lambda}^{2} \lambda^{-4} \Bigg[ (N-2)^{2} +c_{2,\lambda}^{2}\lambda^{2(6)}\big(1+\lambda^{2(N-2)}\big)^2-(N-2)-c_{4,\lambda}\lambda^{4(3)}\big(1+\lambda^{4(N-2)}\big)-2c_{2,\lambda} \lambda^{2(3)} \big( 1+\lambda^{2(N-2)}\big)  \Bigg] 
\end{align}
Now we are only left with the task of bounding 
\begin{align}
    \nonumber & \mathbb{E}\bigg(\sum_{i=1}^{N-1}\sum_{t=1}^{i}\lambda^{i-t} \langle w_{i-1}, e_{n}\rangle \langle w_{t-1},e_{n-1}\rangle \bigg)^{2}=\sum_{i=1}^{N-1}\mathbb{E}\Bigg[ \big(\sum_{t=1}^{i}\lambda^{i-t} \langle w_{i-1}, e_{n}\rangle \langle w_{t-1},e_{n-1}\rangle \big)^{2} \\ \nonumber & +2\sum_{i'>i}\big(\langle s_{i},e_{n-1} \rangle \langle w_{i-1}, e_{n}\rangle \big)\big(\langle \lambda^{(i'-i)}s_{i}+\sum_{t=i+1}^{i'}\lambda^{i'-t}w_{t-1}, e_{n-1}\rangle \langle w_{i'-1}, e_{n}\rangle\big) \Bigg]\\ \nonumber &=\sum_{i=1}^{N-1}\mathbb{E}\big(\langle \sum_{t=1}^{i}\lambda^{i-t} w_{t-1},e_{n-1}\rangle \langle w_{i-1}, e_{n}\rangle  \big)^{2}=\sum_{i=1}^{N-1}\mathbb{E}\big(\langle s_{i},e_{n-1}\rangle \langle w_{i-1}, e_{n}\rangle \big)^{2}\\ \nonumber &=\sum_{i=1}^{N-1}\mathbb{E}(\langle s_{i},e_{n-1}\rangle^2)= \sum_{i=1}^{N-1}\sum_{t=1}^{i} \lambda^{2(i-t)}=c_{2,\lambda}(N-1)-c_{2,\lambda}^{2}\big(1-o_{\lambda;N}(1) \big)  
\end{align}
\begin{theorem}
Standard deviation of $\langle y_{n-1},y_n \rangle$, $SD\big(\langle y_{n-1},y_n \rangle\big)= O(N-2)$ and there exists positive constants independent of $N$, $\kappa_{\lambda,1}$ and $\kappa_{\lambda,2}$ such that  
    \begin{equation}
        \langle y_{n-1},y_{n}\rangle \in \big[ \kappa_{\lambda,1}, 2N-\kappa_{\lambda,2} \big]
    \end{equation}
\end{theorem}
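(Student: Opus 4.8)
The natural route is to analyze $\langle y_{n-1},y_n\rangle$ through the three–term decomposition already extracted from the recursion \eqref{eq:recurrow}: write $\langle y_{n-1},y_n\rangle=T_1+T_2+T_3$ with
\[ T_1:=\sum_{i=1}^{N-1}\sum_{t=1}^{i-1}\lambda^{2(i-t)-1}\langle w_{t-1},e_{n}\rangle^{2},\quad T_2:=\sum_{i=1}^{N-1}\langle s_i,e_{n-1}\rangle\langle s_i,e_{n}\rangle,\quad T_3:=\sum_{i=1}^{N-1}\langle w_{i-1},e_{n}\rangle\!\sum_{t=1}^{i}\lambda^{i-t}\langle w_{t-1},e_{n-1}\rangle. \]
The steps are (i) read the mean off this decomposition; (ii) assemble the second moment from the block moment computations already performed in the display chain above, supplying the three cross–moments $\mathbb{E}T_aT_b$ that are still missing; (iii) subtract to obtain the variance; and (iv) feed mean and variance into Chebyshev's inequality (equivalently, the typical–size notion of the preliminaries) to confine the entry to the stated interval.

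\textbf{Mean.} Because the $e_{n-1}$–coordinate of every $s_i$ depends only on the noises $\langle w_{t-1},e_{n-1}\rangle$, which are centered and independent of everything indexed by $e_n$, one gets $\mathbb{E}T_2=\mathbb{E}T_3=0$, hence $\mathbb{E}\langle y_{n-1},y_n\rangle=\mathbb{E}T_1$; this is exactly the quantity evaluated in \eqref{eq:eynyn-1}, of the form $\lambda^{-2}c_{4,\lambda}\big[(N-1)-c_{4,\lambda}(1-o_{\lambda;N}^{2}(1))\big]=\Theta(N)$ with a strictly positive, explicit $\lambda$–dependent leading coefficient. Observe also that $T_1\ge 0$ almost surely, being a nonnegative linear combination of independent $\chi^2_1$ variables; this is what will ultimately force the lower endpoint $\kappa_{\lambda,1}>0$.

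\textbf{Variance.} Expanding $\langle y_{n-1},y_n\rangle^2=\sum_{a,b}T_aT_b$, the diagonal moments $\mathbb{E}T_1^2$, $\mathbb{E}T_2^2$, $\mathbb{E}T_3^2$ are precisely the three expressions already computed above (the $3\lambda^{-4}\sum_t(\cdot)^2+2\lambda^{-4}\sum_{t<t'}(\cdot)(\cdot)$ term, the $2c_{2,\lambda}^{3}[\cdots]$ term, and $c_{2,\lambda}(N-1)-c_{2,\lambda}^{2}(1-o_{\lambda;N}(1))$, respectively). For the cross moments, $\mathbb{E}T_1T_2=\mathbb{E}T_1T_3=0$: in $T_1T_2$ one factors each summand as $T_1\langle s_i,e_n\rangle$ times $\langle s_i,e_{n-1}\rangle$ and the latter expectation vanishes, and the same parity/independence argument kills $\mathbb{E}T_1T_3$. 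The one genuinely new computation is
\[ \mathbb{E}T_2T_3=\sum_{i,i'=1}^{N-1}\mathbb{E}\big[\langle s_i,e_{n-1}\rangle\langle s_{i'},e_{n-1}\rangle\big]\,\mathbb{E}\big[\langle s_i,e_{n}\rangle\langle w_{i'-1},e_{n}\rangle\big], \]
which I would evaluate with the shift identity $s_{i'}=\lambda^{i'-i}s_i+\sum_{t=i+1}^{i'}\lambda^{i'-t}w_{t-1}$: the first factor equals $\lambda^{|i-i'|}c_{2,\lambda}(1-\lambda^{2\min(i,i')})$ and the second equals $\lambda^{i-i'}\mathbf{1}\{i'\le i\}$, so the double sum collapses under the geometric decay in $|i-i'|$ to a single $\Theta(N)$ sum. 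Finally, subtracting $(\mathbb{E}T_1)^2$: the $\Theta(N^2)$ part of $\mathbb{E}T_1^2$ equals $(\mathbb{E}T_1)^2$ up to $O(N)$ by the identity $\mathbb{E}(\sum_t X_t)^2-(\mathbb{E}\sum_t X_t)^2=\sum_t\mathrm{Var}(X_t)$ for the independent summands of $T_1$, so it cancels against $(\mathbb{E}\langle y_{n-1},y_n\rangle)^2$ and every surviving contribution is $O(N)$. Hence $\mathrm{Var}(\langle y_{n-1},y_n\rangle)=O(N)\subseteq O((N-2)^2)$, i.e. $SD(\langle y_{n-1},y_n\rangle)=O(N-2)$.

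\textbf{Localization.} With $\mathbb{E}\langle y_{n-1},y_n\rangle=\Theta(N)$ and $\mathrm{Var}=O(N)$, Chebyshev / the typical–size estimate places $\langle y_{n-1},y_n\rangle$ within $O(\sqrt N)$ of its mean with probability $1-O(\delta^{-2})$; combining this with $T_1\ge 0$, $|T_2+T_3|=O(\sqrt N)\ll\mathbb{E}T_1$, and the a priori relations between rows $n-1$ and $n$ in \eqref{eq:recurrow} (which let one bound $\langle y_{n-1},y_n\rangle$ against the already–controlled diagonal entries $\langle y_{n-1},y_{n-1}\rangle$ and $\langle y_n,y_n\rangle$ via $2ab\le a^2+b^2$) confines the entry to $[\kappa_{\lambda,1},\,2N-\kappa_{\lambda,2}]$ for suitable positive $\lambda$–dependent constants. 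The main obstacle is in steps (iii)–(iv): obtaining $\mathbb{E}T_2T_3$ cleanly and, above all, verifying that the $\Theta(N^2)$ parts of $\mathbb{E}\langle y_{n-1},y_n\rangle^2$ and of $(\mathbb{E}\langle y_{n-1},y_n\rangle)^2$ cancel exactly, so that the fluctuations are genuinely of lower order than the $\Theta(N)$ mean — this is what makes the lower endpoint nontrivial and what exhibits the strong spatial correlation (an off–diagonal entry of order $N$ rather than $\sqrt N$) responsible for the transience of OLS.
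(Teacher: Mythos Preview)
Your proposal is correct and follows essentially the same route as the paper: the three–term decomposition $T_1+T_2+T_3$ coming from the recursion \eqref{eq:recurrow}, followed by the block moment computations already displayed before the theorem, and then Chebyshev/typical–size to localize. The paper's proof is just the one–line assembly of those displays into the variance expression and the observation that $\mathbb{E}\langle y_{n-1},y_n\rangle=O(N-1)$.

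One point worth noting: your accounting is actually tighter than the paper's. You correctly argue that $\mathbb{E}T_1T_2=\mathbb{E}T_1T_3=0$ by independence of the $e_{n-1}$ and $e_n$ noise coordinates, and that $\mathrm{Var}(T_1)=2\sum_t a_t^2=O(N)$ since the coefficients $a_t$ are uniformly bounded geometric sums; together with $\mathbb{E}T_2^2,\mathbb{E}T_3^2,\mathbb{E}T_2T_3=O(N)$ this gives $\mathrm{Var}\langle y_{n-1},y_n\rangle=O(N)$, i.e.\ $SD=O(\sqrt N)$, which of course implies the stated $SD=O(N-2)$. The paper's displayed variance formula omits the cross moments and leaves the $(N-2)^2$ term from $\mathbb{E}T_1^2$ un-cancelled against $(\mathbb{E}T_1)^2$, so it only extracts the coarser $SD=O(N)$; your cleaner cancellation buys a sharper fluctuation scale and makes the lower endpoint $\kappa_{\lambda,1}>0$ immediate rather than contingent on a comparison of leading constants.
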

\begin{proof}
Proof follows the general computational procedure, we know from \eqref{eq:eynyn-1} analytic expression of $\mathbb{E}\langle y_{n},y_{n-1}\rangle$
\begin{align}
        \nonumber & \mathbb{E}\big| \langle y_{n-1},y_{n}\rangle - \mathbb{E}\langle y_{n-1},y_{n}\rangle \big|^{2}=2c_{2,\lambda}^{3}\bigg[ (N-1)+c_{4,\lambda}\big(1-o_{\lambda;N}^{2}(1)\big)-2c_{2,\lambda}\big(1-o_{\lambda;N}(1)\big) \bigg] \\ \nonumber & + c_{2,\lambda}(N-1)-c_{2,\lambda}^{2}\big(1-o_{\lambda;N}(1) \big) -\lambda^{-4}c_{4,\lambda}^{2}\bigg[(N-1) -c_{4,\lambda}\big(1-o_{\lambda;N}^{2}(1) \big)\bigg]^{2} +4c_{2,\lambda}^{3} \lambda^{2N}\big(N-1\big) \\ \nonumber & -2c_{2,\lambda}^{4}\big( 1+\lambda^{2N}\big)^2 +3\lambda^{-4}c_{2,\lambda}^{4}\bigg[(N-2)+c_{4,\lambda}\lambda^{4}\big(\lambda^4-o_{\lambda;N}^{2}(1)\big)\bigg] \\ \nonumber & + c_{2,\lambda}^{2} \lambda^{-4} \Bigg[ (N-2)^{2} +c_{2,\lambda}^{2}\lambda^{2(6)}\big(1+\lambda^{2(N-2)}\big)^2-(N-2)-c_{4,\lambda}\lambda^{4(3)}\big(1+\lambda^{4(N-2)}\big)-2c_{2,\lambda} \lambda^{2(3)} \big( 1+\lambda^{2(N-2)}\big)  \Bigg].  
    \end{align}
    From \eqref{eq:eynyn-1} we know that $\mathbb{E}\langle y_n, y_{n-1}\rangle=O(N-1)$ and result follows.
\end{proof}
\begin{remark}
This is where things start getting tricky, compared to at most typical size of $O(\sqrt{N})$ in Hermitian case for off-diagonal terms from remark \ref{rmk:ykyksign},rows with typical size of $N \pm \sqrt{N}$ and polynomial in $N$ can have a typical size of correlation $\big(N-1\big) \pm O\big(N-2\big)$. As the higher rows will have higher typical size and off-diagonal terms can potentially be $O(N^{n-1})$, which we will study in future work.      
\end{remark}
\paragraph{Typical size of the rows}
As we will shortly see afterwards, in order to get typical order of the bulk eigenvalues, one will need typical order of all the rows, which brings us to:
\begin{proposition}
    Sequnce of random variables $\big\langle y_{j}, \hat{e}_{n-(j-1)} \big\rangle_{j \in [n]}$ are centered at $0$ and:
    \begin{equation}
        O\bigg( \sum_{k=j}^{n-(j-1)} \sum_{m=0}^{n-k} \binom{n-k}{m}^{2} \lambda^{2(n-k-m)} \bigg)^{\frac{1}{2}}
    \end{equation}
    
\end{proposition}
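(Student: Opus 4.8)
The plan is to treat $\langle y_j,\hat e_{n-(j-1)}\rangle$ as a single centered Gaussian and simply read off its variance from the explicit expansion \eqref{eq:inpdatswsscs}. Setting the column index to $i=n-(j-1)=n-j+1$, the first thing to check is that the cap $(i-t)\wedge(n-j)$ appearing in \eqref{eq:inpdatswsscs} is inert here: for every $t\ge 1$ we have $i-t=n-j+1-t\le n-j$, so
\begin{equation*}
\langle y_j,\hat e_{\,n-j+1}\rangle=\sum_{t=1}^{n-j+1}\sum_{m=0}^{n-j+1-t}\binom{n-j+1-t}{m}\lambda^{\,n-j+1-t-m}\,\langle w_{t-1},e_{m+j}\rangle .
\end{equation*}
This is a fixed linear combination of the i.i.d.\ standard normals $\langle w_{t-1},e_\ell\rangle$, so it is centered at $0$, which is the first assertion. (Incidentally, $i=n-j+1$ is exactly the largest column index for which the cap never fires, which is why this particular entry admits a clean closed form.)

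For the fluctuation size I would expand the square of the display above. For a fixed $t$ the vectors $\{e_{m+j}\}_m$ are distinct canonical basis vectors, so $\langle w_{t-1},e_{m+j}\rangle$ and $\langle w_{t-1},e_{m'+j}\rangle$ are independent for $m\ne m'$, and for distinct $t$ the $w_{t-1}$ are independent; hence every cross term has zero expectation and
\begin{equation*}
\mathbb{E}\,\langle y_j,\hat e_{\,n-j+1}\rangle^{2}=\sum_{t=1}^{n-j+1}\ \sum_{m=0}^{n-j+1-t}\binom{n-j+1-t}{m}^{2}\lambda^{\,2(n-j+1-t-m)} .
\end{equation*}
The substitution $n-k:=n-j+1-t$ (so $k=j$ at $t=1$ and $k=n$ at $t=n-j+1$) rewrites this as $\sum_{k}\sum_{m=0}^{n-k}\binom{n-k}{m}^{2}\lambda^{2(n-k-m)}$, which is the displayed quantity; I note in passing that the top of the $k$-sum should read $n$ rather than $n-(j-1)$, since at $j=1$ this recovers exactly the variance of $\langle y_1,\hat e_n\rangle$ computed inside the proof of Proposition \ref{lm:cov-ul-bd}. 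Thus $\langle y_j,\hat e_{\,n-j+1}\rangle$ has standard deviation equal to the square root of that sum.

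To upgrade this variance identity to the asserted \emph{typical size}, observe that $\langle y_j,\hat e_{\,n-j+1}\rangle$ is a \emph{linear} functional of the underlying Gaussian vector, hence Lipschitz with constant equal to the $\ell_2$-norm of its coefficient vector, which is precisely the standard deviation just computed. Theorem \ref{thm:dim_ind_tal} together with Remark \ref{rm: lipiid} then gives sub-Gaussian concentration of $\langle y_j,\hat e_{\,n-j+1}\rangle$ about $0$ at exactly that scale, which is what ``typical size'' means in the sense of the measure-concentration definition above.

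The probabilistic content is genuinely light here: mean zero is immediate and the variance is a finite deterministic sum with no correlations to untangle. The only steps that need care are the two pieces of index bookkeeping — verifying that the $(i-t)\wedge(n-j)$ cap is inert at $i=n-j+1$, and the re-indexing $n-k=n-j+1-t$ — and, if one wants the qualitative payoff (that for $\lambda>\tfrac12$ this typical size is exponential in $n$, periodically in $j$), the Stirling-type estimate of $\sum_k\sum_m\binom{n-k}{m}^2\lambda^{2(n-k-m)}$, done exactly as in the proof of Proposition \ref{lm:cov-ul-bd}. That estimate, not any concentration argument, is the main obstacle.
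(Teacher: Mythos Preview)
Your argument is correct and matches the paper's approach exactly: the paper does not give a formal proof either, but immediately below the proposition it writes out $\langle y_{j},\hat e_{i}\rangle$ via \eqref{eq:inpdatswsscs}, specializes to $j=2$ to get $\mathbb{E}\langle y_{2},\hat e_{n-1}\rangle^{2}=\sum_{k=2}^{n}\sum_{m=0}^{n-k}\binom{n-k}{m}^{2}\lambda^{2(n-k-m)}$, and then states the general pattern \eqref{eq:intrlacerow} by recursion --- precisely your variance computation with the re-indexing $k=j-1+t$. Your observation that the upper index of the $k$-sum should be $n$ rather than $n-(j-1)$ is also right, as the paper's own $j=2$ display confirms.
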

\begin{equation}
 \langle y_{j}, \hat{e}_{i} \rangle= \sum_{t=1}^{i} \sum_{m=0}^{(i-t) \land (n-j) } \binom{i-t}{m} \lambda^{i-t-m} \big\langle w_{t-1} ,e_{m+j} \big \rangle,   
\end{equation}
 
 \begin{equation}
 \langle y_{2},\hat{e}_{n-1}\rangle=\sum_{t=1}^{n-1}\sum_{m=0}^{n-1-t} \binom{n-1-t}{m} \lambda^{n-1-t-m} \langle w_{t-1}, e_{m+2}\rangle,    
 \end{equation}
is centered at 0, weighted sum of independent $\frac{n(n-1)}{2}$ standard normals with variance:
\begin{align}
    4^n \lambda^{2n}L_{\lambda,n}(2) \leq \mathbb{E}\big(\langle y_{2},\hat{e}_{n-1}\rangle^2\big)=\sum_{k=2}^{n}\sum_{m=0}^{n-k} \binom{n-k}{m}^{2} \lambda^{2(n-k-m)}\leq 4^n \lambda^{2n}U_{\lambda,n}(2),
\end{align}
and repeating the recursion we have for $k \in [0, \ldots, n-1]$,
\begin{equation}
\label{eq:intrlacerow}
4^{n}\lambda^{2n}L_{\lambda,n}(k+1)\leq \mathbb{E}(\langle y_{k+1},\hat{e}_{n-k}\rangle^2) \leq  4^{n}\lambda^{2n}U_{\lambda,n}(k+1)
\end{equation}
\subsection{Interlacing the eigenvalues of Sample Covariance matrix }
We will use Courant-Fischer for the distribution of eigenvalues corresponding to the sample covariance matrix, $\Sigma_n:=X_{-}X_{-}^{*}$ 
\begin{equation}
    \lambda_{1}(\Sigma_n) \geq \lambda_{2}(\Sigma_n) \geq \ldots \lambda_{n}(\Sigma_n)>0.
\end{equation}
\begin{gather*}
\Sigma_n:=
\begin{bmatrix}
     \langle y_{1},y_{1}\rangle &  \ldots & \langle y_{1},y_{k+1} \rangle  &\ldots& \langle y_1,y_{n}\rangle  \\ \vdots & \ldots & \vdots & \ldots & \vdots \\ \langle y_{k+1},y_{1}\rangle &  \ldots & \langle y_{k+1},y_{k+1} \rangle  &\ldots& \langle y_{k+1},y_{n}\rangle \\
     \vdots & \ldots & \vdots & \ldots & \vdots \\
     \langle y_{n},y_{1}\rangle & \ldots & \langle y_{n},y_{k+1} \rangle & \ldots &  \langle y_{n},y_{n}\rangle   
\end{bmatrix}
\end{gather*}
$\Sigma_{n-k}$ be the modified sample covariance matrix with first $k-$ columns and rows removed
\begin{gather*}
\Sigma_{n-k}:=
\begin{bmatrix}
     \langle y_{k+1},y_{k+1}\rangle &  \ldots & \langle y_{k+1},y_{n}\rangle  \\ \vdots & \ldots & \vdots \\
     \langle y_{n},y_{k+1}\rangle & \ldots &  \langle y_{n},y_{n}\rangle   
\end{bmatrix}
\end{gather*}
Now we have a task of comparing the eigenvalues of both matrices, letting $V_{i}$ denote $i-$ th dimensional subspace:
\begin{equation}
    \lambda_{i}(\Sigma_n):=\sup_{V_{i}\subset \mathbb{R}^{n}} \hspace{5pt} \inf_{y \in S^{n-1} \cap V_{i}}\big\| \Sigma_n y \big\|, \hspace{5pt} \textit{and} \hspace{5pt} \lambda_{i}\big(\Sigma_{n-k}\big):=\sup_{V_{i}\subset \mathbb{R}^{n-k}} \hspace{5pt} \inf_{z \in S^{n-k-1} \cap V_{i}}\big\| \Sigma_{n-k} z \big\|
\end{equation}
Not hard to realize that for $i=[1, \ldots, n-k]$
\begin{equation}
    \label{eq:ubdbulkintlace}
    \lambda_{i}(\Sigma_n) \geq \lambda_{i}(\Sigma_{n-k}) \geq \lambda_{k+i}(\Sigma_n). 
\end{equation}
Just to be hands on, fix $n=3,k=1$ and vary $i$ from $1$ to $2$ and notice:
\begin{align}
    \nonumber & \lambda_{1}(\Sigma_{3}) \geq \lambda_{1}(\Sigma_{2}) \geq \lambda_{2}(\Sigma_{3}) \geq \lambda_{2}(\Sigma_{2}) \geq \lambda_{3}(\Sigma_3), \hspace{2pt} \textit{Now for:} \hspace{2pt} n=4,k=1,i \in [3] \\ & \label{eq:pyaramid} \lambda_{1}(\Sigma_{4}) \geq \lambda_{1}(\Sigma_{3}) \geq \lambda_{2}(\Sigma_{4}) \geq \lambda_{2}(\Sigma_{3}) \geq \lambda_{3}(\Sigma_{4}) \geq \lambda_{3}(\Sigma_{3}) \geq \lambda_{4}(\Sigma_{4}), 
\end{align}
and one trivially notices that if dynamics follow S-w-SSCs and one increase the dimension, \textbf{Smallest eigenvalue:} will always be upper bounded by $\lambda_{1}(\Sigma_{1})=\langle y_n,y_n \rangle=\|y_{n}\|^{2}$, where recall from \eqref{eq:inpdatswsscs} that:
\begin{equation}
    \langle y_{n},\hat{e}_{i} \rangle=\sum_{t=1}^{i}\lambda^{i-t}\langle w_{t-1},e_{n}\rangle
\end{equation}
i.e., typical size of standard scalar ARMA trajectory of length $N$. Furthermore:  
\begin{theorem} [Upper bounding the smallest singular value for all stable dynamics] Least singular value of the data matrix is essential upper bounded by the typical size of the scalar stable, length $N$ ARMA trajectory. Ignoring constant that can potentially only depend on eigenvalue of $A$,
\begin{equation}
    \sigma_{n}^{2}(X_{-}) \in (0, N+\sqrt{N}], \hspace{5pt} \textit{w.p. 1}.
\end{equation}
\end{theorem}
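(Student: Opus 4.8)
\emph{Proof strategy.} The plan is to exhibit a single unit vector $v$ along which the action of $X_{-}^{*}$ collapses to a length-$N$ scalar stable ARMA trajectory, and then combine the variational characterization of the smallest eigenvalue with the concentration machinery of Theorem~\ref{thm:dim_ind_tal} and Remark~\ref{rm: lipiid}. Concretely, pick any eigenvalue $\lambda\in\sigma(A)$ (so $|\lambda|<1$ by stability) and let $v\in\mathcal{S}^{n-1}$ be an associated unit \emph{left}-eigenvector, i.e. $v^{*}A=\lambda v^{*}$ (equivalently $A^{*}v=\bar\lambda v$); such a $v$ always exists. Using the representation \eqref{eq:dynamgauss}, $x_{i}=\sum_{t=1}^{i}A^{i-t}w_{t-1}$, and $v^{*}A^{i-t}=\lambda^{i-t}v^{*}$, one gets
\begin{equation}
    \langle v,x_{i}\rangle=\sum_{t=1}^{i}\lambda^{i-t}u_{t-1},\qquad u_{t-1}:=v^{*}w_{t-1},
\end{equation}
and since $\|v\|=1$ the scalars $\{u_{t-1}\}$ are i.i.d.\ standard (complex) Gaussians, so $X_{-}^{*}v$ is \emph{exactly} the trajectory of the one-dimensional stable ARMA model \eqref{eq:1dimarma} with parameter $\lambda$, and $\|X_{-}^{*}v\|^{2}=\sum_{i=1}^{N-1}\big|\sum_{t=1}^{i}\lambda^{i-t}u_{t-1}\big|^{2}$. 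This is the same object that appeared in the S-w-SSCs discussion as $\|y_{n}\|^{2}$ (there $e_{n}$ is itself a left-eigenvector of $J_{n}(\rho)$), so the argument below specializes to, and slightly generalizes, the interlacing observation already made for $\Sigma_{1}$.

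Next I would control $\|X_{-}^{*}v\|^{2}$. Its mean is $\sum_{i=1}^{N-1}\sum_{t=1}^{i}|\lambda|^{2(i-t)}=\frac{(N-1)-\sum_{i=1}^{N-1}|\lambda|^{2i}}{1-|\lambda|^{2}}=c_{\lambda}(N-1)-O_{\lambda}(1)$ with $c_{\lambda}=\frac{1}{1-|\lambda|^{2}}$ depending only on $\lambda$. For the fluctuation, view $g(w_{0},\dots,w_{N-1}):=\|X_{-}^{*}v\|$; it is the Euclidean norm of a linear image of the noise, hence Lipschitz in the driving noise with constant $\|\Sigma_{N}^{1/2}\|$, where $\Sigma_{N}$ is the covariance of $\big(\langle v,x_{1}\rangle,\dots,\langle v,x_{N-1}\rangle\big)$. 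Because this covariance involves only the scalar $\lambda$ (entries $\bar\lambda^{\,i'-i}\sum_{t}|\lambda|^{2(i-t)}$), a Gershgorin/geometric-series estimate gives $\|\Sigma_{N}^{1/2}\|^{2}=\|\Sigma_{N}\|\le\frac{1}{(1-|\lambda|)^{2}}=O_{\lambda}(1)$, crucially independent of $N$ and $n$. Theorem~\ref{thm:dim_ind_tal} together with Remark~\ref{rm: lipiid} then force $g$ to concentrate around its mean (which is $O(\sqrt{N})$) with sub-Gaussian tails at the $O_{\lambda}(1)$ scale, so that, absorbing the $\lambda$-dependent constants, $\|X_{-}^{*}v\|^{2}\in\big(0,\,N+\sqrt{N}\big]$ with overwhelming probability (the typical-size sense of the statement); positivity is automatic because $X_{-}$ is a.s.\ of full rank under isotropic noise with $\rho(A)<1$.

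Finally, by Courant--Fischer,
\begin{equation}
    \sigma_{n}^{2}(X_{-})=\lambda_{n}\big(X_{-}X_{-}^{*}\big)=\min_{x\in\mathcal{S}^{n-1}}\|X_{-}^{*}x\|^{2}\le\|X_{-}^{*}v\|^{2},
\end{equation}
and combining with the previous step yields $\sigma_{n}^{2}(X_{-})\in\big(0,\,N+\sqrt{N}\big]$ (ignoring the eigenvalue-dependent constant), as claimed; in the S-w-SSCs case this reproduces the bound $\sigma_{n}^{2}(X_{-})\le\lambda_{1}(\Sigma_{1})=\|y_{n}\|^{2}$ obtained by interlacing.

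The one genuinely delicate point is the middle step: one must be sure that taking the direction $v$ to be a left-eigenvector is exactly what makes the relevant covariance --- and hence the Lipschitz constant of $\|X_{-}^{*}v\|$ --- depend only on $\lambda$ and not on the possibly badly conditioned, $n$-dependent similarity transform bringing $A$ to Jordan form; it is precisely this decoupling into a scalar recursion that keeps the curse of dimensionality seen for $\sigma_{1}$ from contaminating $\sigma_{n}$. A minor bookkeeping issue is that for complex $\lambda$ the driving noise $u_{t-1}=v^{*}w_{t-1}$ is complex Gaussian; one either works over $\mathbb{C}$ throughout or replaces $v$ by a unit vector in the real invariant subspace of the corresponding conjugate pair, which turns $\lambda^{i-t}$ into a bounded real $2\times2$ block and leaves all estimates intact.
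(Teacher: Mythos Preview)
Your proposal is correct and follows the same strategy as the paper: upper-bound $\sigma_n^{2}(X_{-})=\lambda_n(\Sigma_n)$ by the squared norm of $X_{-}^{*}$ along one direction on which the dynamics collapse to a scalar stable ARMA, then invoke the $N\pm O(\sqrt{N})$ concentration of that scalar object. The paper reaches the scalar recursion via Cauchy interlacing down to the $1\times1$ block $\Sigma_1=\langle y_n,y_n\rangle$ (relying on the Jordan-type coordinates set up earlier so that the last row is scalar ARMA) and cites the moment computation of Remark~\ref{rmk:ykyksign}; you reach it via Courant--Fischer with a unit left-eigenvector $v$ and use the Talagrand/Lipschitz machinery instead. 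These are the same argument --- in the S-w-SSCs case $e_n$ \emph{is} a left-eigenvector, as you observe --- with your version having the mild advantage of being basis-free for an arbitrary stable $A$, so it does not need the paper's earlier basis-change discussion.
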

\begin{proof}
From interlacing theorem we have that:
\begin{align}
    \nonumber  \mathbb{P}\big(\sigma_{n}^{2}(X_{-}) \geq \delta \big)=\mathbb{P}\big(&\lambda_{n}(\Sigma_{n}) \geq \delta \big) \leq \mathbb{P}\big(\lambda_1 (\Sigma_{1})\big)= \mathbb{P}\big(\|y_{n}\|^{2} \geq \delta\big) \\ \nonumber & \frac{\sigma_{n}^{2}(X_{-})}{N} \longrightarrow (0,1].
\end{align}
and as discussed in Remark \ref{rmk:ykyksign}, $\langle y_n, y_n \rangle \in [N-\sqrt{N}, N+\sqrt{N}]$ w.p 1 
\end{proof}
Fix a row of the data matrix $X_{-}$, 
in order to get lower bound and better estimates on the least singular value, one needs to quantify its' distance from the span of remaining rows of data matrix. As we can take conjugate transpose and write least singular value of the data matrix as:
\begin{equation}
    \sigma_{n}(X_{-}):= \inf_{a \in S^{n-1}} \|X_{-}^{*}a\|_{2}
\end{equation}
Leveraging upon the Hilbertian structure(orthogonal projections/inner products), given any subspace $V \subset \mathbb{R}^{n}$, we have that:
\begin{equation}
    \|X_{-}^{*}a\|_{2}^{2}= \|P_{V}(X_{-}^{*}a)\|_{2}^{2} + \|P_{V^{\perp}}(X_{-}^{*}a)\|_{2}^{2} \geq \|P_{V^{\perp}}(X_{-}^{*}a)\|_{2}^{2}. 
\end{equation}
Therefore,
\begin{align}
    \big\|X_{-}^{*}a\big\|_{2} \geq |a_{i}|d(y_i,n_i), \hspace{5pt} \textit{for all} \hspace{3pt} i \in [n].
\end{align}
Now if $a \in S^{n-1}$, then there exists an $i \in [n]$ such that $|a_{i}| \geq \frac{1}{\sqrt{n}}$. Subsequently,
\begin{align}
    P\bigg( \inf_{a \in S^{n-1}} \|X_{-}^{*}a\| \leq \delta  \bigg) =P\bigg( \exists \hspace{3pt} a \in S^{n-1}: \hspace{2pt} \|X_{-}^{*}a\| \leq \delta  \bigg) \leq  P\bigg( \exists \hspace{3pt} j \in [n]: \hspace{2pt} \frac{d(y_j,n_j)}{\sqrt{n}} \leq \delta  \bigg)
\end{align}
\begin{enumerate}
    \item when we have independent rows union bound is not terribly wasteful, so: 
    \begin{align}
        \mathbb{P}\big(\sigma_{n}(X_{-}) \leq \delta \big) \leq \sum_{j=1}^{n} \mathbb{P}\bigg(d(y_j,n_j)\leq  \delta \sqrt{n} \bigg)
    \end{align}
    Let $w_{j} \in S^{N-1}$ be orthogonal unit vector to subspace $n_j$(not necessarily unique), if the rows are independent then $w_{j}$ can be chosen independently of $y_{j}$. Therefore:
    \begin{equation}
        \langle w_{j},y_{j}\rangle =\langle w_{j},P_{n_{j}^{\perp}}y_{j}\rangle \leq \big\| P_{n_{j}^{\perp}}y_{j}\big\|=d(y_{j},n_{j}),
    \end{equation}
    where inequality follows from Cauchy-Schwarz. Consequently:
    \begin{equation}
        \mathbb{P}\bigg(d(y_j,n_j)\leq \delta\sqrt{n}\bigg) \leq \mathbb{P}\bigg(\langle w_{j},y_{j}\rangle\leq \delta\sqrt{n}\bigg), 
    \end{equation}
    where controlling the last term is commonly studied under the name of \emph{anti-concentration or small ball probability}.
    \item rows with strong correlation(S-w-SSC-s), then union bound will essentially conclude vacuous estimates so its' better to bound
    \begin{align}
        \mathbb{P}\big(\sigma_{n}(X_{-}) \leq \delta \big) \leq   \mathbb{P}\bigg(\min_{j \in [n]} d(y_j,n_j)\leq \delta \sqrt{n} \bigg)  
    \end{align}
\end{enumerate}
\textbf{Taking care of the bulk:} One strategy for upper bounding the bulk eigenvalues follows naturally from interlacing given in \eqref{eq:ubdbulkintlace}, i.e., for $k \in [n]$
\begin{equation}
    \lambda_{k}(\Sigma_{n}) \leq \lambda_{1}(\Sigma_{n-k+1}),
\end{equation}
when combined with typical size of all the rows from Theorem \eqref{eq:intrlacerow} translates into
\begin{equation}
\sqrt{4^{n}\lambda^{2n}L_{\lambda,n}(k+1) \bigg\lfloor \frac{N}{n-k+1} \bigg\rfloor} \leq  \sqrt{\lambda_{1}(\Sigma_{n-k+1})} \leq \sqrt{\big(N-n+\big) 4^{n}\lambda^{2n}U_{\lambda,n}(k+1)},
\end{equation}
and consequently a lower bound on estimation error:
 \begin{align}
     \nonumber \big\|A-\hat{A} \big\|_{F} \geq \sigma_{n}(EX_{-}^{*}) \sqrt{\sum_{j=1}^{n} \frac{1}{\lambda_{1}^{2}\big(\Sigma_{n-k+1}\big)}}. 
\end{align}
In fact, when we have typical size of all the entries of sample covariance matrix we can probably get even better estimates on the bulk via tools from Quadratic constrained optimization. For example, using min-max characterization of the eigenvalues:
\begin{equation}
    \lambda_{1}(\Sigma_{2})= \max{\big(\sqrt{\langle y_{n-1},y_{n-1}\rangle^{2}+ \langle y_{n-1},y_{n}\rangle^{2}},  \sqrt{\langle y_{n},y_{n}\rangle^{2}+ \langle y_{n-1},y_{n}\rangle^{2}}  \big)}
\end{equation}
So in the case of S-w-SSCs, $ \lambda_{1}(\Sigma_{2})= \langle y_{n-1},y_{n-1}\rangle^{2}+ \langle y_{n-1},y_{n}\rangle^{2}$ and
\begin{equation}
\lambda_{2}(\Sigma_{2})= \inf_{a \in S^{1}} \sqrt{(a_{1} \langle y_{n-1},y_{n-1} \rangle+ a_{2}\langle y_{n},y_{n-1}\rangle)^2+ (a_{1} \langle y_{n-1},y_{n}\rangle+ a_{2}\langle y_{n},y_{n}\rangle)^2}= \inf_{a \in S^{1}} \|\Sigma_{2}a\|.    
\end{equation}
which is a quadratic constrained optimization problem.
Again using the preceding argument:
\begin{align}
    \nonumber  \lambda_{1}^{2}(\Sigma_3) = & \big(\|y_{n-2}\|^{2}+ \langle y_{n-2},y_{n-1}\rangle^{2}+\langle y_{n-2}, y_{n}\rangle^{2}\big) \vee \big(\langle y_{n-2},y_{n-1}\rangle^{2}+ \|y_{n-1}\|^{2}+\langle y_{n-1},y_{n}\rangle^{2}\big) \\ \nonumber & \vee  \big(\|y_n\|^{2}+ \langle y_{n-2} y_{n}\rangle^{2}+\langle y_{n-1},y_{n}\rangle^2\big)
\end{align}
For S-w-SSCs case: guess $\lambda_{1}(\Sigma_{3})=\sqrt{\langle y_{n-2},y_{n-2}\rangle^{2}+ \langle y_{n-2},y_{n-1}\rangle^{2}+\langle y_{n-2},y_{n}\rangle^{2}}$, and 
\begin{align}
\nonumber \lambda_{2}^{2}(\Sigma_{3})= \inf_{a \in S^{1}}\bigg[ (a_{1} \langle y_{n-2},y_{n-2} \rangle+ a_{2}\langle y_{n-2},y_{n-1}\rangle)^2+& (a_{1} \langle y_{n-1},y_{n-2}\rangle+ a_{2}\langle y_{n-1},y_{n-1}\rangle)^2 \\ \nonumber & + (a_{1} \langle y_{n},y_{n-2}\rangle+ a_{2}\langle y_{n},y_{n-1}\rangle)^2\bigg],
\end{align}
which we may be able to explicitly compute using optimization principles and is a part of future plan of the authors' research.
\subsection{Soft edge statistics of the martingale transform}
Another evident quantity that appears in estimation error of least squares is edge spectral statistics of the commonly known martingale transform term $EX_{-}^{*}$. For a quick insight into the largest singular value of martingale transform assume $A=A^{*}$ with only one eigenvalue $\lambda$, so
\begin{gather*}
EX_{-}^{*}=\sum_{i=1}^{N-1}\sum_{t=1}^{i}
\begin{bmatrix}
     \langle w_{i},e_{1}\rangle \langle   w_{t-1},e_{1}\rangle& \ldots &  \langle w_{i},e_{1}\rangle \langle w_{t-1},e_{n}\rangle  \\ \vdots & \ddots & \vdots \\
     \langle w_{i},e_{n}\rangle \langle   w_{t-1},e_{1}\rangle & \ldots  &\langle w_{i},e_{n}\rangle \langle  w_{t-1},e_{n}\rangle   
\end{bmatrix}
\end{gather*}
now pick  $(1,0,\ldots,0)=:a \in S^{n-1}$ then , then denote $s_{i}:= \sum_{t=1}^{i}w_{t-1}$ and notice
\begin{equation}
    s_{i+m}=\sum_{t=1}^{i+m}w_{t-1}=\underbrace{\sum_{t=1}^{i}w_{t-1}}_{s_i}+\sum_{t=i+1}^{i+m}w_{t-1}
\end{equation}
\begin{align}
& \nonumber \mathbb{P}\bigg(\big\|EX_{-}^{*}a\big\|\geq \delta \bigg) \leq \delta^{-2} \sum_{j=1}^{n}\mathbb{E}\bigg(\sum_{i=1}^{N-1} \langle w_{i},e_{j}\rangle  \langle \underbrace{\sum_{t=1}^{i}  w_{t-1}}_{=:s_{i}},e_{1}\rangle \bigg)^{2} \\ & \nonumber \delta^{-2}\sum_{j=1}^{n} \sum_{i=1}^{N-1}\mathbb{E} (\langle w_{i},e_{j}\rangle \langle s_{i}, e_1 \rangle)^{2}+2\langle w_{i},e_{j}\rangle \langle s_{i}, e_1 \rangle \sum_{m=1}^{N-1-i} \langle w_{i+m},e_{j}\rangle \langle s_{i+m}, e_1 \rangle \\ & \nonumber \delta^{-2}\sum_{j=1}^{n} \sum_{i=1}^{N-1}\mathbb{E} \big[(\langle w_{i},e_{j}\rangle \langle s_{i}, e_1 \rangle)^{2}+2\langle w_{i},e_{j}\rangle \langle s_{i}, e_1 \rangle \sum_{m=1}^{N-1-i}(\langle w_{i+m},e_{j}\rangle \langle s_{i}, e_1 \rangle+\langle w_{i+m},e_{j}\rangle \langle \sum_{t=i+1}^{i+m}w_{t-1},e_{1} \rangle) \big] \\ & \nonumber= \delta^{-2}\sum_{j=1}^{n} \sum_{i=1}^{N-1}\mathbb{E} \big[(\langle w_{i},e_{j}\rangle \langle s_{i}, e_1 \rangle)^{2}+2\langle w_{i},e_{j}\rangle \langle s_{i}, e_1 \rangle \sum_{m=1}^{N-1-i}\langle w_{i+m},e_{j}\rangle \langle w_{i},e_{1}\rangle+ \langle w_{i+m},e_{j}\rangle \sum_{t'=i+2}^{i+m} \langle w_{t'-1},e_{1} \rangle \big] \\ & \nonumber= \delta^{-2} \sum_{j=1}^{n}\sum_{i=1}^{N-1} \mathbb{E} \big[ \langle w_{i},e_{j}\rangle \langle w_{i},e_{j}\rangle \langle s_{i}, e_1 \rangle \langle s_{i}, e_1 \rangle\big]\\ & \nonumber=\delta^{-2}\sum_{j=1}^{n}\sum_{i=1}^{N-1} \mathbb{E}[(\langle w_{i},e_{j}\rangle)^2]\mathbb{E}[(\langle s_{i},e_{1}\rangle)^{2}]= \delta^{-2}\sum_{j=1}^{n}\sum_{i=1}^{N-1}i=\frac{n(N-1)N}{\delta^2}.
\end{align}
Now one can trivially see that if $a=e_{j} \in S^{n-1}$, regardless of $j \in [n]$, $\|EX_{-}^{*}a\|=\frac{n(N-1)N}{\delta^2}$. Therefore, $\sigma_{1}(EX_{-}^{*})=O\big(\frac{n(N-1)N}{\delta^2}\big)$. Now for Hermitian stable case, with only eigenvalue $\lambda$,
\begin{align}
    & \nonumber \mathbb{E}\bigg(\sum_{i=1}^{N-1} \langle w_{i},e_{j}\rangle  \langle \underbrace{\sum_{t=1}^{i} \lambda^{i-t} w_{t-1}}_{=:s_{i}(\lambda)},e_{1}\rangle \bigg)^{2}=\sum_{j=1}^{n}\sum_{i=1}^{N-1}\mathbb{E}[\langle s_{i}(\lambda),e_{1}\rangle ^{2}]=\sum_{j=1}^{n}\sum_{i=1}^{N-1}\sum_{t=1}^{i} \lambda^{2(i-t)}\\ & \nonumber \textit{Therefore,} \hspace{72pt} \sigma_{1}(EX_{-}^{*})=O\bigg(\frac{n(N-\sum_{i=0}^{N-1}\lambda^{2i})}{1-\lambda^{2}} \bigg).
\end{align}
For the peculiar case of S-w-SSCs with $\lambda \in \big(\frac{1}{2},1\big)$, notice that
\begin{align}
    EX_{-}^{*}=\sum_{i=1}^{N-1}w_{i}x_{i}^{*}=EX_{-}^{*}=\sum_{i=1}^{N-1}w_{i}\bigg(\sum_{t=1}^{i}\sum_{m=0}^{(i-t)\land n}\binom{i-t}{m}N_{n}^{m} \lambda^{i-t-m} w_{t-1}\bigg)^{*}  
\end{align}
Now for $j,k \in [n]$
\begin{align}
     \nonumber [EX_{-}^{*}]_{j,k}&=\sum_{i=1}^{N-1}\sum_{t=1}^{i}\sum_{m=0}^{(i-t)\land n}\binom{i-t}{m}\lambda^{i-t-m}\big\langle w_{i}w_{t-1}^{*}N_{n}^{m^{*}}e_{k} ,e_{j} \big\rangle\\ \nonumber&=\sum_{i=1}^{N-1}\sum_{t=1}^{i}\sum_{m=0}^{(i-t)\land (n-k)}\binom{i-t}{m}\lambda^{i-t-m}\big\langle w_{i}w_{t-1}^{*}e_{k+m} ,e_{j} \big\rangle\\ \nonumber &  = \sum_{i=1}^{N-1}\sum_{t=1}^{i}\sum_{m=0}^{(i-t)\land (n-k)}\binom{i-t}{m}\lambda^{i-t-m}\langle w_{i},e_{j} \rangle \langle w_{t-1},e_{k+m} \rangle
\end{align}
Now in order to find the order of $\sigma_{1}(EX_{-}^{*})$, we need to compute
\begin{align}
    \nonumber& \hspace{18pt}  \max_{k \in [n]} \sum_{j=1}^{n} \mathbb{E}\Bigg( \sum_{i=1}^{N-1}\langle w_{i},e_{j} \rangle \sum_{t=1}^{i}\sum_{m=0}^{(i-t)\land (n-k)}\binom{i-t}{m}\lambda^{i-t-m} \langle w_{t-1},e_{k+m} \rangle\Bigg)^{2}\\  \nonumber&= \hspace{5pt}\max_{k \in [n]} \sum_{j=1}^{n}\sum_{i=1}^{N-1} \mathbb{E}\bigg(\sum_{t=1}^{i}\sum_{m=0}^{(i-t)\land (n-k)}\binom{i-t}{m}\lambda^{i-t-m} \langle w_{t-1},e_{k+m} \rangle\bigg)^2 \\  \label{eq:rowdensesimsig1} &= \hspace{5pt} \sum_{j=1}^{n}\sum_{i=1}^{N-1} \mathbb{E}\bigg(\sum_{t=1}^{i}\sum_{m=0}^{(i-t)\land (n-1)}\binom{i-t}{m}\lambda^{i-t-m} \langle w_{t-1},e_{1+m} \rangle\bigg)^2 = \sum_{j=1}^{n}\mathbb{E}\big\|y_1\big\|^{2}=n\mathbb{E}\|y_{1}\|^{2}, 
\end{align}
where \eqref{eq:rowdensesimsig1} follows along the similar argument in deducing $\sigma_{1}(X_{-})$ of $S-w-SSCs$; first row has the largest typical size. This bring us to a remarkable observation
\begin{theorem}
Largest singular value of martingale term is upper bounded by $\sqrt{n}$ times of largest singular value of the data matrix 
    \begin{align}
          & \nonumber  \sigma_{1}(EX_{-}^{*})\leq \sqrt{n}\sigma_{1}(X_{-}),   
    \end{align}    
\end{theorem}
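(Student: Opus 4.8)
The plan is to work from the variational characterization $\sigma_1(EX_-^*)=\sup_{a\in S^{n-1}}\|EX_-^* a\|$ and reduce, exactly as in Section \ref{sec:largest_Sing}, to controlling $\|EX_-^* a\|$ along a single near-optimal direction. Expanding in coordinates, $(EX_-^* a)_j=\sum_{i=1}^{N-1}\langle w_i,e_j\rangle\langle x_i,a\rangle$, so $\|EX_-^* a\|^2=\sum_{j=1}^{n}\big(\sum_{i}\langle w_i,e_j\rangle\langle x_i,a\rangle\big)^2$; the summands $\langle w_i,e_j\rangle\langle x_i,a\rangle$ form a martingale-difference sequence for the filtration generated by the noise, since $x_i=\sum_{t=1}^{i}A^{i-t}w_{t-1}$ depends only on $w_0,\dots,w_{i-1}$ and is therefore independent of $w_i$ — this is precisely why $EX_-^*$ is the ``martingale transform''.

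First I would record the second-moment identity. Expanding $\mathbb{E}(EX_-^* a)_j^2$, every cross term with $i<i'$ vanishes: $w_{i'}$ is independent of $w_0,\dots,w_{i'-1}$, hence of $\langle w_i,e_j\rangle$, $\langle x_i,a\rangle$ and $\langle x_{i'},a\rangle$, so the expectation factors through $\mathbb{E}\langle w_{i'},e_j\rangle=0$; on the diagonal terms $w_i\perp x_i$ lets the expectation split. This gives
\[
\mathbb{E}\|EX_-^* a\|^2=\sum_{j=1}^{n}\sum_{i=1}^{N-1}\mathbb{E}\langle w_i,e_j\rangle^2\,\mathbb{E}\langle x_i,a\rangle^2 = n\sum_{i=1}^{N-1}\mathbb{E}\langle x_i,a\rangle^2 = n\,\mathbb{E}\|X_-^* a\|^2,
\]
which is exactly the computation already carried out in the S-w-SSCs and Hermitian specializations displayed just before the statement (where the right-hand side was identified with $n\,\mathbb{E}\|y_1\|^2$ and with $n(N-\sum\lambda^{2i})/(1-\lambda^2)$ respectively). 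Since $\|X_-^* a\|^2=a^*(X_-X_-^*)a\le\sigma_1^2(X_-)$ for every $a\in S^{n-1}$, the identity yields $\mathbb{E}\|EX_-^* a\|^2\le n\,\mathbb{E}\sigma_1^2(X_-)$ for each fixed direction $a$.

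It remains to upgrade this per-direction $L^2$ bound to a bound on the operator norm in the typical-size sense. I would note that $\|EX_-^* a\|^2$ is a quadratic form in the Gaussian vector $(w_0,\dots,w_{N-1})$ with mean at most $n\,\mathbb{E}\sigma_1^2(X_-)$, hence has sub-exponential tails on that scale and concentrates at its typical size. Taking the supremum over $a$ is then done either (i) via an $\epsilon$-net union bound over $S^{n-1}$ as in \eqref{eq:typpoint}, checking that the metric-entropy factor $(3/\epsilon)^n$ does not inflate the estimate beyond constants, or (ii) — the route the earlier sections set up — by invoking the structural fact that for the extreme cases under study the maximising direction is, up to typical-size constants, the deterministic coordinate vector $e_{k^*}$ carrying the largest typical row norm, so the supremum is attained at a fixed $a$ and the interchange of $\sup_a$ with the expectation is harmless. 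Either way one obtains $\sigma_1^2(EX_-^*)\le n\,\sigma_1^2(X_-)$, i.e. $\sigma_1(EX_-^*)\le\sqrt{n}\,\sigma_1(X_-)$.

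The main obstacle is precisely this last step: $\mathbb{E}\|EX_-^* a\|^2=n\,\mathbb{E}\|X_-^* a\|^2$ is only a statement about a \emph{fixed} $a$, and one cannot naively exchange $\mathbb{E}\sup_a$ with $\sup_a\mathbb{E}$. One must justify that the net correction (route i) or the ``worst direction is a coordinate'' claim (route ii) is genuinely lossless — the latter being the content of the row-by-row typical-size analysis of Sections \ref{sec:largest_Sing}--\ref{sec:eigsmp}. I would also flag that the cheaper surrogate $\sigma_1(EX_-^*)\le\|EX_-^*\|_F$ is too lossy here: the analogous Frobenius identity is $\mathbb{E}\|EX_-^*\|_F^2=n\,\mathbb{E}\|X_-\|_F^2$, and bounding $\|X_-\|_F^2\le n\sigma_1^2(X_-)$ costs an extra factor $\sqrt{n}$, which is why the operator-norm/net argument is the one to pursue.
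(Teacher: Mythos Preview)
Your approach is correct at the level of ``typical size'' the paper works in, but it is not the route the paper takes, and the paper's route is considerably shorter. The paper does not compute second moments and then pass through an $\epsilon$-net; instead it uses a single pathwise (deterministic) inequality: by Courant--Fischer / submultiplicativity of the operator norm,
\[
\sigma_{1}(EX_{-}^{*})=\sup_{a\in S^{n-1}}\|E(X_{-}^{*}a)\|\le \sigma_{1}\big(E\big|_{\mathrm{Im}(X_{-}^{*})}\big)\,\sigma_{1}(X_{-}),
\]
and then observes that $E$ restricted to the $n$-dimensional subspace $\mathrm{Im}(X_{-}^{*})$ is, by rotational invariance of the isotropic Gaussian, essentially an $n\times n$ Gaussian ensemble, whose largest singular value is of order $\sqrt{n}$. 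This completely sidesteps the obstacle you correctly flag --- the interchange of $\sup_{a}$ with $\mathbb{E}$ --- because the supremum is handled by a deterministic inequality before any probabilistic input enters; the only random-matrix fact needed is the (standard) bound $\sigma_{1}$ of an $n\times n$ Gaussian ensemble $=O(\sqrt{n})$.

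What your approach buys is an exact second-moment identity $\mathbb{E}\|EX_{-}^{*}a\|^{2}=n\,\mathbb{E}\|X_{-}^{*}a\|^{2}$ valid for every $a$, which makes transparent why the factor is $\sqrt{n}$ and connects directly to the per-direction computations displayed just before the theorem. What the paper's approach buys is that no net argument, no concentration for quadratic forms, and no ``worst direction is a coordinate'' heuristic is needed: the factor $\sqrt{n}$ comes from a single, well-known random-matrix estimate applied after a one-line algebraic bound. Both arguments share the same residual subtlety --- the subspace $\mathrm{Im}(X_{-}^{*})$ is not independent of $E$ --- which the paper treats heuristically.
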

In fact this was first mentioned in \cite{naeem2023high} by using Courant-Fischer that $\sigma_{1}(EX_{-}^{*})\leq \sigma_{1}(E_{Im(X_{-}^{*})})\sigma_{1}(X_{-})$ where $E_{Im(X_{-}^{*})}$ is essentially $n \times n$ Gaussian ensemble, known to have largest singular value of order $\sqrt{n}$.
\section{Non-vanishing error for stable-spatially inseparable case via tensorization of Talagrands' inequality: Heurestics}
\label{sec:Tal}
Dimension+iteration independent tensorization of Talagrands' inequality would require sufficient independence between the covariates. In fact tensorization for stable ARMA processes had been of much interest to people working in the field of \emph{Functional Inequalities} in early $2000s$. \cite{djellout2004transportation} was able to show dimension+iteration independent tensorization under the assumption of $\|A\|_{2}<1$. \cite{blower2005concentration} concludes dimension+iteration independent tensorization under the assumption of spectral radius being strictly inside unit circle, which turns out to be incorrect in high dimensions.
\begin{definition}[Dimension + Iteration free Tensorization]
Let $(x_1,x_2,\ldots,x_{N}) \in \mathbb{R}^{n^{\otimes N}}$ be a length $N-$ trajectory of an $n-$ dimensional linear Gaussian as in \eqref{eq:LGS}, and $\mu_{n}^{N}:=Law(x_1,x_2, \ldots, x_{N})$ be its' distribution. We say that the process $(x_1,x_2,\ldots,x_{N})$ satisfies dimension+iteration independent Talagrands' inequality, if for all probability measures $\nu$ on $\mathbb{R}^{n^{\otimes N}}$ which are absolutely constant w.r.t $\mu_{n}^{N}$( $\nu << \mu_{n}^{N}$), we have that
\begin{equation}
    W_{1}^{d}\big(\mu_{n}^{N},\nu \big) \leq \sqrt{2CH \big(\nu|| \mu_{n}^{N}\big)},
\end{equation}
where $C$ is independent of $N$ and $n$.
\end{definition}

\textbf{Question:} Now given that dynamics are generated from stable $n-$ dimensional linear Gaussian (i.e., $\rho(A)<1$) can we conclude dimension+iteration independent tensorization of Talagrands' inequality?

The slickest way to conclusion is a realization that $(x_{1},x_{2}, \ldots,x_{N})$ is a function of the Gaussian ensemble $(w_{0},w_{1},\ldots,w_{N-1})$ as $x_{i}=\sum_{t=1}^{i}A^{i-t}w_{t-1}$. Let $F(w_{0},w_{1}, \ldots,w_{N-1}):=\sqrt{\sum_{i=1}^{N} \big\|x_{i}\big\|^{2}}$. As isotropic Gaussians satisfy dimension+iteration independent tensorization (Theorem \ref{thm:dim_ind_tal}), when combined with equivalent notion of Lipschitz concentration( Remark \ref{rm: lipiid}) implies for all $\delta>0$:
\begin{equation}
    \mathbb{P}\bigg(\big|F(w_0,w_1,\ldots,w_{N-1})- \mathbb{E}F(w_0,w_1,\ldots,w_{N-1})\big|> \delta \bigg) \leq 2 \exp{\bigg( -\frac{\delta^{2}}{2\|F\|_{L}^{2}} \bigg)},
\end{equation} 
but the map $(x_{1},x_{2}, \ldots,x_{N}) \mapsto \sqrt{\sum_{i=1}^{N} \big\|x_{i}\big\|^{2}}$ is one-Lipschitz in $\ell_{2}$ additive metric on $\mathbb{R}^{n^{\otimes N}}$ and Remark \ref{rm: lipiid} implies that $\mu_{n}^{N} \in T_{1}\big( \|F\|_{L}^{2}\big)$.
We first need to collect some estimates that will lead to conclusive answer to tensorization of Talagrands' inequality for stable $n-$ dimensional ARMA model.
\begin{proposition}
    \label{prop:frobdataSwSSCs}
    Frobenius norm of the data matrix populated by $n-$ dimensional S-w-SSCs is precisely:
    \begin{align}
        \big\|X_{-}\big\|_{F}^{2}= \sum_{i=1}^{N} \sum_{j=1}^{n} \sum_{s,t=1}^{i} \sum_{m=0}^{(i-t)\land (n-j)} \sum_{m'=0}^{(i-s)\land (n-j)}  \binom{i-t}{m} \binom{i-s}{m'} \lambda^{i-t-m} \overline{\lambda^{i-s-m'}} \langle w_{t-1},e_{j+m}\rangle \overline{\langle w_{s-1},e_{j+m'}\rangle}
    \end{align}
\end{proposition}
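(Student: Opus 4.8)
The statement is an exact, pathwise identity — no probability, no expectation, no cancellation is involved — so the plan is simply to expand $\|X_{-}\|_{F}^{2}$ coordinatewise and substitute the closed form already recorded in \eqref{eq:inpdatswsscs}. First I would reduce to the entries of $X_{-}$: since the $(j,i)$ entry of $X_{-}$ is $\langle x_{i},e_{j}\rangle=\langle y_{j},\hat{e}_{i}\rangle$,
\begin{equation}
\big\|X_{-}\big\|_{F}^{2}=\sum_{i=1}^{N}\big\|x_{i}\big\|^{2}=\sum_{i=1}^{N}\sum_{j=1}^{n}\big|\langle x_{i},e_{j}\rangle\big|^{2}=\sum_{i=1}^{N}\sum_{j=1}^{n}\big|\langle y_{j},\hat{e}_{i}\rangle\big|^{2},
\end{equation}
so everything hinges on an explicit formula for each $\langle y_{j},\hat{e}_{i}\rangle$.

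Next I would recall (or cite from \cite{naeem2023spectral}) the closed form. Conditioned on $x_{0}=0$ we have $x_{i}=\sum_{t=1}^{i}A^{i-t}w_{t-1}$ with $A=J_{n}(\lambda)=\lambda I_{n}+N_{n}$, where $N_{n}$ is the nilpotent superdiagonal shift, so $N_{n}e_{k}=e_{k-1}$ (and $N_{n}e_{1}=0$), equivalently $(N_{n}^{*})^{m}e_{j}=e_{j+m}$ for $j+m\le n$ and $0$ otherwise. Binomially expanding $(\lambda I_{n}+N_{n})^{i-t}=\sum_{m=0}^{i-t}\binom{i-t}{m}\lambda^{i-t-m}N_{n}^{m}$ and pairing with $e_{j}$ gives
\begin{equation}
\langle y_{j},\hat{e}_{i}\rangle=\sum_{t=1}^{i}\sum_{m=0}^{(i-t)\land(n-j)}\binom{i-t}{m}\lambda^{i-t-m}\langle w_{t-1},e_{m+j}\rangle,
\end{equation}
where the upper truncation $m\le n-j$ is forced precisely by $(N_{n}^{*})^{m}e_{j}=0$ once $j+m>n$. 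I would isolate this as a one-line lemma so the remainder is purely formal.

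Finally I would multiply $\langle y_{j},\hat{e}_{i}\rangle$ by its conjugate, introducing a second pair of summation indices $(s,m')$ for the conjugated copy, obtaining
\begin{equation}
\big|\langle y_{j},\hat{e}_{i}\rangle\big|^{2}=\sum_{s,t=1}^{i}\sum_{m=0}^{(i-t)\land(n-j)}\sum_{m'=0}^{(i-s)\land(n-j)}\binom{i-t}{m}\binom{i-s}{m'}\lambda^{i-t-m}\overline{\lambda^{i-s-m'}}\langle w_{t-1},e_{j+m}\rangle\overline{\langle w_{s-1},e_{j+m'}\rangle},
\end{equation}
and then summing this identity over $i\in[N]$ and $j\in[n]$; interchanging the (finitely many) sums puts the expression into exactly the claimed form. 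Note the contrast with Proposition~\ref{prop:ykyk}: there, taking $\mathbb{E}$ annihilates the cross terms and the quadruple sum collapses, whereas here we deliberately keep the raw random variable, since what is needed in the sequel is its Lipschitz constant in the $\ell_{2}$ additive metric.

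The only step that is not pure bookkeeping is the second one — correctly pinning down the adjoint shift action of $N_{n}$ and hence the truncation bounds $(i-t)\land(n-j)$ and $(i-s)\land(n-j)$. I expect that to be the main (and essentially the only) obstacle; once the closed form \eqref{eq:inpdatswsscs} is in hand, the proposition follows by inspection and a finite reindexing.
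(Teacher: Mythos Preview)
Your proposal is correct and is essentially the paper's own argument: both simply expand the Frobenius norm using the binomial formula for $(\lambda I_{n}+N_{n})^{i-t}$. The only cosmetic difference is that the paper first writes $\|x_{i}\|^{2}=\langle x_{i},x_{i}\rangle$, expands, and then resolves $\langle N_{n}^{m}w_{t-1},N_{n}^{m'}w_{s-1}\rangle$ into coordinates $j$ (requiring a final interchange of the $j$-sum with the $m,m'$-sums to recover the truncations $(i-t)\land(n-j)$, $(i-s)\land(n-j)$), whereas you split into coordinates up front and invoke \eqref{eq:inpdatswsscs} directly --- a slightly cleaner ordering that sidesteps the reindexing.
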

\begin{proof}
We again leverage upon the inner product structure to conclude. 
 \begin{align}
    & \nonumber \sum_{i=1}^{N}\big\| x_{i} \big\|^{2}= \sum_{i=1}^{N} \sum_{s,t=1}^{i} \sum_{m=0}^{(i-t)\land n} \sum_{m'=0}^{(i-s)\land n}\binom{i-t}{m} \binom{i-s}{m'} \lambda^{i-t-m} \overline{\lambda^{i-s-m'}} \big\langle N_{n}^{m} w_{t-1}, N_{n}^{m'} w_{s-1} \big \rangle \\ & \nonumber =   \sum_{i=1}^{N} \sum_{s,t=1}^{i} \sum_{m=0}^{(i-t)\land n} \sum_{m'=0}^{(i-s)\land n}\binom{i-t}{m} \binom{i-s}{m'} \lambda^{i-t-m} \overline{\lambda^{i-s-m'}}\sum_{j=1}^{ m \land m'} \langle w_{t-1},e_{j+m}\rangle \overline{\langle w_{s-1},e_{j+m'}\rangle} \\ & \nonumber = \sum_{i=1}^{N} \sum_{s,t=1}^{i} \sum_{m=0}^{(i-t)\land n} \sum_{m'=0}^{(i-s)\land n} \sum_{j=1}^{ m \land m'} \binom{i-t}{m} \binom{i-s}{m'} \lambda^{i-t-m} \overline{\lambda^{i-s-m'}} \langle w_{t-1},e_{j+m}\rangle \overline{\langle w_{s-1},e_{j+m'}\rangle} \\ & \nonumber= \sum_{i=1}^{N} \sum_{j=1}^{n} \sum_{s,t=1}^{i} \sum_{m=0}^{(i-t)\land (n-j)} \sum_{m'=0}^{(i-s)\land (n-j)}  \binom{i-t}{m} \binom{i-s}{m'} \lambda^{i-t-m} \overline{\lambda^{i-s-m'}} \langle w_{t-1},e_{j+m}\rangle \overline{\langle w_{s-1},e_{j+m'}\rangle}
\end{align}   
\end{proof}
\begin{theorem}
\label{thm:Tallindim}
$\|F\|_{L}$ is the smallest positive constant $L_{n,N}$ such that:
    \begin{equation}
        \big\|X_{-}\big\|_{F}=\sqrt{\sum_{i=1}^{N} \big\langle \sum_{t=1}^{i}A^{i-t}w_{t-1}, \sum_{s=1}^{i}A^{i-s}w_{s-1} \big \rangle} \leq L_{n,N} \sqrt{\sum_{i=0}^{N-1}\big\|w_{i}\big\|^{2}}=L_{n,N}\big\|E\big\|_{F},
    \end{equation}
in the limit of large number of iterations we have,    
\begin{itemize}
    \item Hermitian and stable case $L_{n,N}=\Theta(1)$: i.e, independent of dimension+iteration
    \item Stable but spatially inseparable case with $\lambda \in (\frac{1}{2},1)$, then  $L_{n,N}=\Theta(e^{n})$: i.e., linear in underlying dimension of the state space and independent of iterations.
\end{itemize}   

\end{theorem}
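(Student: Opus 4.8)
The plan is to observe that $F(w_0,\dots,w_{N-1})=\|X_-\|_F$ is the Euclidean norm of a \emph{linear} image of the ensemble. Stacking the trajectory, $(x_1,\dots,x_N)^{\top}=\mathcal L_N\,(w_0,\dots,w_{N-1})^{\top}$, where $\mathcal L_N$ is the block lower‑triangular Toeplitz matrix carrying $A^{k}$ on its $k$‑th block sub‑diagonal; thus $F(E)=\|\mathcal L_N E\|$, and from $\big|\,\|\mathcal L_N E\|-\|\mathcal L_N E'\|\,\big|\le\|\mathcal L_N(E-E')\|\le\|\mathcal L_N\|\,\|E-E'\|$ (with the reverse inequality obtained by taking $E'=0$) one gets $\|F\|_{L}=\|\mathcal L_N\|=\sigma_{\max}(\mathcal L_N)$, which is by definition the smallest $L_{n,N}$ in the claimed inequality. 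So the statement reduces to estimating the top singular value of $\mathcal L_N$ and its $N\to\infty$ limit. I will use two facts: (i) the block triangle inequality $\|\mathcal L_N\|\le\sum_{k=0}^{N-1}\|A^{k}\|$, since each block sub‑diagonal is a partial shift composed with $A^{k}$; and (ii) the ``impulse'' input $E=[v,0,\dots,0]$, for which $x_i=A^{i-1}v$ and hence $L_{n,N}^{2}\ge\sum_{k=0}^{N-1}\|A^{k}v\|^{2}$ for every unit vector $v$. It is also convenient that $L_{n,N}$ is nondecreasing in $N$ (each $\mathcal L_N$ is a principal submatrix of $\mathcal L_{N+1}$) and bounded since $\rho(A)<1$, so $\lim_N L_{n,N}$ exists and equals $\sup_{|z|=1}\|(I-Az)^{-1}\|$, the sup‑norm of the analytic operator symbol $\Phi(z)=\sum_{k\ge0}A^{k}z^{k}$.

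Hermitian case. Diagonalising $A=A^{*}$ gives $\|A^{k}\|=\rho(A)^{k}$, so $L_{n,N}\le\sum_{k\ge0}\rho(A)^{k}=(1-\rho(A))^{-1}$, a bound free of both $N$ and $n$, while the impulse input with $\|v\|=1$ already gives $L_{n,N}\ge\|v\|=1$. Hence $L_{n,N}=\Theta(1)$ (and $L_{n,N}\uparrow(1-|\lambda|)^{-1}$ if $A$ has the single eigenvalue $\lambda$).

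S-w-SSCs case. Write $A=\lambda I+N$ with $N$ the nilpotent backward shift, $\|N^{m}\|=1$ for $0\le m\le n-1$. Then $\|A^{k}\|\le\sum_{m=0}^{k\wedge(n-1)}\binom{k}{m}\lambda^{k-m}$, and summing over $k$, exchanging the order of summation, and using $\sum_{k\ge m}\binom{k}{m}\lambda^{k}=\lambda^{m}(1-\lambda)^{-(m+1)}$ gives the $N$‑free upper bound
\begin{equation}
L_{n,N}\ \le\ \sum_{k=0}^{N-1}\|A^{k}\|\ \le\ \sum_{m=0}^{n-1}\frac{1}{(1-\lambda)^{m+1}}\ =\ \frac{(1-\lambda)^{-n}-1}{\lambda},
\end{equation}
exponential in $n$; equivalently $\|(I-Az)^{-1}\|=\big\|(1-\lambda z)^{-1}\sum_{j=0}^{n-1}\big(\tfrac{z}{1-\lambda z}\big)^{j}N^{j}\big\|\le\sum_{j=0}^{n-1}|1-\lambda z|^{-(j+1)}\le\sum_{j=0}^{n-1}(1-\lambda)^{-(j+1)}$ on $|z|=1$. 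For the matching lower bound take $v=e_{n}$: since $A^{n-1}e_{n}=\sum_{m=0}^{n-1}\binom{n-1}{m}\lambda^{n-1-m}e_{n-m}$, we get $L_{n,N}^{2}\ge\|A^{n-1}e_{n}\|^{2}=\sum_{m=0}^{n-1}\binom{n-1}{m}^{2}\lambda^{2(n-1-m)}$ for $N\ge n$, which is exactly the $t=1$ contribution to $\mathbb E\langle y_1,\hat e_n\rangle^2$ in Proposition \ref{lm:cov-ul-bd}; keeping the central binomial term alone already yields $L_{n,N}^{2}\gtrsim(4\lambda)^{n-1}/n$, exponential in $n$ for $\lambda\in(1/2,1)$. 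Sharper still, the ``constant'' input $E=[e_{n},\dots,e_{n}]$ (for which $x_i=\sum_{s=0}^{i-1}A^{s}e_{n}\to(I-A)^{-1}e_{n}$) gives $\lim_N L_{n,N}\ge\|(I-A)^{-1}e_{n}\|=\big(\sum_{j=0}^{n-1}(1-\lambda)^{-2(j+1)}\big)^{1/2}\ge(1-\lambda)^{-n}$, so in the limit $L_{n,N}\in[(1-\lambda)^{-n},\lambda^{-1}(1-\lambda)^{-n}]$: exponential in $n$, with exponent a positive constant depending only on $\lambda$, and $N$‑independent.

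The step I expect to be the main obstacle is the S-w-SSCs lower bound, i.e.\ certifying that $L_{n,N}$ is \emph{genuinely} exponential in $n$ and not merely that the crude bound $\sum_k\|A^k\|$ is: the impulse reduction makes this concrete but needs the Stirling/Legendre‑type estimate of $\sum_m\binom{n-1}{m}^{2}\lambda^{2(n-1-m)}$, while the cleaner constant‑input route needs the standard‑but‑technical facts that the truncated Toeplitz norms increase to $\sup_{|z|=1}\|\Phi(z)\|$ and that, by a maximum‑modulus/positivity argument (since $A=\lambda I+N$ with $\lambda>0$), this supremum is attained at $z=1$.
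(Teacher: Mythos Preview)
Your argument is correct and takes a genuinely different route from the paper's. You recast $L_{n,N}$ as the operator norm of the deterministic block lower–triangular Toeplitz map $\mathcal L_N$ and then bound $\|\mathcal L_N\|$ directly: above by $\sum_{k\ge 0}\|A^k\|$ (which, after your summation identity, yields the clean $N$–free bound $L_{n,N}\le \lambda^{-1}\big((1-\lambda)^{-n}-1\big)$ in the Jordan case), and below by evaluating on well–chosen deterministic inputs (the impulse $v=e_n$, and more sharply the constant input, which pins down the matching rate $(1-\lambda)^{-n}$ in the limit). The paper instead argues probabilistically and heuristically: it treats $L_{n,N}$ as the ratio $\|X_-\|_F/\|E\|_F$ for the \emph{random} Gaussian ensemble and plugs in the earlier typical–size estimate $\sigma_1(X_-)=\Omega\big(\sqrt{\lfloor N/n\rfloor}\,e^{\alpha_\lambda n/2}\big)$ together with $\|E\|_F=\Theta(\sqrt{nN})$. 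That approach buys reuse of the paper's spectral results but, strictly speaking, only certifies a lower bound on $L_{n,N}$ (any input lower–bounds the supremum), and leaves the upper bound for S\,-w-\,SSCs at the heuristic level—which is consistent with the section's ``Heuristics'' label. Your deterministic route avoids this asymmetry, delivers explicit two–sided constants $L_{n,N}\in[(1-\lambda)^{-n},\lambda^{-1}(1-\lambda)^{-n}]$ in the limit, and does not require any probabilistic input. One small note: your impulse bound $L_{n,N}^2\gtrsim(4\lambda)^{n-1}/n$ already fires for $\lambda>1/4$, so it is not tight in the base of the exponential; it is your constant–input argument (equivalently, the Toeplitz–symbol identity $\lim_N L_{n,N}=\sup_{|z|=1}\|(I-Az)^{-1}\|$) that recovers the sharp rate, and the ``maximum at $z=1$'' step you flag is indeed the only place requiring care.
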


\begin{figure} [!t]

\begin{center}
\includegraphics[width=0.75\textwidth]{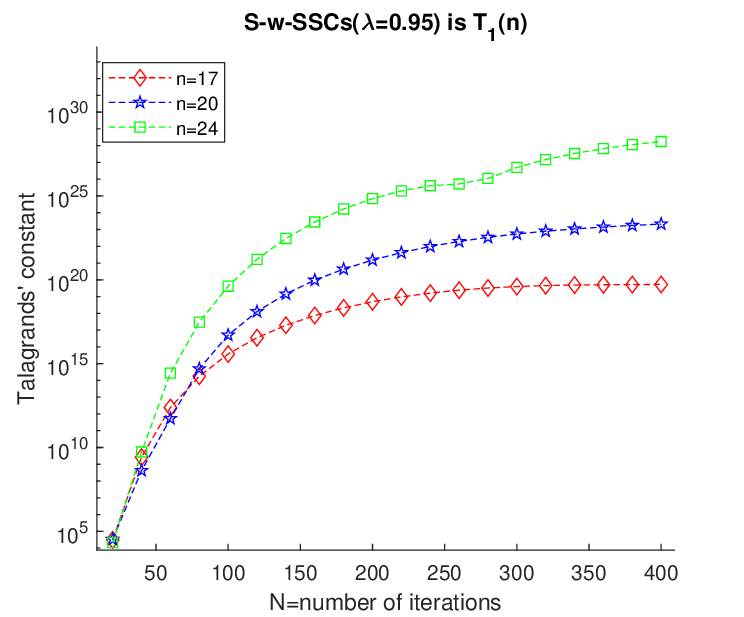}     
\caption{Tensorization is exponential in $n-$ and independent of $N$ for large $N$. Verified for $(n=17,20,24)$ S-w-SSCs(with $\lambda=0.95$) } 
\label{fig:Tal}
\end{center}                
\end{figure} 

\begin{proof} of 
 Hermitian and stable case is obvious after recalling spectral theorem(i.e., all the rows are essentially one-dimensional scalar stable ARMA). Let $[\lambda_{j}]_{j=1}^{n}$ be the eigenvalues of $A$ which are assumed to be strictly inside the unit circle.
\begin{align}
 & \nonumber \sum_{i=1}^{N-1} \sum_{j=1}^{n} \big| \sum_{t=1}^{i}\langle A^{i-t}w_{t-1},e_{j}\rangle \big|^{2}=\sum_{i=1}^{N-1} \sum_{j=1}^{n} \big| \sum_{t=1}^{i}\langle \lambda_{j}^{i-t}w_{t-1},e_{j}\rangle \big|^{2} \leq L_{n,N}^{2} \sum_{i=0}^{N-1} \|w_{i}\|^{2} \\ & \label{eq:talblowherm} \textit{where,} \hspace{5pt} L_{n,N} \leq \min_{\epsilon>0} \max_{j \in [n]} \sqrt{(1+\epsilon^{-1}) \sum_{i=0}^{N-1}\big( (1+\epsilon)|\lambda_{j}|^2\big)^{k}} \leq \frac{1}{1-\rho},     
\end{align}   
where proof of \eqref{eq:talblowherm} is given in Proposition 4.1 of \cite{blower2005concentration}.
However, most difficult but most interesting case is for trajectory generated from $n-$ dimensional $S-w-SSCs$ and $\lambda \in (\frac{1}{2},1)$: and an immediate daunting approach suggested by Prooposition \ref{prop:frobdataSwSSCs} would be a tight estimate on $L_{n,N}$ such that:
\begin{align}
    \nonumber \sum_{i=1}^{N} \sum_{j=1}^{n} \sum_{s,t=1}^{i} \sum_{m=0}^{(i-t)\land (n-j)} \sum_{m'=0}^{(i-s)\land (n-j)}  \binom{i-t}{m} \binom{i-s}{m'} &  \lambda^{i-t-m} \overline{\lambda^{i-s-m'}} \langle w_{t-1},e_{j+m}\rangle \overline{\langle w_{s-1},e_{j+m'}\rangle} \\ & \nonumber \leq L_{n,N}^{2} \sum_{i=0}^{N-1}\big\|w_{i}\big\|^{2}.
\end{align}
Recall from corollary \ref{thm:sig_1_exp_n} that $\sigma_{1}(X_{-})$ has a typical size exponential in $n$.Now recall that $\big\|E\big\|_{F}=\sqrt{\sum_{i=0}^{N-1}\big\|w_{i}\big\|^{2}}$ has a typical size of $\Theta\big(\sqrt{nN}\big)$. Notice that, for S-w-SSCs Talagrands' constant is $\Theta(e^{n})$, thus:
\begin{equation}
    L_{n,N}=\frac{\sqrt{\sum_{i=1}^{n} \sigma_{i}^{2}(X_{-})}}{\sqrt{\sum_{i=1}^{n} \sigma_{i}^{2}(E)}}= \frac{\sqrt{\sum_{i=2}^{n} \sigma_{i}^{2}(X_{-}) + \Theta(\sqrt{N-n+1}e^{n})^{2}}}{\Theta(\sqrt{nN})},    
\end{equation}
and the result follows.
\end{proof}
\begin{corollary}
    In high dimensions, OLS estimation on data corresponding to S-w-SSCs has non-vanishing error in Frobenius norm. 
\end{corollary}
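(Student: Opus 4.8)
The plan is to route the claim through the exact error identity $\big\|A-\hat A\big\|_{F}=\big\|EX_{-}^{*}(X_{-}X_{-}^{*})^{-1}\big\|_{F}$ of \eqref{eq:OLSerror}, diagonalize the right-hand side, and isolate the one term that Theorem \ref{thm:Tallindim} forbids from decaying. Writing $X_{-}X_{-}^{*}=\sum_{j=1}^{n}\sigma_{j}^{2}(X_{-})\,p_{j}p_{j}^{*}$ with $\{p_{j}\}$ orthonormal in $\mathbb{R}^{n}$ and $q_{j}:=\sigma_{j}^{-1}(X_{-})X_{-}^{*}p_{j}$ the matching orthonormal right singular vectors in $\mathbb{R}^{N}$, one obtains
\[
\big\|A-\hat A\big\|_{F}^{2}=\big\|EX_{-}^{*}(X_{-}X_{-}^{*})^{-1}\big\|_{F}^{2}=\sum_{j=1}^{n}\sigma_{j}^{-2}(X_{-})\,\big\|Eq_{j}\big\|^{2}\ \ge\ \frac{\big\|Eq_{n}\big\|^{2}}{\sigma_{n}^{2}(X_{-})}.
\]
Here $\sigma_{n}^{2}(X_{-})=\Theta(N)$: the upper bound $\sigma_{n}^{2}(X_{-})\le N+\sqrt N$ is the smallest-singular-value theorem proved above, and the matching lower order is elementary, since for every unit $v$, $\mathbb{E}\|X_{-}^{*}v\|^{2}=\sum_{i=1}^{N-1}\sum_{k=0}^{i-1}\|(A^{*})^{k}v\|^{2}\ge N-1$ from the $k=0$ terms, so $\lambda_{\min}(X_{-}X_{-}^{*})=\Omega(N)$. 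Thus everything reduces to the size of $\|Eq_{n}\|^{2}$.

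In an ordinary (independent) design one would have $\|Eq_{n}\|^{2}\approx n$, since $q_{n}$ is a unit vector independent of the regression noise; this yields the harmless vanishing rate $n^{2}/N$, exactly what happens in the Hermitian case where the rows of $X_{-}$ decouple. The obstruction for S-w-SSCs is that $X_{-}$ is itself a \emph{linear but exponentially amplifying} image of the very noise $E$ driving the regression: by Theorem \ref{thm:Tallindim} the map $E\mapsto X_{-}$ has Frobenius operator norm $L_{n,N}=\Theta(e^{n})$, \emph{independent of $N$}, and — $L_{n,N}$ being by definition the smallest such constant — there is a one-parameter family of noise directions (concentrated, by Proposition \ref{lm:cov-ul-bd}, Corollary \ref{thm:sig_1_exp_n} and the remark after them, on the first row $y_{1}$) along which $\|X_{-}\|_{F}$ is of order $e^{n}\|E\|_{F}$. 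Heuristically this same amplification forces $q_{n}$, the direction in which $X_{-}$ stores the \emph{least} energy, to correlate up to a constant with a direction of $\mathbb{R}^{N}$ along which $E$ carries anomalously large — $\Omega(N)$ rather than $\Theta(n)$ — energy, i.e. $\|Eq_{n}\|^{2}=\Omega(N)$. Granting this, $\|A-\hat A\|_{F}^{2}\ge \Omega(N)/\Theta(N)=\Omega(1)$, and because neither the constant in $\sigma_{n}^{2}(X_{-})=\Theta(N)$ nor $L_{n,N}$ decays in $N$, the lower bound persists — indeed its dependence on $n$ makes it grow in high dimensions — as the trajectory lengthens, which is the asserted transience/non-vanishing. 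Equivalently, through Section \ref{sec:Tal}: the only tensorization available for $\mu_{n}^{N}$ is $T_{1}\!\big(L_{n,N}^{2}\big)$ with $L_{n,N}^{2}=\Theta(e^{2n})$, so Remark \ref{rm: lipiid} certifies nothing better than $e^{n}$-size fluctuations for functionals of $E$, in sharp contrast to the $L_{n,N}=\Theta(1)$ Hermitian case where the error provably concentrates at its $O(n/\sqrt N)$ mean.

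The main obstacle — and the reason the statement is offered as a corollary of the heuristic Section \ref{sec:Tal} rather than proved in full — is exactly the step $\|Eq_{n}\|^{2}=\Omega(N)$: it asserts a correlation between the least-energy right singular direction of the design $X_{-}$ and a high-energy direction of the noise $E$, a self-normalized-martingale phenomenon invisible to the marginal laws of $X_{-}^{*}X_{-}$ and $E^{*}E$ taken separately. Making it rigorous needs their joint law, together with the convolutional description of the rows of $X_{-}$ in terms of the rows of $E$ from \eqref{eq:inpdatswsscs}; the recursive row-by-row framework already used for the entries of the sample covariance matrix in Section \ref{sec:eigsmp} is the natural tool, but carrying out the bookkeeping (and pinning down the precise range of $N$ for which transience holds) is exactly what is deferred to future work.
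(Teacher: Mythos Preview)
Your route and the paper's diverge at the very first inequality. You keep the \emph{smallest} singular direction and reduce to $\|Eq_{n}\|^{2}/\sigma_{n}^{2}(X_{-})$; the paper instead goes through the \emph{largest} one,
\[
\big\|A-\hat A\big\|_{F}=\big\|EX_{-}^{*}(X_{-}X_{-}^{*})^{-1}\big\|_{F}\ \ge\ \frac{\|EX_{-}^{*}\|_{F}}{\sigma_{1}^{2}(X_{-})}\ \ge\ \frac{\sigma_{n}(E)\,\|X_{-}\|_{F}}{\sigma_{1}^{2}(X_{-})},
\]
and then substitutes quantities already argued earlier in the section: $\sigma_{n}(E)\approx\sqrt N-\sqrt{n-1}$ (standard random-matrix fact), $\|X_{-}\|_{F}=L_{n,N}\|E\|_{F}$ with $L_{n,N}=\Theta(e^{n})$ from Theorem~\ref{thm:Tallindim}, and the exponential order of $\sigma_{1}^{2}(X_{-})$ from Corollary~\ref{thm:sig_1_exp_n}. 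The curse of dimensionality enters both numerator and denominator with the same exponent and cancels, leaving $(\sqrt N-\sqrt{n-1})\sqrt{nN}/(N-n+1)>0$. Crucially, no information about the \emph{joint} law of $E$ and the singular vectors of $X_{-}$ is ever invoked; everything is a marginal typical-size estimate.

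The step you flag as deferred, $\|Eq_{n}\|^{2}=\Omega(N)$, is not a technical detail but the entire content of your bound: $q_{n}$ is a unit vector in $\mathbb{R}^{N}$, so absent a specific alignment with a top singular direction of $E$ one gets $\|Eq_{n}\|^{2}=\Theta(n)$ and your estimate collapses to the harmless $n/N$. The heuristic you offer --- that exponential amplification forces $q_{n}$ onto a high-energy direction of $E$ --- is exactly the kind of self-normalized joint-law statement the paper's route was chosen to sidestep. A secondary gap: your ``elementary'' claim $\sigma_{n}^{2}(X_{-})=\Omega(N)$ is only the per-direction expectation $\mathbb{E}\|X_{-}^{*}v\|^{2}\ge N-1$; promoting this to $\inf_{v\in S^{n-1}}\|X_{-}^{*}v\|^{2}=\Omega(N)$ with high probability is a small-ball/anti-concentration step the paper explicitly leaves open (only the upper bound $\le N+\sqrt N$ is established).
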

\begin{proof}
 Using Courant-Fischer and the fact that $\sigma_{n}(E)=\sqrt{N}-\sqrt{n-1}$ with high probability(see e.g., \cite{rudelson2014recent}) we conclude:
\begin{align}
     & \nonumber \big\|A-\hat{A} \big\|_{F}= \big\|EX_{-}^{*}(X_{-}X_{-}^{*})^{-1} \big\|_{F} \geq \frac{\big\|EX_{-}^{*} \big\|_{F}}{\sigma_{1}^{2}(X_{-})} \geq \frac{\sigma_{n}(E) \|X_{-}\|_{F}}{\sigma_{1}^{2}(X_{-})}\\ & \nonumber \hspace{50pt} = \frac{(\sqrt{N}-\sqrt{n-1})\sqrt{nN}e^{n}}{(N-n+1)e^{n}}= \frac{(\sqrt{N}-\sqrt{n-1})\sqrt{nN}}{(N-n+1)}>0
\end{align}   
\end{proof}
\begin{figure} [!t]
\begin{center}
\includegraphics[width=0.75\textwidth]{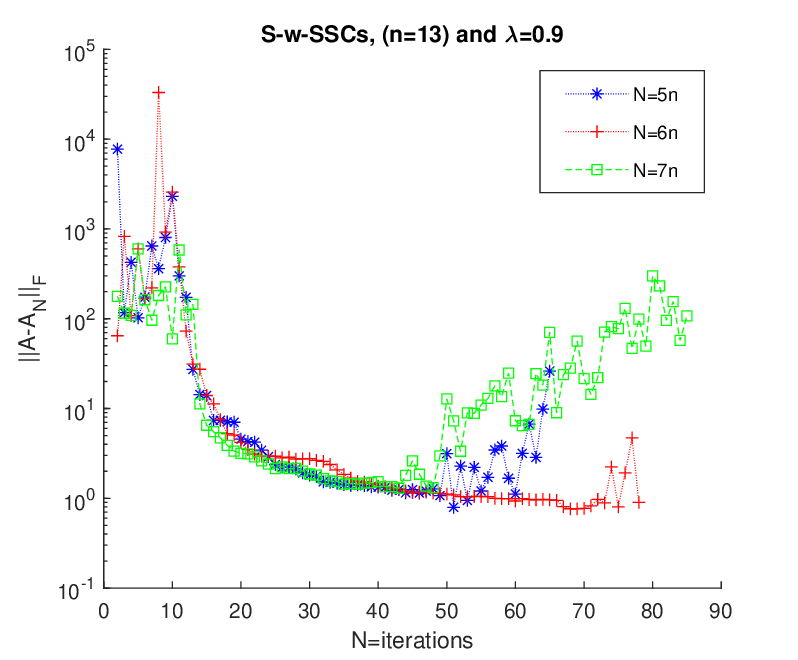}     
\caption{Estimation error worsening with increase in iterations} 
\label{fig:OLS_Sensitive_Iteration}
\end{center}                
\end{figure}
\section{Conclusion}
\label{sec:conc}
We have managed to make significant progress in giving typical order explicit in $N$ and $n$ of various spectral statistics that determine performance of OLS. Although OLS is transient in spatially inseparable case but their does exist a sweet spot on length of the simulated trajectory so that the error is relatively small. In order to find that regime we will need to extend the typical order analysis in section \ref{sec:spst} to higher typical sized rows.

\bibliography{example_paper}
\end{document}